\newcommand{\id}{\mathrm{id}}
\newcommand{\N}{\mathbb{N}}
\newcommand{\op}{\mathrm{op}}
\newcommand{\pt}{\mathrm{pt}}
\newcommand{\ev}{\mathrm{ev}}
\newcommand{\colim}{\mathop{\mathrm{colim}}}
\newcommand{\obj}{\mathop{\mathrm{ob}}}
\newcommand{\unitaries}{\mathop{\mathit{uni}}}
\newcommand{\ism}{\mathop{\mathit{ism}}}
\newcommand{\R}{\mathbb{R}}
\newcommand{\C}{\mathbb{C}}
\newcommand{\F}{\mathbb F}
\newcommand{\inthom}{\mathrm{\underline{Hom}}}
\newcommand{\hilb}{\mathsf{Hilb}}
\newcommand{\sSet}{\mathsf{sSet}}
\newcommand{\Gpd}{\mathsf{Gpd}}
\newcommand{\Cat}{\mathsf{Cat}}
\newcommand{\Cstar}{\mathrm{C}^*\!}
\newcommand{\Cstarmax}{\mathrm{C}^*_{\max}}
\newcommand{\Cstarism}{\mathrm{C}^*_{\mathrm{ism}}}
\newcommand{\Cstarcatun}{\mathsf{C}_1^*\!\mathsf{cat}}
\newcommand{\Cstaralgun}{\mathsf{C}_1^*\!\mathsf{alg}}
\newcommand{\Weq}{\mathrm{Weq}}
\newcommand{\Fib}{\mathrm{Fib}}
\newcommand{\Cof}{\mathrm{Cof}}
\theoremstyle{hhadefinition}
\newtheorem{examples}[theorem]{Examples}
\newtheorem*{conventions*}{Conventions}
\theoremstyle{hhaplain}
\newtheorem{thm}[theorem]{Theorem}
\newtheorem{thm-defi}[theorem]{Theorem-Definition}
\newtheorem{prop}[theorem]{Proposition}
\newtheorem{cor}[theorem]{Corollary}
\newtheorem*{thm*}{Theorem}
\newtheorem*{lemma*}{Lemma}
\newtheorem*{cor*}{Corollary}
\theoremstyle{hharemark}
\newtheorem{construction}[theorem]{Construction}
\newtheorem{terminology}[theorem]{Terminology}
\newtheorem*{remark*}{Remark}
\begin{document}
\title{The unitary symmetric monoidal model category of small C*-categories}
\shorttitle{The unitary model of C*-categories}
\author{Ivo Dell'Ambrogio}
\email{Ivo.DellAmbrogio@math.univ-lille1.fr}
\address{%
Laboratoire Paul Painlev\'e,
Universit\'e de Lille 1,
Cit\'e Scientifique - B\^atiment M2,
F-59655 Villeneuve d'Ascq Cedex,
France}
\thanks{Research supported by the Stefano Franscini Fund, Swiss National Science Foundation grant Nr.~PBEZP2-125724.}
\classification{
46M15, 
46L05, 
55U35. 
}

\keywords{$\Cstar$-category, universal construction, model category.}

\begin{abstract}
We produce a cofibrantly generated simplicial symmetric monoidal model structure for the category of (small unital) $\Cstar$-categories, whose weak equivalences are the unitary equivalences. The closed monoidal structure consists of the maximal tensor product, which generalizes that of $\Cstar$-algebras, together with the Ghez-Lima-Roberts $\Cstar$-categories of $*$-functors, $\Cstar(A,B)$, providing the internal Hom's. 
\end{abstract}

\received{March 14, 2011}   
\revised{June 13, 2012}    
\published{Month Day, Year}  
\submitted{Charles A. Weibel}  
\volumeyear{2012} 
\volumenumber{14} 
\issuenumber{2}   
\startpage{1}     

\maketitle

\vspace{1em}
\begin{center}
{\it To Tamaz Kandelaki.}
\end{center}


\section{Introduction}

In the same way that $\Cstar$-algebras provide an axiomatization of precisely the norm-closed and $*$-closed subalgebras $A\subseteq \mathcal L(H)$ of bounded operators on Hilbert space, $\Cstar$-categories axiomatize the norm-closed and $*$-closed sub\emph{categories} of the category of all Hilbert spaces and bounded operators between them. An important abstract example is the collection of all $*$-homomorphisms between any two fixed $\Cstar$-algebras (or, more generally, of all $*$-functors between two $\Cstar$-categories) $A$ and~$B$, which forms a $\Cstar$-category $\Cstar(A,B)$ in a natural way. 
Another example is the $\Cstar$-category of Hilbert modules over a fixed $\Cstar$-algebra. $\Cstar$-categories occur naturally in the detailed study of analytic assembly maps and 
they provide functorial versions of various constructions in coarse geometry 
(see the numerous references provided in~\cite{mitchener:Cstar_cats}).
Most notably, perhaps, $\Cstar$-categories (with additional structure) feature prominently in Doplicher-Roberts duality of compact groups~\cite{doplicher-roberts:new_duality}.
 References to other domains of applications are given in~\cite{glr,vasselli:bundlesII}. At the end of this introduction, and in view of our own `homotopical' results, we shall advocate an entire new course of application.

The abstract theory of $\Cstar$-categories was initiated in the article~\cite{glr} by Ghez, Lima and Roberts, who were mainly concerned with W*-categories (generalizing von Neumann algebras and comprizing the representation $\Cstar$-categories $\Cstar(A,\hilb)$). It was later picked up by Mitchener~\cite{mitchener:Cstar_cats}, who described useful basic constructions such as the minimal tensor product $\otimes_{\min}$ and the reduced and maximal groupoid $\Cstar$-categories $\mathrm C^*_r\mathcal G$ and $\Cstarmax \mathcal G$ associated to every discrete groupoid~$\mathcal G$; 
by Kandelaki~\cite{kandelaki:multiplier}, who has introduced multiplier $\Cstar$-categories and has used them to characterize $\Cstar$-categories of countably generated Hilbert modules; 
and by Vasselli~\cite{vasselli:bundles, vasselli:bundlesII}, who also studies multiplier $\Cstar$-categories as well as bundles of $\Cstar$-categories in order to extend Doplicher-Roberts duality to groupoids.
All three \Changed{latter} authors have worked with non-unital $\Cstar$-categories, which appear as the kernels of $*$-functors. 

In the present article, we extend the basic theory by generalizing some other constructions from the world of algebras to that of categories. 
For reasons that will become apparent, we limit ourselves to unital $\Cstar$-categories (\emph{i.e.}, those with identity arrows for all objects) and units preserving functors.
We begin with some recollections (Section~1), and proceed (Section~2) with a concise, but precise, treatment of so-called ``universal constructions'', which include as special cases all small colimits in the category of small $\Cstar$-categories and Mitchener's maximal groupoid $\Cstar$-categories. Another, apparently new, example is provided by the maximal tensor product $A \otimes_{\max} B$ of two $\Cstar$-categories. 
Its theoretical importance is testified by the following result (see Theorem~\ref{thm:closed_monoidal_Cstarcatun}):

\begin{thm*}
The maximal tensor product $\otimes_{\max}$ defines a closed symmetric monoidal structure on the category $\Cstarcatun$ of small (unital) $\Cstar$-categories and (units preserving) $*$-functors, whose internal Hom objects are precisely the $\Cstar(A,B)$.
\end{thm*}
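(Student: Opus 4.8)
The plan is to treat the formal monoidal bookkeeping and the closed (adjunction) part separately, the latter being where the real work lies. For the monoidal structure I would take the unit object to be $\unit$, the one-object $\Cstar$-category with endomorphism algebra $\C$, and derive the associativity, unit and symmetry isomorphisms directly from the universal property of the maximal tensor product furnished by section~2. Because each iterated tensor product solves a universal problem --- it represents ``multimorphisms'' out of its factors --- the comparison isomorphisms, together with the pentagon, triangle and hexagon coherences, follow formally from uniqueness of representing objects, exactly as for modules over a commutative ring or for $\Cstar$-algebras. This step is pure bookkeeping and I would dispatch it quickly.

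The heart of the matter is to produce a bijection, natural in all three variables,
\[
\Hom_{\Cstarcatun}(A \otimes_{\max} B,\, C) \;\cong\; \Hom_{\Cstarcatun}\big(A,\, \Cstar(B,C)\big),
\]
exhibiting $\Cstar(B,-)$ as right adjoint to $(-)\otimes_{\max} B$. I would factor it through the notion of a \emph{bimorphism}: an assignment $F$ sending each pair of objects $(a,b)$ to an object $F(a,b)$ of $C$ and each pair of morphisms to a morphism, which is separately a unital $*$-functor in each variable and whose two partial actions commute,
\[
F(f,\id_{b'}) \circ F(\id_a, g) \;=\; F(\id_{a'}, g) \circ F(f, \id_b)
\qquad (f\colon a\to a',\ g\colon b\to b').
\]
I would first recast the universal property of $\otimes_{\max}$ from section~2 as the statement that unital $*$-functors $A \otimes_{\max} B \to C$ are in natural bijection with such bimorphisms, the elementary tensor $f\otimes g$ being sent to the common value above. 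I would then \emph{curry}: to a bimorphism $F$ associate $\Phi\colon A \to \Cstar(B,C)$ with $\Phi(a) = F(a,-)$ on objects and $\Phi(f) = (F(f,\id_b))_{b\in\obj B}$ on a morphism $f\colon a\to a'$. The commutation displayed above is precisely the naturality of the family $\Phi(f)$, so $\Phi(f)$ is a natural transformation $F(a,-)\Rightarrow F(a',-)$; the inverse sends $\Phi$ back to the bimorphism $F(a,b)=\Phi(a)(b)$, $F(f,\id)=\Phi(f)$, $F(\id_a,g)=\Phi(a)(g)$. Checking that $\Phi$ is a unital $*$-functor and that the two passages are mutually inverse is then a direct computation with the Ghez--Lima--Roberts $*$- and norm-structure on $\Cstar(B,C)$.

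The main obstacle is analytic: matching the $\Cstar$-norms across the completions that define both $\otimes_{\max}$ and $\Cstar(B,C)$. In the currying direction I must verify that $\Phi(f)$ is genuinely a morphism of $\Cstar(B,C)$, i.e.\ a \emph{bounded} natural transformation. This is where I would use the standard fact (section~1) that every $*$-functor is norm-nonincreasing: applying it to the partial functors $F(-,b)$ gives the uniform estimate $\sup_{b}\|F(f,\id_b)\| \le \|f\| < \infty$, so $\Phi(f)$ has finite norm and $\Phi$ is itself norm-nonincreasing, as a $*$-functor must be. In the opposite direction, passing from a bimorphism to a $*$-functor on the \emph{completed} maximal tensor product demands no estimate at all: the induced representation of the algebraic tensor $*$-category is bounded for the maximal norm by the very definition of that norm as the supremum over all $*$-representations, so it extends to the completion automatically. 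Once these norm compatibilities are in place, naturality in the three variables and the triangle identities relating the adjunction to the unit and the symmetry are routine, and the symmetric adjunction $A\otimes_{\max}(-) \dashv \Cstar(A,-)$ follows by the symmetry isomorphism; the only place demanding genuine care, beyond bookkeeping, is ensuring at each step that the algebraically defined maps respect the $\Cstar$-norms and hence descend to completions.
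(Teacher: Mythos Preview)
Your proposal is correct and follows essentially the same route as the paper: both arguments curry a $*$-functor $F$ out of $A\otimes_{\max}B$ via the partial functors $F(1_x\otimes -)$ and $F(-\otimes 1_y)$, obtain the crucial boundedness of the resulting natural transformations from the fact that the latter are norm-decreasing $*$-functors, and handle the reverse direction by the universal extension property of $\otimes_{\max}$. The only cosmetic differences are that the paper organizes the intermediate step as restricting the exponential law for $\F$-categories and $*$-categories (rather than your equivalent language of bimorphisms), and that the unit object is $\F\in\{\R,\C\}$ rather than $\C$ specifically.
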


We observe (Theorem~\ref{thm:max_products}):
\begin{thm*}
The maximal groupoid $\Cstar$-category is a symmetric monoidal functor
\[
\Cstarmax(\mathcal G_1 \times \mathcal G_2)\simeq \Cstarmax (\mathcal G_1) \otimes_{\max} \Cstarmax (\mathcal G_2)
\]
from discrete groupoids to $\Cstar$-categories.
\end{thm*}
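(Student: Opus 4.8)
The plan is to identify both sides of the claimed isomorphism as corepresenting the same $\mathsf{Set}$-valued functor on $\Cstarcatun$, and then to promote the resulting natural isomorphism to a symmetric monoidal structure by a formal argument. The starting point is the universal property of $\Cstarmax$: being one of the universal constructions of section~2, it is left adjoint to the functor $\unitaries\colon\Cstarcatun\to\Gpd$ sending a $\Cstar$-category $B$ to the groupoid with the same objects and with the unitaries of $B$ as arrows (this is functorial because $*$-functors preserve units and unitaries). Thus for every discrete groupoid $\mathcal G$ there is a bijection $\Hom_{\Cstarcatun}(\Cstarmax(\mathcal G),B)\cong\Hom_{\Gpd}(\mathcal G,\unitaries(B))$, natural in $\mathcal G$ and $B$.

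First I would compute $\Hom_{\Cstarcatun}(\Cstarmax(\mathcal G_1)\otimes_{\max}\Cstarmax(\mathcal G_2),B)$ by combining this adjunction with the tensor--Hom adjunction of theorem~\ref{thm:closed_monoidal_Cstarcatun}:
\[
\Hom\bigl(\Cstarmax(\mathcal G_1)\otimes_{\max}\Cstarmax(\mathcal G_2),B\bigr)
\cong\Hom\bigl(\Cstarmax(\mathcal G_1),\Cstar(\Cstarmax(\mathcal G_2),B)\bigr)
\cong\Hom_{\Gpd}\bigl(\mathcal G_1,\unitaries\,\Cstar(\Cstarmax(\mathcal G_2),B)\bigr).
\]
The crucial step, which I expect to be the main obstacle, is the identification of the groupoid $\unitaries\,\Cstar(\Cstarmax(\mathcal G_2),B)$. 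Its objects are the $*$-functors $\Cstarmax(\mathcal G_2)\to B$, that is, by the adjunction again, functors $\mathcal G_2\to\unitaries(B)$; its arrows are the unitary elements of $\Cstar(\Cstarmax(\mathcal G_2),B)$, which are exactly the bounded natural transformations all of whose components are unitaries, hence the morphisms of the functor groupoid $[\mathcal G_2,\unitaries(B)]$. Checking carefully that a unitary natural transformation is precisely an arrow of $[\mathcal G_2,\unitaries(B)]$ (boundedness being automatic, since unitaries have norm one), and that the identification is natural in $\mathcal G_2$ and $B$, yields $\unitaries\,\Cstar(\Cstarmax(\mathcal G_2),B)\cong[\mathcal G_2,\unitaries(B)]$.

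Granting this, the Cartesian closedness of $\Gpd$ completes the corepresentability computation:
\[
\Hom_{\Gpd}\bigl(\mathcal G_1,[\mathcal G_2,\unitaries(B)]\bigr)
\cong\Hom_{\Gpd}\bigl(\mathcal G_1\times\mathcal G_2,\unitaries(B)\bigr)
\cong\Hom_{\Cstarcatun}\bigl(\Cstarmax(\mathcal G_1\times\mathcal G_2),B\bigr),
\]
so that Yoneda produces an isomorphism $\Cstarmax(\mathcal G_1)\otimes_{\max}\Cstarmax(\mathcal G_2)\cong\Cstarmax(\mathcal G_1\times\mathcal G_2)$, natural in both $\mathcal G_1$ and $\mathcal G_2$ since every bijection above is.

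Finally I would upgrade this to a strong symmetric monoidal functor. The unit constraint is an isomorphism $\C\cong\Cstarmax(\pt)$ between the monoidal unit of $\otimes_{\max}$ (the one-object $\Cstar$-category on $\C$) and the value of $\Cstarmax$ on the terminal groupoid $\pt$; it holds because both corepresent $B\mapsto\obj(B)$. The associativity, unit and symmetry coherence diagrams then commute automatically: each consists of maps between corepresenting objects induced by the corresponding coherence isomorphism for $\times$ in $\Gpd$ and for $\otimes_{\max}$ in $\Cstarcatun$, so they agree by the uniqueness half of Yoneda. The only genuine labour is therefore the naturality bookkeeping in the displayed bijections, which becomes routine once the identification of $\unitaries\,\Cstar(\Cstarmax(\mathcal G_2),B)$ is in hand.
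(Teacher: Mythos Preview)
Your proposal is correct and follows essentially the same route as the paper. The paper isolates your ``crucial step'' as a separate lemma showing $\unitaries\,\Cstar(\Cstarmax\mathcal G,A)\simeq\underline{\mathrm{Hom}}(\mathcal G,\unitaries A)$ (with the same linearity-and-continuity check you flag), then chains the same adjunctions you list, applies Yoneda, and dispatches the coherence diagrams by uniqueness of maps out of universal objects---just as you do. One cosmetic slip: the paper works over $\F\in\{\R,\C\}$, so the unit constraint should read $\F\simeq\Cstarmax(1)$ rather than $\C\simeq\Cstarmax(\pt)$.
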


Our main results are to be found in Section~3, where we uncover some remarkable features of $\Cstar$-categories. We collect them here in the following omnibus theorem, 
comprising Theorem~\ref{thm:first_model}, Proposition~\ref{prop:cof_gen} and Theorems~\ref{thm:simpl_struct} and~\ref{thm:algebra}.

\begin{thm*}
The category $\Cstarcatun$ of small $\Cstar$-categories and $*$-functors admits the structure of a cofibrantly generated simplicial model category, where:
\begin{enumerate}
\item Weak equivalences are the unitary equivalences, namely those $*$-functors\break $F\colon A\to B$ that are equivalences of the underlying categories, such that the isomorphisms $FF^{-1}\simeq \id_B$ and $F^{-1}F\simeq \id_A$ can be chosen with unitary components. \textup(In fact, as it turns out, the latter property is automatic; see Lemma~\ref{lemma:un_eq}.\textup)
\item The cofibrations are the $*$-functors that are injective on objects.
\item The fibrations are the $*$-functors $F\colon A\to B$ that allow the lifting of unitary isomorphisms $Fx \simeq y$.
\end{enumerate}
Moreover, the above \emph{unitary model structure} is compatible with the maximal tensor product~$\otimes_{\max}$, that is, together they endow $\Cstarcatun$ with the structure of a symmetric monoidal model category. In fact, the previous theorem shows that $\Cstarcatun$ is a symmetric $\sSet$-algebra in the sense of Hovey~\cite{hovey:model}.
\end{thm*}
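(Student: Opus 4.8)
The plan is to produce the model structure by a single application of the Quillen--Kan recognition theorem for cofibrantly generated model categories (e.g.\ \cite{hovey:model}). The category $\Cstarcatun$ is bicomplete: all small colimits are the universal constructions of section~2, while limits are built objectwise (products and equalizers of the underlying object-sets, with the evident C*-structure on morphisms). That the unitary equivalences satisfy two-out-of-three and are closed under retracts is a purely categorical verification. I would then write down explicit generating sets: for $J$ a single map $\unit\to\mathbb I$, where $\unit$ is the one-object C*-category on $\C$ and $\mathbb I$ is the two-object ``interval'' C*-category with a single unitary between its objects, so that right lifting against $J$ is exactly the lifting of unitary isomorphisms, i.e.\ the fibrations; and for $I$ the inclusion $\emptyset\to\unit$ together with object-injective maps of finitely generated C*-categories detecting fullness and faithfulness, so that $I$-injective means surjective on objects and fully faithful, i.e.\ a trivial fibration. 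Computing these right lifting properties, and checking that trivial fibrations are unitary equivalences and fibrations, is routine; the remaining inclusion $\Weq\cap I\textrm{-cof}\subseteq J\textrm{-cof}$ of the recognition theorem follows once the small object argument is available.

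The main obstacle is verifying that $I$ and $J$ permit the small object argument, i.e.\ smallness of the domains of the generators. This is genuinely delicate because $\Cstarcatun$ is not locally presentable: filtered colimits involve norm completions, so a priori even a finitely generated C*-category need not be small, a morphism in a colimit being only a norm limit of morphisms from earlier stages. I expect to resolve this by a careful analysis of the universal constructions of section~2, showing that the structure maps of the relevant cellular towers are isometric embeddings (equivalently, that the amalgamated free products computing the pushouts of the generators embed their factors). Granting this, every generator domain, being finitely generated and hence separable, is $\aleph_1$-small relative to such towers: along a regular tower of uncountable cofinality the countably many generators of the domain are each captured at some stage, hence, by regularity, at a common stage. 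This isometry/injectivity input for the universal constructions is the real heart of the argument. By contrast the acyclicity condition $J\textrm{-cell}\subseteq\Weq$ is easy: a pushout of $\unit\to\mathbb I$ adjoins a new object together with a unitary isomorphism to an old one, which is manifestly a unitary equivalence, and the same persists through transfinite composites.

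For the monoidal compatibility I would verify the pushout--product and unit axioms for $\otimes_{\max}$. On objects $\otimes_{\max}$ is the cartesian product (theorem~\ref{thm:closed_monoidal_Cstarcatun}), so the pushout--product of two object-injective functors is again object-injective---this is just the pushout--product of injections of sets---which settles the cofibration half. For acyclicity it suffices that $A\otimes_{\max}(-)$ preserves unitary equivalences, which follows from functoriality of $\otimes_{\max}$ together with the explicit unitary adjoint-inverse data defining such an equivalence; combined with a standard gluing argument (for which left properness, itself visible already at the level of objects, suffices) this yields that the pushout--product with an acyclic cofibration is acyclic. The unit axiom is automatic, the monoidal unit $\unit$ being cofibrant since $\emptyset\to\unit$ is a generating cofibration.

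Finally, the simplicial structure---the assertion that $\Cstarcatun$ is a symmetric $\sSet$-algebra---I would obtain from Hovey's criterion \cite{hovey:model}: a symmetric monoidal model category equipped with a symmetric monoidal left Quillen functor from $\sSet$ is automatically a simplicial model category. Such a functor is at hand as the composite
\[
L\colon \sSet \xrightarrow{\ \Pi_1\ } \Gpd \xrightarrow{\ \Cstarmax\ } \Cstarcatun,
\]
where $\Pi_1$ is the fundamental-groupoid functor, which preserves finite products up to natural isomorphism, and $\Cstarmax$ is symmetric monoidal by theorem~\ref{thm:max_products}; hence $L$ is symmetric monoidal for $(\times,\otimes_{\max})$. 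It remains only to see that $L$ is left Quillen, which I would check on generators: $\Pi_1$ sends boundary inclusions to object-injective functors (isomorphisms for $n\ge 2$) and horn inclusions to equivalences of groupoids that are injective on objects, and $\Cstarmax$ carries these to cofibrations and to acyclic cofibrations respectively. This completes the identification of $\Cstarcatun$ as a simplicial symmetric monoidal model category.
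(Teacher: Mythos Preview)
Your overall strategy via Hovey's recognition theorem is sound and is a legitimate alternative to the paper's approach, but there is one genuine problem in the plan and one place where you diverge from the paper that is worth noting.

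\textbf{The smallness argument.} Your proposed resolution---showing that the transition maps in $I$-cell towers are isometric embeddings---will not work as stated. Pushouts of the generator $W\colon P\to\mathbf 1$ (which, in the paper's notation, identifies two parallel arrows) are \emph{quotient} maps, not embeddings, so the maps in an $I$-cell tower are not injective in general. Fortunately the correct argument is both simpler and stronger: any $*$-functor between $\Cstar$-categories factors as a surjection onto a quotient $\Cstar$-category followed by a faithful (hence isometric) inclusion, so the image of each $B_\beta$ in $\colim_\alpha B_\alpha$ is a closed sub-$\Cstar$-category. Thus if $\lambda$ has uncountable cofinality and $Fa\in B$ is the norm limit of a countable sequence drawn from the various $B_{\alpha_n}$, setting $\beta=\sup_n\alpha_n<\lambda$ one finds the sequence lies in the closed image of $B_\beta$, whence so does $Fa$. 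This shows $P$ is $\aleph_1$-small in $\Cstarcatun$ outright, not merely relative to $I$-cell, and no analysis of amalgamated free products is needed. This is exactly what the paper does in its lemma~\ref{lemma:small}.

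\textbf{A small omission.} Your claim that $\Weq\cap I\text{-cof}\subseteq J\text{-cof}$ ``follows once the small object argument is available'' is correct only via the retract argument, which in turn requires the reverse inclusion $\Weq\cap J\text{-inj}\subseteq I\text{-inj}$: a unitary equivalence that is a fibration is surjective on objects (lift a unitary $Fx\simeq y$ to get $y$ in the image) and fully faithful. This is the content of the paper's corollary~\ref{cor:char_tr_fib}; it is easy but should be stated.

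\textbf{Comparison with the paper.} The paper does \emph{not} use the recognition theorem. It verifies MC1--MC5 directly, constructing the two functorial factorisations explicitly as mapping-path and mapping-cylinder objects
\[
A\;\to\;\Cstar(\mathbf I,B)\times_B A\;\to\;B
\qquad\text{and}\qquad
A\;\to\;(A\otimes_{\max}\mathbf I)\sqcup_A B\;\to\;B,
\]
and only afterwards exhibits the generating sets $I=\{U,V,W\}$ and $J=\{0\colon\F\to\mathbf I\}$ and checks smallness. Your route trades these concrete factorisations for the transfinite ones produced by the small object argument; the paper's are more explicit and independently useful, but either establishes the model structure. Your treatment of the monoidal and simplicial structure (pushout-product on object sets, $A\otimes_{\max}(-)$ preserving unitary equivalences, and the monoidal left Quillen functor $\sSet\xrightarrow{\Pi}\Gpd\xrightarrow{\Cstarmax}\Cstarcatun$) matches the paper's proposition~\ref{prop:monoidal_model}, proposition~\ref{prop:Quillen_adjunction} and theorem~\ref{thm:algebra} essentially verbatim.
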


We should note that the simplicial structure is defined via the nice formulas
\[
A\otimes K := A\otimes_{\max} \pi K
\quad, \quad
A^K := \Cstar(\pi K, A)
\quad\textrm{ and } \quad
\mathrm{Map}(A,B):= \nu \Cstar(A,B)
\]
 (for $A,B\in \Cstarcatun$ and $K \in \sSet$), where $\pi K= \Cstarmax(\Pi K)$ is the groupoid $\Cstar$-category of the fundamental groupoid of~$K$, and $\nu$ denotes the simplicial nerve of the subcategory of unitary isomorphisms.
 
The whole structure is intimately connected to the canonical model on the category of small categories (or small groupoids) of homotopists' folklore, where weak equivalences are the categorical equivalences in the usual sense.  In the last subsection of our article we explain how these models are related to each other. 

We now briefly suggest why the operator algebraists should care.

The hard-won experience of representation theorists and algebraic geometers has shown that a good way of studying certain invariants of rings, dg-algebras, or schemes, such as $K$-theory and Hochschild and cyclic (co)cohomology, is to proceed as follows (see~\cite{keller_dg}): 
First, substitute your object (ring, scheme,~\ldots) with a suitable \emph{small dg-category} of representations. The setting of dg-categories is convenient in part because it is closed under many constructions, such as taking tensor products, functor categories, localizations~\ldots (only the latter works well for, say,  triangulated categories). 
Second, factor your invariant through a suitable localization of the category of small dg-categories, by inverting classes of  morphisms (quasi-equivalences, Morita equivalences) that
induce isomorphisms of the invariants in question. 
In order to retain control on the result, one should realize this localization as the homotopy category of a suitable model structure on dg-categories. Another reason dg-categories are convenient is that they allow such models. The resulting localization is now a unified convenient setting where the powerful methods of modern homotopy theory can be applied to the collective study of the invariants.

We propose that a similar strategy could be useful for enhancing the study of invariants of $\Cstar$-algebras (groupoids,~\ldots) such as (equivariant and bivariant) $K$-theory. We have provided here the first pieces of the puzzle: the category of small $\Cstar$-categories is closed under numerous constructions, and carries a nice model structure for what is perhaps the strongest and most natural notion of equivalence after that of isomorphism, namely, unitary equivalence. The next logical step should be to investigate Morita(-Rieffel) equivalence by providing a suitably localized model structure, and to identify convenient small $\Cstar$-categories of representations  (Hilbert modules, \ldots) that should stand for the $\Cstar$-algebra. 

If this vision can be carried out to some extent, we prophesy bountiful applications.


 \begin{conventions*}
The base field will be denoted by~$\F$, and it is either the field $\R$ of real numbers or $\C$ of complex numbers. In this article, all categories (included $\Cstar$-categories) have identity arrows $1_x$ for all objects~$x$, and all functors (included $*$-functors) are required to preserve the identities (but \emph{cf.\ }Remark~\ref{rem:non_unital}).
\end{conventions*}

\ack
I am grateful to Mart Abel, Theo B\"uhler, Tamaz Kandelaki, Ralf Meyer, Ansgar Schneider and Gon\c calo Tabuada for their interest and helpful remarks. In addition, I would like to thank the anonymous referee for some useful comments.


\section{Recollections on $\Cstar$-categories}

\subsection{Definitions and examples} 

An \emph{$\F$-category} is a category enriched over $\F$-vector spaces. Concretely, an $\F$-category consists of a category $A$ whose Hom sets $A(x,y)$ have the structure of $\F$-vector spaces, and whose composition law consists of linear maps $A(y,z)\otimes_{\F} A(x,y) \to A(x,z)$.

A \emph{$*$-category} $ A$ is an $\F$-category equipped with an \emph{involution}, by which we mean an involutive antilinear contravariant endofunctor which is the identity on objects. In detail,  an involution consists of a collection of maps ${A}(x,y)\to{A}(y,x)$ for all objects $x,y\in \obj A$, each denoted by $a \mapsto a^*$, satisfying the identities 
 \begin{itemize}
 \item[(i)] $(za+wb)^*=\overline z a^*+\overline wb^*$ for $z,w\in \F$ and parallel arrows $a,b\in A$,
 \item[(ii)] $(ba)^*=a^*b^*$ for $a$ and $b$ composable,
 \item[(iii)]  ${(a^*)}^*=a$ for all arrows $a$.
 \end{itemize}
A \emph{$*$-functor} $F\colon A\to B$ between $*$-categories is an $\F$-linear functor  commuting with the involution: 
$F(a^*)=F(a)^*$.  In the following we shall occasionally work in the purely algebraic category of small $*$-categories and $*$-functors. 

A \emph{\textup(semi-\textup)normed category} is an $\F$-category whose Hom spaces  $A(x,y)$ are (semi-) normed in such a way that the composition is sub\-multi\-pli\-cat\-ive: 
\[
\|b \cdot a\| \leq\|b\|  \|a\|
\]
for any two composable arrows $b\in A(y,z)$ and $a\in A(x,y)$. A normed category is \emph{complete}, or is a \emph{Banach category}, if each  Hom space is complete.
Every semi-normed category $ A$ can be completed to a Banach  category, by first killing the null ideal
 $ \{a \mid \|a\| =0 \}\subseteq  A$ and then completing each quotient Hom space.

 We now come to our main object of study:
 
 \begin{definition}[$\Cstar$-category] 
 \label{defi:Cstar_cat}
 A \emph{pre-$\Cstar$-category} is a normed $*$-category  $ A$ satisfying the two additional axioms:
 \begin{itemize}
 \item[(iv)] \textbf{$\Cstar$-identity:} 
 $\|a^*a\|=\|a\|^2$ for all arrows $a\in A$.
 \item[(v)] \textbf{Positivity:} 
 For every arrow $a\in A(x,y)$, the element $a^*a$ of the endomorphism algebra $A(x,x)$ is positive (\emph{i.e.}, its spectrum $\{z\in \F \mid a^*a- z1_x$ is not invertible in $A(x,x)\}$ is contained in $[0,\infty[\,$). 
 \end{itemize}
A $\Cstar$-\emph{category} is a complete pre-$\Cstar$-category.  
We shall denote by $\Cstarcatun$ the category of small $\Cstar$-categories and $*$-functors between them.
 \end{definition}
 
 \begin{remark}
Axiom (v) is not particularly elegant. We refer to~\cite{mitchener:Cstar_cats}*{$\S$2} for a discussion of alternative axiomatizations of $\Cstar$-categories in the real and complex cases, as well as for a simple example showing that (v) is necessary (\emph{i.e.}, it does not follow from the other axioms). 
In the complex case, we may substitute (v) with the following axiom:
\begin{itemize}
\item[(v)$'$] For every $a\in A(x,y)$ there is a $b\in A(x,x)$ such that $a^*a=b^*b$.
\end{itemize}
In both the real and complex cases, we may simultaneously substitute (v) with~(v)$'$, and (iv) with:
\begin{itemize}
\item[(iv)$'$] \textbf{$\Cstar$-inequality:} $\|a\|^2\leq \|a^*a+b^*b\|$ for all parallel arrows $a,b\in A(x,y)$.
\end{itemize} 
In any case, the ultimate justification for the notion of $\Cstar$-category is that it precisely captures norm-closed, $*$-closed subcategories of the category of Hilbert spaces and bounded operators, by Proposition~\ref{prop:GNS}.
 \end{remark}

 \begin{examples}\label{ex:Cstarcats}
We mention a few examples that are relevant to this article.

 \begin{itemize}
\item[(a)]
 $\Cstar$-categories with one single object can be identified with (unital) $\Cstar$-algebras, and $*$-functors between them coincide with the usual (unit preserving) $*$-homo\-morph\-isms. Conversely, the endomorphism ring $A(x,x)$ at each object $x$ of a $\Cstar$-category $A$ is clearly a $\Cstar$-algebra.

\item[(b)] 
For instance, the base field $\F$ can be considered as a $\Cstar$-category with one single object, say~$\bullet$,  with endomorphism algebra $\F(\bullet,\bullet)= \F\, 1_{\bullet}$. For any $\Cstar$-category~$A$, the $*$-functors $F\colon\F\to A$ are in natural bijection with the objects $x=F(\bullet)$ of~$A$.
 
\item[(c)]
 It is easily verified (\emph{cf.\ e.g.}, \cite{mitchener:Cstar_cats}*{Prop.\ 5.1}) that the category $\hilb$ of Hilbert spaces and bounded linear operators is a (non small) $\Cstar$-category, for the usual operator norm and adjoints. A $*$-functor $F\colon A\to \hilb$ is called a \emph{representation of}~$A$. In Section~\ref{sec:univ} we shall consider representations $F\colon A\to \hilb$ of general $*$-categories~$A$, and even of ``quivers with admissible relations''.
 
 \item[(d)] If $A$ is a $\Cstar$-category, an $\F$-linear subcategory $B\subseteq A$ whose every Hom space is closed with respect to norm and involution is again a $\Cstar$-category, with the operations induced from~$A$. 
 (The only dubious point is the positivity axiom; but the spectrum, and thus the positivity, of an operator is independent from the ambient $\Cstar$-algebra in which it is computed, \emph{cf.}~\cite{black1}*{II.1.6.7}.)
 The sub-$\Cstar$-categories of $\hilb$ are called \emph{concrete $\Cstar$-categories}.
 
\item[(e)] 
A motivating example of a $\Cstar$-category with many objects is the \emph{internal Hom} $\Cstar(A,B)$, defined for any two $\Cstar$-categories~$A,B$. 
Its objects are the $*$-functors $F\colon A\to B$, and morphisms $\alpha\colon F\to F'$ are bounded natural transformations (see \S\ref{subsec:Cstarhom} below). 
Note that, even when $A$ and~$B$ are $\Cstar$-algebras, $\Cstar(A,B)$ is hardly ever a $\Cstar$-algebra. 
 

\end{itemize}
\end{examples}

As for $\Cstar$-algebras, we say that an arrow $a\in A(x,y)$ in a $\Cstar$-category is \emph{unitary} if its adjoint is its inverse:
 $a^*a=1_x\in A(x,x)$ and $aa^*=1_y\in A(y,y)$. The present article could be construed as an investigation of the following notion:

\begin{definition}[Unitary equivalence] 
\label{defi:unitary_eq}
A $*$-functor $F\colon A\to B$ is a \emph{unitary equivalence} if there exist a $*$-functor $G\colon B\to A$ and isomorphisms $u\colon GF\simeq \id_{A} $ and $v\colon FG\simeq  \id_{B} $ such that the components $u_x\in A(GFx,x)$  and $v_y\in B(FGy,y)$ are unitary elements for all $x\in \obj A$ and $y\in \obj B$.
\end{definition}

Unitary equivalences are simply called \emph{equivalences} in~\cite{mitchener:Cstar_cats}.  

\begin{remark}
If $A$ and $B$ are W*-categories then according to~\cite{glr}*{after Def.\ 6.3} every $*$-functor $A\to B$ which is an equivalence is also a unitary equivalence, as can be seen by objectwise applying  the polarization identity. Although the latter is not available in general $\Cstar$-categories (or even $\Cstar$-algebras), the conclusion is actually true for general $\Cstar$-categories, as we have learned from~\cite{kandelaki:KK_K}*{\S3.1}. 
As this fact had caused us some confusion in the past we now record it carefully in the next proposition.
\end{remark}

\begin{prop}
\label{prop:uni_eq}
If two objects of a $\Cstar$-category $A$ are isomorphic, then they are also isomorphic via a unitary isomorphism.
Indeed, if $a\colon x\to y$ is the given isomorphism, a unitary isomorphism $u\colon x\to y$ is obtained by the formula $u:=a (a^*a)^{-1/2}$.
\end{prop}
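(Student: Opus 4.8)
The plan is to verify directly that the proposed formula $u := a(a^*a)^{-1/2}$ produces a unitary, using the polar-decomposition idea specialized to the invertible case. The entire argument takes place inside the endomorphism $\Cstar$-algebra $A(x,x)$ (example~\ref{ex:Cstarcats}(a)), which lets me invoke the continuous functional calculus of ordinary $\Cstar$-algebra theory; the only genuinely categorical ingredient is keeping track of the source and target of each arrow.

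First I would record that $a^*a \in A(x,x)$ is both positive and invertible. Positivity is exactly axiom~(v). Invertibility follows from the hypothesis that $a$ is an isomorphism: if $a^{-1}\in A(y,x)$ denotes its inverse, then $(a^{-1})^*$ inverts $a^*$ (since $a^*(a^{-1})^*=(a^{-1}a)^*=1_x$ and $(a^{-1})^*a^*=(aa^{-1})^*=1_y$), so the element $a^{-1}(a^{-1})^*\in A(x,x)$ inverts $a^*a$. Being positive and invertible, $a^*a$ has compact spectrum contained in $[\varepsilon,\infty)$ for some $\varepsilon>0$, so $t\mapsto t^{-1/2}$ is continuous on the spectrum and the element $(a^*a)^{-1/2}$ is a well-defined positive (hence self-adjoint) invertible element of $A(x,x)$.

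Next I would check the two unitarity relations. Since $(a^*a)^{-1/2}$ is self-adjoint, $u^*=(a^*a)^{-1/2}a^*$, whence
\[
u^*u=(a^*a)^{-1/2}\,(a^*a)\,(a^*a)^{-1/2}=1_x,
\]
the last equality holding because all three factors lie in the commutative $\Cstar$-subalgebra of $A(x,x)$ generated by $a^*a$ and $1_x$, where under functional calculus the identity reduces to $t^{-1/2}\cdot t\cdot t^{-1/2}=1$. Finally, $u$ is invertible, being a product of the invertibles $a$ and $(a^*a)^{-1/2}$; hence from $u^*u=1_x$ I obtain $u^*=u^*(uu^{-1})=(u^*u)u^{-1}=u^{-1}$, and therefore $uu^*=uu^{-1}=1_y$ as well, so $u$ is unitary.

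I do not expect any serious obstacle. The one point requiring care is the existence of $(a^*a)^{-1/2}$, which is precisely where both the invertibility of $a$ (to push the spectrum of $a^*a$ off $0$, so that $t^{-1/2}$ is continuous there) and the fact that $A(x,x)$ is an honest $\Cstar$-algebra (to legitimize the functional calculus) enter. Everything else is routine manipulation of adjoints and inverses.
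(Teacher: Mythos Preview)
Your proof is correct and follows essentially the same polar-decomposition approach as the paper: both establish that $a^*a$ is positive and invertible, invoke functional calculus in the $\Cstar$-algebra $A(x,x)$ to form $(a^*a)^{-1/2}$, and verify $u^*u=1_x$ directly. The only minor difference is in the verification of $uu^*=1_y$: the paper computes it directly as $a(a^*a)^{-1}a^*=aa^{-1}(a^*)^{-1}a^*=1_y$, whereas you argue that $u$ is invertible and then deduce $u^*=u^{-1}$ from $u^*u=1_x$; both are equally valid and of comparable length.
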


\begin{proof}
(\emph{Cf.}~\cite{blackadar:Kth_op_alg}*{Prop.\ 4.6.4}.)
 If $a\in A(x,y)$ is an isomorphism, it follows from  $1_x^*=1_x$ and $1_y^*=1_y$ that $a^*$ is also invertible with inverse $(a^{-1})^*$.
 Thus $a^*a$ is an invertible element of the endomorphism $\Cstar$-algebra $A(x,x)$, and is moreover a positive element by axiom (v) of $\Cstar$-categories. By functional calculus there exists a self-adjoint $r:=(a^*a)^{-1/2}\in A(x,x)$ commuting with $a^*a$ and satisfying $r^2=(a^*a)^{-1}$.
Setting $u:=ar$, we can now compute 
$u^*u= r^*a^*ar= r^2 a^*a = 1_x$ and
$uu^* = ar^2a^* = a (a^*a)^{-1}a^*=aa^{-1}(a^*)^{-1}a^*=1_y$, proving the claim.
\end{proof}
 
It follows in particular that a unitary equivalence between two $\Cstar$-categories is precisely the same thing as a $*$-functor which induces an equivalence of the underlying ordinary categories (see Lemma~\ref{lemma:un_eq} if necessary). 

\subsection{Some basic properties}
\label{subsec:basics}

Many pleasant features of $\Cstar$-algebras generalize almost effortlessly to $\Cstar$-categories (see~\cite{glr, mitchener:Cstar_cats}). We now recall those that we shall use in the following, often without mention.  

First of all, every $*$-functor $F\colon A\to B$ between $\Cstar$-categories is automatically norm decreasing:
\[
\|F(a)\|_B \leq \|a\|_A 
\textrm{ for all } a\in A,
\]
so in particular it is continuous on every Hom space. If $F$ is \emph{faithful} (that is, injective on arrows) then it is automatically isometric. By combining these two facts, we see that every $*$-category possesses at most one \emph{$\Cstar$-norm} (\emph{i.e.}, one that turns it into a $\Cstar$-category). Similarly, we see that an invertible $*$-functor $F\colon A\to B$ (an isomorphism in $\Cstarcatun$) is isometric and thus identifies \emph{all} the structure of the $\Cstar$-categories $A$ and~$B$, norms included. More generally, every unitary equivalence $F$ is also isometric, since 
$
\|a\| \geq \|Fa\| \geq \|GFa\|= \|u_y^* \cdot a \cdot u_x\| = \|a\|
$
(for any $a\in A(x,y)$, where $G$ is a $*$-functor quasi-inverse to $F$ and $u\colon GF\simeq \id$ is unitary).

In the next section we shall give a unified treatment of colimits in $\Cstarcatun$ together with other universal constructions. The case of limits is much easier:

\begin{lemma} \label{lemma:complete}
The category $\Cstarcatun$ has all small limits.
\end{lemma}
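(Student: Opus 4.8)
The plan is to verify the classical criterion for completeness: a category that possesses all small products together with all equalizers of parallel pairs has all small limits. I would therefore construct these two classes of limits explicitly in $\Cstarcatun$ and leave the standard general reduction to the reader.

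For \emph{products}, given a small family $(A_i)_{i\in I}$ of small $\Cstar$-categories I would form the $\Cstar$-category $\prod_i A_i$ whose object set is $\prod_i\obj(A_i)$ and whose Hom spaces are the \emph{bounded} families
\[
\Big(\prod_i A_i\Big)\big((x_i),(y_i)\big) := \Big\{(a_i) : a_i\in A_i(x_i,y_i),\ \sup_i\|a_i\|<\infty\Big\},
\]
equipped with the supremum norm and with all operations (linear structure, composition, involution) defined componentwise; the projections $\pi_j$ are the evident $*$-functors. The verification that this is a $\Cstar$-category is routine: completeness of each Hom space follows from the completeness of the $A_i$, the C*-identity because $\|(a_i)^*(a_i)\|=\sup_i\|a_i\|^2=\|(a_i)\|^2$, and positivity from the observation that the endomorphism algebra at $(x_i)$ is precisely the product $\Cstar$-algebra $\prod_i A_i(x_i,x_i)$, in which an element is positive exactly when each of its components is, while each $a_i^*a_i$ is positive by axiom~(v) in $A_i$. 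For the universal property, a family of $*$-functors $F_i\colon C\to A_i$ assembles into $F=(F_i)\colon C\to\prod_i A_i$; the only nontrivial point is that $(F_i(a))_i$ is a \emph{bounded} family, and this is guaranteed by the fact recalled above that every $*$-functor is norm decreasing, so that $\sup_i\|F_i(a)\|\le\|a\|<\infty$.

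For \emph{equalizers} of a parallel pair $F,G\colon A\to B$, I would take $E\subseteq A$ to be the $\F$-linear subcategory with
\[
\obj(E)=\{x\in\obj(A) : Fx=Gx\},\qquad E(x,y)=\{a\in A(x,y) : F(a)=G(a)\}.
\]
Since $F$ and $G$ are linear and continuous, $E(x,y)$ is the kernel of the bounded operator $F-G$ and hence a closed subspace; it is visibly stable under the involution and under composition, and it contains the identities of the surviving objects. Thus $E$ is a sub-$\Cstar$-category of $A$ by example~\ref{ex:Cstarcats}(d), and the inclusion $E\hookrightarrow A$ is immediately seen to satisfy the required universal property.

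The constructions are thus entirely transparent, and I anticipate no serious obstacle: the only points needing genuine (if light) input from the theory are the two I have isolated above, namely that bounded componentwise families again satisfy the positivity axiom of $\Cstar$-categories, and that the norm-decreasing property of $*$-functors is exactly what forces the canonical arrow to land in the bounded product.
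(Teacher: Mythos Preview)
Your proof is correct and follows the same strategy as the paper: reduce to products and equalizers, build the product from bounded families under the sup norm, and realize the equalizer as a sub-$\Cstar$-category of~$A$. There is one small but genuine difference in the equalizer step: you take $\obj(E)=\{x\in\obj A : Fx=Gx\}$, whereas the paper keeps \emph{all} objects of~$A$ and sets $E(x,y)=0$ whenever $Fx\neq Gx$ or $Fy\neq Gy$. Your choice is in fact the right one---with the paper's convention the inclusion $E\hookrightarrow A$ does not equalize $F$ and~$G$ on objects, so strictly speaking it is not an equalizing cone at all; restricting the object set, as you do, repairs this. You also make explicit a point the paper leaves to the reader, namely that the induced arrow into the product lands in the bounded families precisely because $*$-functors are automatically norm-decreasing.
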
 

\begin{proof}
As in any category, the existence of arbitrary limits follows from that of arbitrary products and equalizers. For the latter, let $F,G\colon A\rightrightarrows B$ be two $*$-functors.
Define a new $\Cstar$-category $E$ to be the $\F$-linear subcategory of $A$ with objects
$\obj E:=\{x\in \obj A\mid Fx=Gx \}$ 
and maps $E(x,y):=\{a\in A(x,y)\mid F(a)=G(a)\}$.
It follows immediately from the properties of $F$ and $G$ that the $E(x,y)$ are norm-closed ($*$-functors are continuous!) and $*$-closed subspaces containing the identities and closed under composition; therefore $E$ is a $\Cstar$-category with the induced operations of~$A$ (Ex.~\ref{ex:Cstarcats}\,(d)). 
It is now straightforward to check that the inclusion $*$-functor $E\hookrightarrow A$ is the equalizer of $F$ and~$G$.
The product of a set $\{A_i\}_i$ of $\Cstar$-categories has also an easy construction: let $P$ be the category with objects $\obj P:= \prod_i \obj A_i$ and with morphism sets 
\[
P \big( (x_i) , (y_i) \big):= 
\left\{ (a_i)   \bigg|    \| (a_i) \|_{\infty} := \sup_i \|a_i\|_{A_i} < \infty \right\} 
\subseteq \prod_i A_i  (x_i, y_i).
\] 
As with $\Cstar$-algebras, one checks easily that the coordinatewise operations and the norm $\|\cdot \|_{\infty}$ equip $P$ with the structure of a pre-$\Cstar$-category.
Moreover, each $P(x,y)$ is complete, so $P$ is a $\Cstar$-category. This~$P$, together with the canonical projections $P\to A_i$, provides the product of the $A_i$ in~$\Cstarcatun$.
\end{proof}

Finally, one can always prove properties of $\Cstar$-categories by way of the basic representation theorem: 

\begin{prop} \label{prop:GNS}
Every small $\Cstar$-category is isomorphic to a concrete $\Cstar$-cat\-eg\-ory. Or equivalently, every small $\Cstar$-category $A$ has a faithful representation $A\to \hilb$. 
\end{prop}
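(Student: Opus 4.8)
The plan is to adapt the Gelfand--Naimark--Segal construction to the many-object setting, assembling a faithful representation as a direct sum of ``based'' GNS representations, one for each object and each state on its endomorphism $\Cstar$-algebra. First note that the two formulations are equivalent: a concrete $\Cstar$-category is by definition a norm- and $*$-closed subcategory of $\hilb$ (ex.~\ref{ex:Cstarcats}(d)). A faithful $*$-functor $A\to\hilb$ is automatically isometric (as recalled above), hence its image is a complete, therefore norm-closed, $*$-closed subcategory of $\hilb$, and $A$ maps isomorphically onto it. So it suffices to produce one faithful representation.

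Fix an object $z\in\obj A$ and a state $\phi$ on the endomorphism $\Cstar$-algebra $A(z,z)$. For every object $x$, endow the Hom space $A(z,x)$ with the sesquilinear form $\langle a,b\rangle_\phi:=\phi(a^*b)$, which makes sense since $a^*b\in A(z,z)$. The positivity axiom (v) guarantees that $a^*a\geq 0$, so $\phi(a^*a)\geq 0$ and the form is positive semidefinite; quotienting by its null space and completing yields a Hilbert space $H_{z,\phi}(x)$. Left composition supplies the action: for an arrow $c\in A(x,x')$ the assignment $a\mapsto ca$ descends to a bounded operator $H_{z,\phi}(x)\to H_{z,\phi}(x')$, the bound $\|c\|$ coming from the inequality $c^*c\leq\|c\|^2 1_x$ in $A(x,x)$ together with $\phi(a^*c^*ca)\leq\|c\|^2\phi(a^*a)$. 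The one-line computation $\langle ca,b\rangle_\phi=\phi(a^*c^*b)=\langle a,c^*b\rangle_\phi$ shows that adjoints go to adjoints, so we obtain a representation $\pi_{z,\phi}\colon A\to\hilb$.

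Now form the direct sum $\pi:=\bigoplus_{(z,\phi)}\pi_{z,\phi}$, indexed by all pairs consisting of an object $z\in\obj A$ and a state $\phi$ on $A(z,z)$; this is a set since $A$ is small. Concretely $\pi(x)=\bigoplus_{(z,\phi)}H_{z,\phi}(x)$, and $\pi$ is well defined because each summand $\pi_{z,\phi}$ is norm-decreasing. To see that $\pi$ is faithful, take a nonzero arrow $c\in A(x,x')$. Then $c^*c$ is a nonzero positive element of $A(x,x)$ with $\|c^*c\|=\|c\|^2>0$, so by the standard theory of states on $\Cstar$-algebras there is a state $\phi$ on $A(x,x)$ with $\phi(c^*c)=\|c\|^2$. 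Choosing $z=x$, the class of $1_x$ is a unit vector in $H_{x,\phi}(x)$, and $\pi_{x,\phi}(c)$ sends it to the class of $c$ in $H_{x,\phi}(x')$, whose norm is $\sqrt{\phi(c^*c)}=\|c\|\neq 0$; hence $\pi(c)\neq 0$.

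The only point requiring real care is the positivity axiom (v): it is exactly what makes the GNS forms positive semidefinite, and hence the construction possible, and it is equally what guarantees enough states to separate the positive elements $c^*c$. Everything else is a routine transcription of the $\Cstar$-algebra case, valid verbatim over both $\R$ and $\C$, and the boundedness of the direct sum is automatic since each $\pi_{z,\phi}$ is norm-decreasing.
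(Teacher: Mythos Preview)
Your argument is correct: it is the standard categorified GNS construction (direct sum of cyclic representations indexed by pairs $(z,\phi)$ with $\phi$ a state on $A(z,z)$), and your use of axiom~(v) at the two crucial points---positivity of the sesquilinear forms and separation of nonzero arrows via states on endomorphism algebras---is exactly right. The paper itself does not supply a proof but simply refers to \cite{glr}, \cite{micic} and \cite{mitchener:Cstar_cats}, all of which carry out essentially this same GNS-type construction, so your approach coincides with theirs.
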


\begin{proof}
See~\cite{glr}*{Prop.\,1.9},~\cite{micic}*{Thm.\,3.48} and~\cite{mitchener:Cstar_cats}*{Thm.\,6.12} for more details.
\end{proof}

\subsection{$\Cstar$-categories of $*$-functors}
\label{subsec:Cstarhom}

Let $A$ and $B$ be two $\Cstar$-categories, with $A$ small. We recall from~\cite{glr} the definition of the $\Cstar$-category $\Cstar(A,B)$ (denoted $(\mathfrak A,\mathfrak B)$ in \emph{loc.\,cit.}).
Its object set consists of all $*$-functors $F\colon A\to B$. For any two $*$-functors $F,F'\in \obj \Cstar(A,B)$, the set of natural transformations $\alpha\colon F\to F'$ between them forms an $\F$-vector space.  Setting
\[
\| \alpha \|_{\infty} := \sup_{x\in \obj A} \| \alpha_x \|_B ,
\]
we define the Hom spaces of $\Cstar(A,B)$ to be the subspaces
\[
\Cstar(A,B)(F,F') := \{ \alpha\colon F\to F' \mid \| \alpha \|_{\infty} < \infty \} 
\] 
of \emph{bounded} natural transformations.
Then the pointwise algebraic operations
\[
(\alpha+ z \alpha')_x = \alpha_x + z \alpha'_x 
\quad,\quad
(\alpha \cdot \alpha')_x = \alpha_x \cdot \alpha'_x
\quad,\quad
(\alpha^*)_x = (\alpha_x)^*
\quad,
\]
together with the norm $\|\cdot \|_{\infty}$, turn 
$\Cstar(A,B)$ into a $\Cstar$-category (\cite{glr}*{Prop.\,1.11}). 

\begin{remark}
Not all morphisms of $*$-functors are bounded. 
For instance, let $A$ be the $\Cstar$-algebra with $\obj A := \N$ and with Hom spaces $A(n,n):=\F$ and $A(n,m):=0$ for~$n\neq m$ (this is the coproduct $A=\coprod_{\N} \F$). 
Then the collection $(n)_{n\in \N}$ defines a natural transformation $\alpha\colon \id_A \to \id_A$, but obviously $\|\alpha \|_{\infty}= \infty$.
\end{remark}

It is natural to ask  whether there exists a monoidal structure on $\Cstarcatun$ for which $\Cstar(-,-)$ is the internal Hom. The answer is Yes, and will be provided by the maximal tensor product in Theorem~\ref{thm:closed_monoidal_Cstarcatun}.

\section{Universal $\Cstar$-categories}
\label{sec:univ}

In this section we construct colimits in $\Cstarcatun$, groupoid $\Cstar$-algebras $\Cstarmax\mathcal G$ (see also their generalizations $\Cstarism(C)$ in Definition~\ref{defi:ism}) and maximal tensor products $A \otimes_{\max} B$. These are all special cases of  ``universal constructions'', whose analog for $\Cstar$-algebras is well-known and widely used. 
For future use, we provide here a treatment of such general universal constructions for $\Cstar$-categories. This also suits the spirit of the article, which is to show that $\Cstar$-categories offer a powerful and flexible alternative setting to that of $\Cstar$-algebras.

\subsection{$\Cstar$-categories defined by generators and relations}

 As for $\Cstar$-algebras or pro-$\Cstar$-algebras~\cite{phillips}, it is possible to construct a $\Cstar$-category by generators and relations, provided the set of relations is well-behaved. 
Here in addition we have to specify a set of objects, and for this end it is natural to employ the notion of quiver.
A \emph{quiver} is a labeled oriented graph, possibly with loops and multiple edges; in other words, a quiver~$Q$ consists of a set $\obj Q$ of vertices, here called \emph{objects}, and of a set $Q$ of \emph{arrows}, together with \emph{source} and \emph{target} functions $s,t\colon Q\to \obj Q$. We write $q\in Q(x,y)$ to indicate that the arrow $q$ has source $s(q)=x$ and target $t(q)=y$.
A \emph{morphism of quivers} $\rho\colon Q\to Q'$ assigns to every object $x\in \obj Q$ an object $\rho(x)\in \obj Q'$, and to every arrow $q\in Q(x,y)$ an arrow $\rho(q)\in Q'(\rho(x),\rho(y))$.
Thus categories are quivers with the extra structure given by composition, and functors are quiver morphisms that preserve composition.

We also want to specify relations that should hold between the arrows, when interpreted in a $\Cstar$-category. 
Given a quiver~$Q$, a \emph{relation for~$Q$} is simply a statement about arrows of~$Q$ which makes sense for elements of a $\Cstar$-category. 
A \emph{quiver with relations} $(Q,R)$ is a quiver~$Q$ together with a set~$R$ of relations for~$Q$. 
A \emph{representation}  of the quiver with relations $(Q,R)$ is a quiver morphism $\rho\colon Q\to A$ into (the underlying quiver of) a $\Cstar$-category~$A$, such that the arrows $\rho(q)$ satisfy in~$A$ all the relation of~$R$. 
(For example, a relation $r\in R$ for $Q$ may read ``$ \|q_1^{1/3}\|\leq \|q_2\cdot q_3^* -1_{s(q_1)}\| $'', for some arrows $q_1,q_2,q_3\in Q$. If $\rho\colon Q\to A$ is a representation of~$(Q,R)$, then the statement ``$\|\rho(q_1)^{1/3}\|\leq \| \rho(q_2)\cdot \rho(q_3)^* -1_{s(\rho(q_1))}\|$'' is required to hold in~$A$. In particular, it must make sense.)

\begin{remark}  \label{rem:non_unital}
The only relations that will be used in this article are statements involving algebraic combinations of arrows and norms thereof.
However, since there is no extra effort involved, it seems worth it to give here the general treatment of universal constructions in view of future applications, where more sophisticated operations and conditions could be involved. 
We are thinking for instance of analytic functions such as exponentiation, or continuity conditions when dealing with topological spaces (\emph{cf.}\,\cite{phillips} and~\cite{joachim-johnson}).
We also note at this point that the definitions and results of this section all have evident non-unital versions. 
But for simplicity and focus we shall stick with unital $\Cstar$-categories.
\end{remark}

The next definition, as well as the proof of Theorem~\ref{thm:univ}, have been adapted from Phillips~\cite{phillips}*{Def.\,1.3.1,\,1.3.2}.

\begin{definition}[Admissible quiver with relations] 
\label{defi:admissibleQR} 
A quiver with relations $(Q,R)$ is \emph{admissible}, if the following five requirements are satisfied.
\begin{itemize}
\item[(1)] The unique quiver morphism $\rho\colon Q\to \mathbf 0$ to the final $\Cstar$-category ($=$ the zero $\Cstar$-algebra) is a representation of $(Q,R)$. 
\item[(2)] If $\rho\colon Q\to A$ is a representation of $(Q,R)$ and if $B$ is a sub-$\Cstar$-category of~$A$ which contains the image under~$\rho$ of all objects and arrows of~$Q$, then the restriction $\rho'\colon Q\to B$ is also a representation of~$(Q,R)$. 

\item[(3)]  If $\rho\colon Q\to A$ is a representation of $(Q,R)$ and if $\varphi\colon A\to B$ is an isomorphism of $\Cstar$-categories, then the composition $\varphi \cdot \rho\colon Q\to B$ is also a representation of~$(Q,R)$.

\item[(4)] For every arrow $q\in Q$ there exists a constant $c(q)$ such that $\|\rho(q)\|\leq c(q)$ for all representations $\rho$ of $(Q,R)$.

\item[(5)] If $\{\rho_{i} \colon Q\to A_i \}_{i}$ is a nonempty small set of representations of~$(Q,R)$, then the quiver morphism $(\rho_i)_i \colon Q \to \prod_i A_i$ into the product $\Cstar$-category mapping $q \in Q(x,y)$ to $(\rho_i(q) )_i$
(which is always well defined if (4) holds, \emph{cf.}\ the proof of Lemma~\ref{lemma:complete}) is again a representation of $(Q,R)$.

\end{itemize}
\end{definition}

\begin{thm} \label{thm:univ}
Given any small \textup(\emph{i.e.}, the arrow and object sets are small\textup)  admissible quiver with relations~$(Q,R)$, there exist a small $\Cstar$-category $\mathrm{U}(Q,R)$ and a representation $\rho_{(Q,R)}\colon Q\to \mathrm{U}(Q,R)$ of $(Q,R)$ such that, for every representation $\rho\colon Q\to A$ of $(Q,R)$ into some \textup(possibly large\textup) $\Cstar$-category, there exists a unique $*$-functor $\tilde \rho:\mathrm{U}(Q,R)\to A$ such  that $\rho=\tilde \rho \cdot \rho_{(Q,R)}$. 
\begin{equation*}
\xymatrix{
Q \ar[rr]^-{\rho_{(Q,R)}} \ar[dr]_{\rho} && \mathrm{U}(Q,R) \ar@{..>}[dl]^{\tilde \rho} \\
&A&
}
\end{equation*}
We call $\mathrm{U}(Q,R)$ the \emph{universal $\Cstar$-category generated by $Q$ with relations~$R$}.
\end{thm}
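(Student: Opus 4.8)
The plan is to adapt the standard universal construction for $\Cstar$-algebras (Phillips) to the categorical setting. First I would form the \emph{free $*$-category} $P=P(Q)$ on the quiver $Q$: its objects are $\obj Q$, and each Hom space $P(x,y)$ is the $\F$-vector space freely spanned by the composable formal words in the arrows of $Q$ and their formal adjoints running from $x$ to $y$, augmented by a formal unit $1_x$ when $x=y$. Composition is concatenation of words, and the involution reverses a word and dualizes each letter. By construction $P$ enjoys the obvious universal property: every quiver morphism $\rho\colon Q\to A$ into a $*$-category extends uniquely to a $*$-functor $\bar\rho\colon P\to A$, and the representations of $(Q,R)$ correspond exactly to those $\bar\rho$ whose restriction to $Q$ satisfies every relation in $R$.

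Next I would introduce the \emph{universal seminorm}
\[
\|a\|_u:=\sup_{\rho}\|\bar\rho(a)\|\qquad(a\in P),
\]
the supremum ranging over all representations $\rho$ of $(Q,R)$. Axiom~(1) makes this set nonempty (the zero representation is always available), and axiom~(4), together with the fact that each $a$ is a finite combination of words in the $q$ and $q^*$, forces the supremum to be finite. Pushing through the suprema the inequalities valid in each target $\Cstar$-category shows at once that $\|\cdot\|_u$ is a $\Cstar$-seminorm: it is submultiplicative, satisfies $\|a^*\|_u=\|a\|_u$, and obeys the $\Cstar$-identity $\|a^*a\|_u=\|a\|_u^2$ (using $\sup_\rho\|\bar\rho(a)\|^2=(\sup_\rho\|\bar\rho(a)\|)^2$).

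The construction of $\mathrm U(Q,R)$ then proceeds by realizing this seminorm inside a genuine $\Cstar$-category, which simultaneously dispatches completeness and positivity. The delicate point is \emph{set-theoretic}: representations form a proper class. Using axiom~(2) I would replace each representation by its corestriction to the sub-$\Cstar$-category generated by its image, which alters none of the values $\|\bar\rho(a)\|$; since such a generated $\Cstar$-category has object set a quotient of $\obj Q$ and Hom spaces of cardinality bounded solely in terms of $(Q,R)$ and $|\F|$, axiom~(3) lets me compute the seminorm using only a \emph{small set} $S$ of representatives, so that $\|a\|_u=\sup_{\rho\in S}\|\bar\rho(a)\|$. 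Now axiom~(5) guarantees that the diagonal $\delta:=(\bar\rho)_{\rho\in S}\colon P\to\prod_{\rho\in S}A_\rho$ into the product $\Cstar$-category of lemma~\ref{lemma:complete} is (the extension of) a representation, and by construction $\|\delta(a)\|=\|a\|_u$. I define $\mathrm U(Q,R)$ to be the norm-closure of the image $\delta(P)$ inside the product, and set $\rho_{(Q,R)}:=\delta|_Q$. As a norm-closed, $*$-closed $\F$-linear subcategory of a $\Cstar$-category, $\mathrm U(Q,R)$ is itself a $\Cstar$-category by Example~\ref{ex:Cstarcats}(d)—in particular the positivity axiom~(v) comes for free—and it is small.

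Finally I would verify the universal property. For an arbitrary representation $\rho\colon Q\to A$ one has the pointwise bound $\|\bar\rho(a)\|\le\|a\|_u=\|\delta(a)\|$, so $\delta(a)\mapsto\bar\rho(a)$ is a well-defined contractive $*$-functor on the dense subcategory $\delta(P)$; by continuity it extends uniquely to a $*$-functor $\tilde\rho\colon\mathrm U(Q,R)\to A$ satisfying $\tilde\rho\circ\rho_{(Q,R)}=\rho$. Uniqueness is immediate, since any two such $*$-functors agree on the image of $Q$, hence on the $*$-subcategory it generates, which is dense. I expect the \textbf{main obstacle} to be precisely the set-theoretic reduction of the third paragraph—taming the class of representations by a set—which is exactly what axioms~(2)–(4) are designed to permit; by contrast, the positivity axiom, which is the awkward part of the definition of a $\Cstar$-category, is neatly sidestepped by building $\mathrm U(Q,R)$ as a subobject of an honest product rather than by abstractly completing the seminormed $*$-category $(P,\|\cdot\|_u)$.
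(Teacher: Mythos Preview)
Your proposal is correct and follows essentially the same route as the paper: build the free $*$-category on~$Q$, use axioms~(2)--(3) to reduce to a small complete set~$S$ of dense representations, embed into their product via axioms~(4)--(5), and take the sub-$\Cstar$-category generated by the image. The one minor difference is in verifying the universal property: rather than invoking your seminorm inequality $\|\bar\rho(a)\|\le\|\delta(a)\|$, the paper projects onto the coordinate $i_0\in S$ isomorphic to (the dense corestriction of) the given~$\rho$ and composes with the isomorphism---this makes the object-level well-definedness of~$\tilde\rho$ (namely, that $\delta(x)=\delta(y)$ forces $\rho(x)=\rho(y)$) transparent, a point your contractive-extension argument leaves implicit but which ultimately rests on the same completeness of~$S$.
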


\begin{construction}
\label{constr:free}
Given any quiver~$Q$, the \emph{free $*$-cat\-eg\-ory over~$Q$}  is the $*$-category $\mathrm F(Q)$ with the same set of objects $\obj Q$ and whose morphisms are recursively constructed by adding formal units, compositions, adjoints and finite linear combinations of arrows of~$Q$, and then by modding out the algebraic relations which make the units units, the Hom sets $\F$-linear spaces, composition bilinear and $*$ an involution. 
This construction provides a left adjoint for the forgetful functor from $*$-categories to quivers, whose unit is the evident inclusion $Q\to \mathrm F(Q)$. 
Indeed, every quiver morphism $\rho\colon Q\to A$ to a $*$-category extends to a unique $*$-functor $\mathrm F(Q)\to A$, by setting
$\rho(1_x):= 1_{\rho(x)}$ and
\[
\rho(q \cdot p):=\rho(q)\cdot \rho (p)
\quad,\quad
 \rho \left(\sum_i z_i q_i \right):=\sum_i z_i\rho(q_i)
\quad,\quad \rho(q^*):=\rho(q)^*
 \]
for each of the three moves in the recursive definition of the arrows of~$\mathrm F(Q)$.

\end{construction}

\begin{terminology}
We say that a representation $\rho\colon Q\to A$ is \emph{dense} if the smallest sub-$\Cstar$-category of $A$ containing the image $\rho(Q)$ is $A$ itself. If $\rho\colon Q\to A$ is a dense representation, it is clearly surjective on objects. Moreover, every $*$-functor $A\to B$ to another $\Cstar$-category, being continuous, is uniquely determined by the image of~$\rho (Q)$. 
Two representations $\rho\colon Q\to A$ and $\rho'\colon Q\to A'$ of~$Q$ are \emph{isomorphic} if there exists an isomorphism $\varphi\colon A\simeq A'$ of $\Cstar$-categories such that~$\varphi\rho=\rho'$.
\end{terminology}

\begin{lemma}
\label{lemma:smalll}
The isomorphism classes of dense representations of~$(Q,R)$ form a small set.
\end{lemma}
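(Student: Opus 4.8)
The plan is to bound, uniformly in the representation, the cardinality of the target $\Cstar$-category $A$ of a dense representation, and then to invoke the standard set-theoretic principle that structures of bounded size form a set up to isomorphism. Admissibility axiom~(4) is not actually needed for this; the only analytic input is that each individual norm $\|\tilde\rho(a)\|$ is finite, which holds automatically because $A$ is a genuine $\Cstar$-category.

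First I would control the algebraic part. Set $\kappa := \max\{|Q|,\,|\obj Q|,\,|\F|,\,\aleph_0\}$, a genuine cardinal since $(Q,R)$ is small (here $|Q|$ denotes the cardinality of the arrow set). By construction~\ref{constr:free}, any representation $\rho\colon Q\to A$ extends uniquely to a $*$-functor $\tilde\rho\colon \mathrm F(Q)\to A$, and the smallest sub-$\Cstar$-category of $A$ containing $\rho(Q)$ is the norm-closure of the image $\tilde\rho(\mathrm F(Q))$ (which is a $*$-subcategory, whose closure is a sub-$\Cstar$-category by example~\ref{ex:Cstarcats}(d)). Since $\mathrm F(Q)$ is obtained from the $\le\kappa$ generators by the finitary operations of construction~\ref{constr:free} with coefficients in $\F$, one has $|\mathrm F(Q)|\le\kappa$, and therefore $|\tilde\rho(\mathrm F(Q))|\le\kappa$.

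Next I would pass to the completion. By density $\obj A=\rho(\obj Q)$, so $|\obj A|\le\kappa$; and each Hom space $A(x,y)$ is the completion of the corresponding Hom space of $\tilde\rho(\mathrm F(Q))$, which admits a dense subset of cardinality $\le\kappa$. A complete metric space with a dense subset of size $\le\kappa$ has cardinality $\le\kappa^{\aleph_0}$ (its points are limits of Cauchy sequences drawn from the dense subset), so putting $\lambda:=\kappa^{\aleph_0}$ yields $|A(x,y)|\le\lambda$ and hence $|A|\le\lambda$, a bound depending only on $(Q,R)$ and not on $\rho$. Now fix a set $S$ with $|S|=\lambda$. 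Transporting structure along a bijection onto a subset of $S$, every dense representation is isomorphic to one whose target has its objects and arrows among the elements of $S$. The $\Cstar$-category structures supported on subsets of $S$---the data being the Hom-set assignment, composition, involution, the $\F$-linear structures and the norm, each a subset of a fixed product of copies of $S$, $\F$ and $\R$---form a \emph{set} $\mathcal A$, and for each $A\in\mathcal A$ the quiver morphisms $Q\to A$ form a set. Thus $\coprod_{A\in\mathcal A}\{\rho\colon Q\to A\}$ is a set surjecting onto the isomorphism classes of dense representations of $(Q,R)$, which are therefore small.

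The only delicate point is the reduction ``every dense representation is isomorphic to one supported on $S$''. I would make this precise by observing that a representation is the pair $(A,\rho)$, that the preceding paragraph bounds $|A|\le\lambda=|S|$, and that pushing the $\Cstar$-category structure of $A$ forward along any injection $A\hookrightarrow S$ produces an isomorphic $\Cstar$-category together with an isomorphic representation; this is precisely the notion of isomorphism of representations recorded in the preceding terminology. Everything else is routine cardinal arithmetic, so I expect no genuine obstacle beyond stating this transport-of-structure step carefully.
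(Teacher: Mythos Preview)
Your argument is correct, but it follows a different route from the paper's. The paper argues more algebraically: every dense representation $\rho\colon Q\to A$ factors through $\mathrm F(Q)$ via a $*$-functor $\pi$ with dense image, so $A$ is isomorphic to the completion $\overline{\mathrm F(Q)/\ker\pi}^{\,\nu}$ for the norm $\nu$ induced from~$A$; since $\mathrm F(Q)$ is small there is only a set of $*$-ideals $I\subseteq\mathrm F(Q)$ and, for each, only a set of norms on the quotient, giving a set of representatives directly. Your approach instead bounds the cardinality of $A$ uniformly and then transports structure onto a fixed set~$S$. This is a more general-purpose device that works in many ``small presentation'' situations without any algebraic input, whereas the paper's parametrization by (ideal, norm) pairs is more concrete and describes what the possible targets actually look like.

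One small point worth making explicit: your transport-of-structure step tacitly uses admissibility condition~(3). Pushing the $\Cstar$-category structure of $A$ along an injection into $S$ yields an isomorphic $\Cstar$-category $A'$ and a quiver morphism $\rho'\colon Q\to A'$, but you need~(3) to know that $\rho'$ is again a \emph{representation} of $(Q,R)$---otherwise it does not lie in the class being partitioned and cannot serve as a representative of the isomorphism class of~$\rho$. The paper invokes~(3) explicitly for exactly this reason. Your observation that condition~(4) is not needed here is correct; the paper does not use it either.
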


\begin{proof}
Every representation $\rho\colon Q\to A$ factors uniquely as $Q\to \mathrm F(Q)\to A$, where the second map $\pi\colon\mathrm F(Q)\to A$ is the induced $*$-functor on the free $*$-category over~$Q$, as in Construction~\ref{constr:free}. 
If $\rho$ is dense, then $\pi$ has dense image in each Hom set (because the norm-closure of the image is a sub-$\Cstar$-category of~$A$ still containing the image). 
Moreover, in this case $A$ is isomorphic to the completion of the quotient $*$-category $\mathrm F(Q)/\ker(\pi)$ with respect to the norm induced from~$A$. 
But $Q$ is small, so there is only a set of possible quotients $\mathrm F(Q)/I$ (where $I$ indicates some $\F$-linear categorical ideal compatible with the involution), and for each of them there is only a set of possible norms~$\nu$. Using condition~(3), we conclude that every dense representation is isomorphic to a representation of the form $Q\to \smash{\overline{\mathrm F(Q)/I}^{\,\nu}}$ just described, of which there is only a set.
\end{proof}

\begin{proof}[Proof of Theorem~\ref{thm:univ}.]
Given an admissible quiver with relations~$(Q,R)$, 
choose a small full set  of representatives $\{\rho_i\colon Q\to A_i\}_{i\in I}$ for the isomorphism classes of its dense representations, as in Lemma~\ref{lemma:smalll}. 
By condition~(1), $I$ is non-empty. 
By condition~(4) the induced quiver morphism $(\rho_i)_i\colon Q\to \prod_{i\in I} A_i$ into the product is well-defined, and by condition~(5) it is again a representation of $(Q,R)$.
Let 
\[
\rho_{(Q,R)} \colon Q \longrightarrow \mathrm{U}(Q,R)
\]
be the restriction of this representation to the smallest sub-$\Cstar$-category of $\prod_{i\in I} A_i$ still containing the image of~$(\rho_i)_i$. 
By condition~(2), $\rho_{(Q,R)}$ is a representation of~$(Q,R)$, which moreover is dense by construction.
Let us verify that $\rho_{(Q,R)}$ satisfies the universal property of the theorem.

Let $\pi_i\colon \mathrm{U}(Q,R)\to A_i$ denote the restrictions to $\mathrm{U}(Q,R)$ of the canonical projections coming with the product. 
Note that each $\pi_i$ is a $*$-functor satisfying $\pi_i \cdot \rho_{(Q,R)} = \rho_i$.
If $\rho\colon Q \to A$ is an arbitrary representation of $(Q,R)$, it factors as a dense representation $\rho'\colon Q\to A'$ 
(by taking $A'$ to be the sub-$\Cstar$-category of $A$ generated by $\rho(Q)$ and because of~(2)), followed by the faithful inclusion $\iota\colon A' \to A$. 
By the definition of~$I$, there exist an $i_0\in I$ and an isomorphism of $\Cstar$-categories $\varphi\colon A_{i_0} \to A'$ such that $\varphi \cdot \rho_{i_0} = \rho'$.
We set 
$\tilde \rho := \iota \cdot \varphi \cdot \pi_{i_0} \colon \mathrm{U}(Q,R)\to A$. 
Then
\[
\tilde  \rho \, \rho_{(Q,R)} 
=  \iota \, \varphi \, \pi_{i_0} \, \rho_{(Q,R)}
= \iota \, \varphi \, \rho_{i_0}
= \iota \, \rho'
= \rho,
\]
and any $*$-functor $\mathrm{U}(Q,R)\to A$ satisfying this equality must agree with $\tilde \rho$ on~$Q$, and therefore is equal to $\tilde \rho$ by the density of~$\rho_{(Q,R)}$.
\end{proof}

\begin{example}[Algebraic relations]
 \label{ex:algebraic-relations}
Let $Q$ be any small quiver and let $R$ be any set of \emph{algebraic relations} on the arrows of~$Q$. That is, $R$ consists of equations between elements of the free $*$-category $\mathrm F(Q)$. Then $(Q,R)$ is admissible and the universal $\Cstar$-category $\mathrm{U}(Q,R)$ exists, as soon as condition (4) holds. Indeed (1), (2) and (3) are obvious, and (5) is satisfied because the $*$-algebraic operations in a product of $\Cstar$-categories are defined coordinatewise.
In particular, if $B$ is any $*$-category we can consider the pair $(Q(B),R(B))$, consisting of the underlying quiver $Q(B)$ and the set $R(B)$ of all algebraic relations of~$B$. Then the \emph{enveloping $\Cstar$-category} 
\[
\rho_B:=\rho_{(Q(B),R(B))} 
\colon B \longrightarrow \mathrm{U}(Q(B),R(B))=:\mathrm{U}(B)
\] 
of the $*$-category~$B$ exists, provided that
\begin{equation} \label{norm_condition}
\| b \|_{\infty}:=\sup \{ \| \rho(b) \| \mid \rho \colon B\to A \textrm{ is a}*\textrm{-functor to a }\Cstar\textrm{-category }  \} 
< \infty
\end{equation}
for all arrows $b \in B$. We see from the proof of Theorem~\ref{thm:univ} that the universal $*$-functor $\rho_B\colon B \to \mathrm{U}(B)$ (if it exists!) is surjective on objects and has dense image. Therefore the enveloping $\Cstar$-category $\mathrm{U}(B)$ is actually the completion of the $*$-category $B$ with respect to the (semi-)norm $\| \cdot \|_{\infty}$.

Note that, by the representation theorem for $\Cstar$-categories (Prop.\,\ref{prop:GNS}), in order to compute $\|\cdot \|_{\infty}$ it suffices to consider $*$-functors $B\to \hilb$. Also, the canonical $*$-functor $\rho_B$ is faithful if and only if there exists a faithful $*$-functor into any $\Cstar$-category, if and only if there exists a faithful $*$-functor $B\to \hilb$.
\end{example}

We now give a modest example of a universal $\Cstar$-category that is \emph{not} defined by algebraic relations. 
It will be needed to prove Proposition~\ref{prop:cof_gen}.

\begin{example} \label{ex:one_arrow}
Let $Q= \{a \colon 0\to 1\}$ be the quiver with one single arrow between two distinct objects, and consider the single relation $R= \{ \| a \| \leq 1\}$. It is immediately verified that $(Q,R)$ is admissible, and we shall denote the resulting universal $\Cstar$-category by~$\mathbf 1$.
 Note that $\mathbf 1$ has the property that, for any $\Cstar$-category~$A$, $*$-functors $F\colon \mathbf 1 \to A$ correspond bijectively to arrows $a\in A$ of norm at most equal to one. Since there exist operators of norm one, it follows that indeed $\|a\|=1$ in~$\mathbf 1$. For similar reasons, we see that $a^*a\neq1_0$ and $aa^*\neq1_1$.
\end{example}

Our next application is to show the cocompleteness of~$\Cstarcatun$.

\begin{lemma}\label{lemma:star-colims}  
The category of small $*$-categories and $*$-functors between them has all small colimits.
\end{lemma}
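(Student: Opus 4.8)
The plan is to reduce the statement to the existence of small coproducts and of coequalizers of parallel pairs, from which all small colimits can be built by the standard formula. Coproducts are essentially trivial: the coproduct $\coprod_i A_i$ of a small family of $*$-categories has object set $\coprod_i \obj A_i$, and its Hom spaces are $A_i(x,y)$ when $x$ and $y$ lie in the same summand $A_i$ and $0$ otherwise, equipped with the componentwise operations. That this is a $*$-category and enjoys the correct universal property is immediate.

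The real content is coequalizers, which I would construct by generators and relations, in the spirit of construction~\ref{constr:free}. Given $F,G: A \rightrightarrows B$, first let $\approx$ be the equivalence relation on $\obj B$ generated by $Fx \approx Gx$ for all $x\in \obj A$, with quotient $q:\obj B \to \obj B/{\approx}=:\obj C$. Define a quiver $\bar Q$ with object set $\obj C$ and with the arrows of $B$ as generators, an arrow $b\in B(x,y)$ being regarded as an arrow of $\bar Q(q(x),q(y))$; note that arrows which were not composable in $B$ may become composable in $\bar Q$ once their endpoints are identified. Form the free $*$-category $\mathrm F(\bar Q)$ and let $C$ be its quotient by the smallest $\F$-linear, $*$-closed two-sided categorical ideal (equivalently, $*$-congruence) containing (a) all algebraic relations that already hold among the arrows of $B$, and (b) the relations $F(a)=G(a)$ for every arrow $a\in A$, which make sense in $\mathrm F(\bar Q)$ precisely because of the identifications encoded by $\approx$. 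Quotienting a $*$-category by such an ideal again yields a $*$-category, so $C$ is a $*$-category, and the canonical map $H:B\to C$ sending each arrow to its class is a $*$-functor satisfying $HF=HG$.

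For the universal property, suppose $P:B\to D$ is a $*$-functor with $PF=PG$. On objects, $PF=PG$ forces $P$ to be constant on $\approx$-classes, so it factors through $\obj C$; on morphisms I would set $\bar P(H(b)):=P(b)$ on the generators and extend by the $*$-category operations. This is well defined because $P$, being a $*$-functor on $B$, respects every relation in (a), and respects (b) exactly because $PF=PG$; it is the unique such $*$-functor since the $H(b)$ generate $C$. Hence $H$ is the coequalizer, and $\starcat$ has all small colimits.

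The main obstacle is the coequalizer, together with the bookkeeping forced by the object identifications: once $\approx$ is collapsed, $C$ acquires genuinely new morphisms — formal composites, adjoints and linear combinations of arrows of $B$ that were not composable beforehand — so $H$ is generally not surjective on Hom spaces, exactly as for coequalizers in $\mathsf{Cat}$. The point to verify with care is that the $*$-congruence generated by (a) and (b), realized as the intersection of all $*$-congruences containing these relations, is itself a $*$-congruence, and that the quotient Hom spaces $\mathrm F(\bar Q)(u,v)/N(u,v)$ are honest $\F$-vector spaces carrying a well-defined involution; granting this, universality follows formally, since the generated congruence is by construction the smallest one coequalizing $F$ and $G$. (One could also argue abstractly that the forgetful functor of construction~\ref{constr:free} is monadic over the cocomplete category of quivers with a finitary monad, whence cocompleteness is automatic; but the explicit construction fits better the constructive aims of this section.)
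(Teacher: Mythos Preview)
Your argument is correct and follows essentially the same strategy as the paper: reduce to coproducts (constructed identically) and coequalizers built by passing to a quotient quiver, forming the free $*$-category, and then modding out by the $*$-closed ideal generated by the relations of the original target category. The only organizational difference is that the paper identifies the arrows $F(a)\sim G(a)$ already at the quiver level (taking $D$ to be the coequalizer of the underlying quivers), whereas you keep the arrows of $B$ as distinct generators in $\bar Q$ and impose $F(a)=G(a)$ as extra relations~(b) in the ideal; the two quotients are visibly the same, and your version has the advantage of making the verification of the universal property more transparent.
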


\begin{proof} It is enough to construct coproducts and coequalizers.  Coproducts $\coprod_iB_i$ are easy: just take the disjoint union $\coprod_iB_i$ on objects, with Hom spaces
$(\coprod_i B_i)(x,y):= B_j(x,y)$ 
for $x,y\in B_j$ and 
$(\coprod_iB_i)(x,y):=0$ 
for $x \in B_j, y \in B_k$ and $j\neq k$. 
Then $\coprod_i B_i$ inherits a unique composition and involution from those of the $B_i$'s, so that the canonical inclusions $B_j\to \coprod_iB_i$ are $*$-functors.
Coequalizers are slightly trickier. Let $F_1,F_2\colon B\to C$ be any two parallel $*$-functors between $*$-categories, and denote by $Q$ the forgetful functor to quivers. Clearly if we quotient the quiver $QC$ by the relations
$$F_1(x)\sim F_2(x)\quad (x\in\obj B),\quad F_1(f)\sim F_2(f)\quad(f\in B),$$
we obtain a quiver $D$ which is the coequalizer, as a quiver, of $QF_1$ and~$QF_2$. Then the coequalizer of $F_1$ and $F_2$ is the small $*$-category $\mathrm F(D)/I$, the quotient of the free $*$-category on $D$ by the $*$-closed ideal generated by all the relations of~$C$.
\end{proof}

\begin{prop} \label{prop:cocomplete}
The category $\Cstarcatun$ has all small colimits.
\end{prop}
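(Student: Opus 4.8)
The plan is to reduce the statement to the existence of coproducts and coequalizers, and then to produce each of these by applying the enveloping $\Cstar$-category construction of example~\ref{ex:algebraic-relations} to the corresponding colimit of underlying $*$-categories, whose existence is guaranteed by lemma~\ref{lemma:star-colims}.

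Since a category is cocomplete as soon as it admits all small coproducts and all coequalizers, it suffices to treat these two cases. Given such a diagram in $\Cstarcatun$, I would first forget it down to a diagram of $*$-categories and form its colimit $B$ there, using the explicit descriptions of lemma~\ref{lemma:star-colims}: the disjoint union $B=\coprod_i A_i$ for a coproduct of a family $\{A_i\}$, and the quotient $B=\mathrm F(D)/I$ for the coequalizer of a parallel pair $f,g:C\rightrightarrows C'$. As $B$ is merely a $*$-category, the next step is to reflect it back into $\Cstarcatun$ via its enveloping $\Cstar$-category $\rho_B:B\to \mathrm{U}(B)$.

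The candidate colimit is $\mathrm{U}(B)$, with structure maps the composites of the colimit cocone legs with $\rho_B$; these are $*$-functors between $\Cstar$-categories, hence morphisms of $\Cstarcatun$. The universal property is then formal: any cocone in $\Cstarcatun$ over the diagram is in particular a cocone of $*$-categories, so it factors through a unique $*$-functor $B\to A$ into the target $\Cstar$-category $A$; since $A$ is a $\Cstar$-category, this in turn factors uniquely through $\rho_B$ by the defining property of the envelope, and the resulting $*$-functor $\mathrm{U}(B)\to A$ is automatically compatible with all the structure maps. Uniqueness follows from the density of $\rho_B$ together with the uniqueness already available at the level of $*$-categories.

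The main obstacle, and the only nonformal point, is to verify that $\mathrm{U}(B)$ actually exists, i.e.\ that the norm condition~\eqref{norm_condition} holds: $\|b\|_\infty<\infty$ for every arrow $b$ of $B$. The key input is that every $*$-functor between $\Cstar$-categories is norm-decreasing. For coproducts this is immediate, since each arrow of $B=\coprod_i A_i$ lies in a single summand $A_i$ and is thus sent by any $*$-functor out of $B$ to an element of norm at most $\|b\|_{A_i}<\infty$. For coequalizers the generating arrows of $B=\mathrm F(D)/I$ are images of arrows $c$ of $C'$, and any $*$-functor $B\to A$ sends such a generator to the image of $c$ under a $*$-functor $C'\to A$, of norm at most $\|c\|_{C'}$; propagating these uniform bounds through the recursive $*$-algebraic operations defining the arrows of $\mathrm F(D)$ (via the triangle inequality, submultiplicativity of the norm, and $\|c^*\|=\|c\|$) yields a finite bound on $\|b\|_\infty$ for every $b$, independent of the chosen $*$-functor. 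This secures~\eqref{norm_condition}, whence $\mathrm{U}(B)$ exists and the argument concludes.
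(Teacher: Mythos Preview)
Your proposal is correct and follows essentially the same strategy as the paper: form the colimit in $*$-categories (lemma~\ref{lemma:star-colims}), then pass to the enveloping $\Cstar$-category, verifying condition~\eqref{norm_condition} by exploiting that the composite $*$-functors from the original $\Cstar$-categories into any target $\Cstar$-category are automatically norm-decreasing. The only organizational difference is that you first reduce to coproducts and coequalizers and check each separately, whereas the paper treats an arbitrary small diagram $X$ in one pass, observing that every arrow of $\colim VX$ is an algebraic combination of arrows coming from the $X(i)$'s and bounding $\|\rho f\|$ uniformly by the corresponding combination of the norms $\|f_k\|_{X(i_k)}$.
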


\begin{proof} Let $X\colon \mathcal I\to \Cstarcatun$ be a small diagram. Let $\colim VX$ be the colimit of the diagram~$VX$, where $V$ is the forgetful functor to the category of small $*$-categories and $*$-functors. This exists by Lemma~\ref{lemma:star-colims}.
Now, by Example~\ref{ex:algebraic-relations} we only have to check~\eqref{norm_condition}, the boundedness of the supremum norm over all representations, in order for the enveloping $\Cstar$-category $\colim VX\to \mathrm{U}(\colim VX)$ to exist. It is then clear that this construction, together with the composite $*$-functors $X(i)\to \colim VX \to \mathrm{U}(\colim VX)$, enjoys the universal property of $\colim X$ in~$\Cstarcatun$.

Let $\rho\colon \colim VX\to A$ be any $*$-functor to some $\Cstar$-category. Let $f$ be an arrow of  $\colim VX$. By construction (see the proof of Lemma~\ref{lemma:star-colims}), $f$ is represented by some algebraic combination of arrows $f_k$ of the categories $X(i), i\in \obj \mathcal I$, where, as before, ``algebraic'' means that only finite linear combinations, compositions and adjoints are allowed. Call this combination $C(f_1,\ldots, f_n)$. Then the following holds, where $\tilde C$ indicates the obvious corresponding algebraic combination of the norms.
\begin{eqnarray*}
\|\rho f\|_A = \|\, \rho [C(f_1,\ldots,f_n)]\,\|_{A}&\leq& \tilde C(\, \|\rho[f_1]\|_A,\ldots,\|\rho[f_n]\|_A \,)\\
&\leq& \tilde C(\, \|f_1\|_{X(i_{1})},\ldots,\|f_n\|_{X(i_{n})}\,)=:c(f) 
\end{eqnarray*}
The first inequality is due to the triangle inequality and sub-mult\-iplic\-at\-iv\-ity of the norm and to the isometricity of the involution in~$A$. The second one holds because each $*$-functor $\smash{X(i)\to \colim VX\stackrel{\rho}{\to} A}$ starts and ends at $\Cstar$-categories and is therefore automatically norm-reducing. In particular, the bound $c(f)< \infty$ does not depend on the representation~$\rho$. Hence $\colim VX$ satisfies condition~\eqref{norm_condition}, and we are done.
\end{proof}

\begin{remark} 
While the inclusion $\Cstaralgun \hookrightarrow \Cstarcatun$, of (unital) $\Cstar$-algebras into $\Cstar$-cat\-eg\-ories is easily seen to preserve all limits, it does not preserve colimits for the obvious reasons, \emph{e.g.}: $\F \sqcup \F$ in $\Cstaralgun$ is the usual (unital) free product $\F* \F$, while $\F \sqcup \F$ in $\Cstarcatun$ has two objects and therefore is not even an algebra.
\end{remark}

%

\subsection{The maximal tensor product}

Given two $*$-categories $ A$ and $ B$, their \emph{algebraic tensor product} $ A\otimes_{\F} B$  is simply their tensor product as $\F$-categories
\[
\obj (A \otimes_{\F} B):= \obj A \times \obj B 
\quad,\quad
(A \otimes_{\F} B)((x,y),(x',y')) :=  A(x,x') \otimes_{\F} B(y,y')
\]
equipped with the involution  $(\sum_i z_ia_i\otimes b_i)^*:=\sum_i \overline{z_i}a_i^*\otimes b_i^*$. As in the case of $\Cstar$-algebras, if $A$ and $B$ are $\Cstar$-categories there are in general different ways to make the algebraic tensor product into a $\Cstar$-category.  One possibility is to complete with  respect to the spatial norm: Proposition~\ref{prop:GNS} provides faithful representations $\rho\colon A\to\hilb$ and $\sigma\colon B\to \hilb$, which we may combine to form a representation $\rho \otimes \sigma\colon A\otimes_{\F} B\to\hilb$ by sending $a\otimes b\colon (x,y)\to (x',y')$ to $\rho(a)\otimes \sigma(b) \in \mathcal L(\rho(x)\otimes \sigma(y), \rho(x')\otimes \sigma(y'))$ and extending linearly.
We can thus define  a norm
\begin{equation*}
\|f\|_{\min}:= \|(\rho \otimes \sigma)(f)\|_{\hilb}
\end{equation*}
on the algebraic tensor product $A\otimes_{\F}B$, which turns out to be independent of the choices of $\rho$ and~$\sigma$. This $\|\cdot \|_{\min}$ is a $\Cstar$-norm, and the corresponding completion $A \otimes_{\min} B$ of $A \otimes_{\F} B$ is called the \emph{minimal tensor product} of $A$ and~$B$ (see~\cite{mitchener:Cstar_cats}).

On the other hand, it is also possible to generalize the maximal tensor product of $\Cstar$-algebras:

\begin{prop}\label{prop:max} 
Let $A,B$ be two small $\Cstar$-categories.
The supremum norm \eqref{norm_condition} on the $*$-category $A \otimes_{\F} B$ is bounded, therefore the universal enveloping $\Cstar$-category of $A \otimes_{\F} B$ exists. 
Denote it by $A \otimes_{\max} B$ and call it the \emph{maximal tensor product of $A$ and~$B$}.
The canonical $*$-functor from $A \otimes_{\F} B$ into it is faithful, and the construction specializes to the usual maximal tensor product of \textup(unital\textup) $\Cstar$-algebras.

\end{prop}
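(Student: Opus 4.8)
The plan is to apply example~\ref{ex:algebraic-relations} to the $*$-category $A \otimes_\F B$: by that example the universal enveloping $\Cstar$-category $\mathrm U(A \otimes_\F B)$ exists as soon as the supremum seminorm \eqref{norm_condition} is finite on every arrow, and we would then simply set $A \otimes_\max B := \mathrm U(A \otimes_\F B)$. Conditions (1)--(3) and~(5) of admissibility are granted for algebraic relations by that same example, so the whole of the existence statement reduces to bounding \eqref{norm_condition}, which is the one step carrying any idea.

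To bound the supremum norm, I would first note that for each \emph{fixed} object $y \in \obj B$ the assignment $a \mapsto a \otimes 1_y$ is a $*$-functor $\iota_y : A \to A \otimes_\F B$, and symmetrically $b \mapsto 1_x \otimes b$ is a $*$-functor $\kappa_x : B \to A \otimes_\F B$; both are immediate from the coordinatewise definition of the operations on $A \otimes_\F B$. Now let $\rho : A \otimes_\F B \to C$ be an arbitrary $*$-functor to a $\Cstar$-category, and write a given arrow as a finite sum $f = \sum_i a_i \otimes b_i$ of parallel elementary tensors, with $a_i \in A(x,x')$ and $b_i \in B(y,y')$. Exploiting the factorization $a \otimes b = (a \otimes 1_{y'})(1_x \otimes b)$, valid in $A \otimes_\F B$, I obtain $\rho(a_i \otimes b_i) = (\rho\iota_{y'})(a_i)\,(\rho\kappa_{x})(b_i)$. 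Since $\rho\iota_{y'} : A \to C$ and $\rho\kappa_x : B \to C$ are $*$-functors \emph{between $\Cstar$-categories}, they are automatically norm-decreasing (recalled in the subsection on basic properties), whence $\|\rho(a_i \otimes b_i)\| \le \|a_i\|_A\,\|b_i\|_B$. Summing and using the triangle inequality gives the representation-independent bound
\[
\|\rho(f)\| \;\le\; \sum_i \|a_i\|_A\,\|b_i\|_B \;<\; \infty,
\]
so that $\|f\|_\infty \le \sum_i \|a_i\|_A\,\|b_i\|_B < \infty$. This verifies \eqref{norm_condition}, and the existence of $A \otimes_\max B$ follows.

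For faithfulness of the canonical $*$-functor $A \otimes_\F B \to A \otimes_\max B$, I would compare the maximal seminorm with the spatial norm $\|\cdot\|_{\min}$ discussed immediately above the proposition. The spatial representation $\rho \otimes \sigma$, built from faithful representations $\rho, \sigma$ of $A$ and $B$ furnished by proposition~\ref{prop:GNS}, is one of the $*$-functors appearing in the supremum defining $\|\cdot\|_\infty$, so $\|f\|_{\min} \le \|f\|_\infty$ for every $f$. Since $\|\cdot\|_{\min}$ is a genuine $\Cstar$-norm, its kernel is trivial, hence so is the kernel of $\|\cdot\|_\infty$; this says exactly that the canonical $*$-functor is faithful. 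Equivalently, one may invoke the criterion at the end of example~\ref{ex:algebraic-relations}, since $\rho \otimes \sigma$ is itself a faithful representation $A \otimes_\F B \to \hilb$ (a tensor product of injective $\F$-linear maps is injective, and $\mathcal L(H) \otimes_\F \mathcal L(K) \hookrightarrow \mathcal L(H \otimes K)$).

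Finally, to see that the construction specializes to the classical maximal tensor product of unital $\Cstar$-algebras, I would take $A$ and $B$ with a single object each. Then $A \otimes_\F B$ is a one-object $*$-category, i.e.\ a $*$-algebra, and a $*$-functor $A \otimes_\F B \to C$ into a $\Cstar$-category amounts to the choice of an object $c \in \obj C$ together with a $*$-homomorphism into the endomorphism $\Cstar$-algebra $C(c,c)$, the relevant norm being computed in $C(c,c)$. As the algebras $C(c,c)$ range over all $\Cstar$-algebras (example~\ref{ex:Cstarcats}(a)), the supremum \eqref{norm_condition} coincides with the usual supremum over all $*$-homomorphisms to $\Cstar$-algebras that defines the maximal $\Cstar$-algebra tensor norm, so $\mathrm U(A \otimes_\F B)$ is precisely the one-object $\Cstar$-category attached to the classical $A \otimes_\max B$. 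I expect the boundedness step to be the only genuine obstacle: the factorization $a \otimes b = (a \otimes 1)(1 \otimes b)$ combined with the automatic norm-decrease of $*$-functors does all the work, while faithfulness and the $\Cstar$-algebra specialization are bookkeeping.
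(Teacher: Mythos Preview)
Your proof is correct and follows essentially the same route as the paper: the boundedness of $\|\cdot\|_\infty$ is obtained via the factorization $a\otimes b=(a\otimes 1)(1\otimes b)$ together with the automatic norm-decrease of the composite $*$-functors $\rho\iota_{y'}$ and $\rho\kappa_x$, and faithfulness is deduced from the existence of the faithful spatial representation (equivalently, the embedding $A\otimes_\F B\hookrightarrow A\otimes_{\min}B$). The specialization to $\Cstar$-algebras is handled just as in the paper, by observing that the one-object case recovers the usual universal property of the maximal tensor product.
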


\begin{proof} Let us check that $\|f\|_{\infty}$ is finite for each arrow $f \in A\otimes_{\F}B$. 
For every object $x \in\obj  A$, we can define a $*$-functor $J_x\colon B\to A\otimes_{\F} B$  by sending $b\colon y\to y'$ to $1_x\otimes b\colon (x,y)\to(x,y')$. In the same way we can define a $*$-functor $I_y\colon A\to A\otimes_{\F} B$ for every choice of $y \in\obj B$. 
Let $F$ be any representation of $ A\otimes_{\F} B$. 
Then the compositions $F I_y$ and $F J_x$ are $*$-functors between $\Cstar$-categories, and therefore they are norm-decreasing.
Using this fact and the commutative triangles 
\[
\xymatrix{
(x,y) \ar[dr]_{a_i \otimes b_i} \ar[r]^-{a_i \otimes 1_y} & 
 (x',y) \ar[d]^{1_{x'} \otimes b_i} \\
 &
 (x',y')
}
\]
in $A\otimes_{\F} B$, we obtain for every morphism  
$f=\sum_i z_ia_i\otimes b_i \in (A\otimes_{\F} B) ((x,y),(x',y'))$
\begin{eqnarray*}
\left\|F \left(\sum_i z_ia_i\otimes b_i \right) \right\|  
&\leq&
  \sum_i |z_i| \| F(a_i \otimes  b_i) \| \\
&= &
 \sum_i |z_i| \| F(1_{x'} \otimes b_i \, \cdot \,  a_i \otimes 1_y ) \| \\
&= &
  \sum_i |z_i| \| F(1_{x'}\otimes b_i) \cdot  F(a_i\otimes 1_{y})  \|    \\
&\leq&
  \sum_i |z_i|  \|FJ_{x'}(b_i)\|  \|FI_y (a_i)\|  \\
& \leq&  \sum_i |z_i| \|b_i\| \|a_i\|  , 
\end{eqnarray*} 
which is independent of~$F$. Therefore $\|f\|_{\infty} < \infty$. Thus the universal enveloping $\Cstar$-algebra for $A\otimes_{\F} B$ exists, as in Example~\ref{ex:algebraic-relations}. Moreover, since there exists a faithful representation (\emph{e.g.}, $A \otimes_{\F}B \hookrightarrow A \otimes_{\min} B$), the canonical map $A \otimes_{\F}B \to A \otimes_{\max} B$ is faithful. 
The last claim of the proposition is clear because, if $A$ and $B$ are $\Cstar$-algebras, then the $A\otimes_{\max} B$ just defined is a $\Cstar$-algebra satisfying the universal property of the maximal tensor product of the $\Cstar$-algebras $A$ and~$B$.
\end{proof}

\begin{remark} \label{rem:nuclear}
As with $\Cstar$-algebras, we may call a $\Cstar$-category \emph{nuclear} if the comparison $*$-functor $A\otimes_{\max} B \to A \otimes_{\min} B$ is an isomorphism for all $\Cstar$-categories~$B$. Presumably, we should expect nuclearity  to play a fundamental role in the general theory of $\Cstar$-categories, similar to the role of nuclearity in the general theory of $\Cstar$-algebras -- but we have not explored this line of thought yet.
\end{remark}

\begin{lemma} \label{lemma:adjunction_Cstar}
Let $C$ be any $\Cstar$-category, and let $A,B$ be small $\Cstar$-categories. Then the usual exponential law for \textup(ordinary\textup) categories induces an isomorphism
\[
\Cstar(A\otimes_{\max} B, C) \simeq \Cstar( A, \Cstar (B,C))
\]
of $\Cstar$-categories.
\end{lemma}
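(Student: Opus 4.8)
The plan is to exhibit the isomorphism objectwise and then observe that it is automatically isometric on the Hom spaces. Recall that $A\otimes_{\max}B$ is by construction (proposition~\ref{prop:max}) the enveloping $\Cstar$-category of the algebraic tensor product $A\otimes_{\F}B$, so by the universal property of the enveloping construction (example~\ref{ex:algebraic-relations} together with theorem~\ref{thm:univ}) the objects of $\Cstar(A\otimes_{\max}B,C)$ --- that is, the $*$-functors $A\otimes_{\max}B\to C$ --- are in natural bijection with the $*$-functors $A\otimes_{\F}B\to C$ out of the \emph{algebraic} tensor product. On the latter I would apply the usual exponential law for $\F$-linear $*$-functors to match them with $*$-functors $A\to\Cstar(B,C)$, which are the objects of the right-hand side, the boundedness of the resulting natural transformations being automatic as explained below.

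Concretely, given $F\colon A\otimes_{\max}B\to C$ I set $\Phi(F)(x):=F\circ J_x$ for each $x\in\obj A$, where $J_x\colon B\to A\otimes_{\max}B$ is the $*$-functor $b\mapsto 1_x\otimes b$ from the proof of proposition~\ref{prop:max}; and for an arrow $a\colon x\to x'$ of $A$ I let $\Phi(F)(a)$ be the transformation with components $\Phi(F)(a)_y:=F(a\otimes 1_y)$. The commutative triangles recorded in that proof, giving $(1_{x'}\otimes b)\cdot(a\otimes 1_y)=a\otimes b=(a\otimes 1_{y'})\cdot(1_x\otimes b)$, show that $\Phi(F)(a)$ is natural, and since $F\circ I_y$ is a $*$-functor between $\Cstar$-categories, hence norm-decreasing, one gets $\|\Phi(F)(a)\|_{\infty}=\sup_y\|F(a\otimes 1_y)\|\le\|a\|$, so $\Phi(F)(a)$ is a \emph{bounded} natural transformation and $\Phi(F)$ does land in $\Cstar(B,C)$. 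The inverse $\Psi$ sends $G\colon A\to\Cstar(B,C)$ first to the $*$-functor $A\otimes_{\F}B\to C$ determined on arrows by $a\otimes b\mapsto G(x')(b)\circ G(a)_y$ (well defined by naturality of $G(a)$), and then extends it uniquely over $A\otimes_{\max}B$ by the universal property of the enveloping $\Cstar$-category; checking $\Psi\Phi=\id$ and $\Phi\Psi=\id$ on $A\otimes_{\F}B$ and invoking density closes the object-level bijection.

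It then remains to promote this bijection to an isomorphism of $\Cstar$-categories, i.e. to match the bounded natural transformations on both sides isometrically. A bounded natural transformation $\theta\colon F\to F'$ on the left has components $\theta_{(x,y)}$ indexed by $\obj A\times\obj B=\obj(A\otimes_{\max}B)$, and I would send it to the family $\tilde\theta$ with $(\tilde\theta_x)_y:=\theta_{(x,y)}$. Naturality of $\tilde\theta_x$ in $y$ and of $\tilde\theta$ in $x$ are exactly the naturality squares of $\theta$ at the arrows $1_x\otimes b$ and $a\otimes 1_y$ respectively, and the assignment is visibly $\F$-linear and $*$-preserving, since all operations in both internal Hom's are defined componentwise. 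The crucial point is the norm identity
\[
\|\tilde\theta\|_{\infty}=\sup_{x}\sup_{y}\|\theta_{(x,y)}\|=\sup_{(x,y)}\|\theta_{(x,y)}\|=\|\theta\|_{\infty},
\]
which makes $\theta\mapsto\tilde\theta$ an isometric bijection and thereby completes the proof.

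The individual steps are routine bookkeeping; the one point requiring genuine care --- and the place where the use of the \emph{maximal} rather than the minimal tensor product is essential --- is the object-level bijection. There I must invoke the universal property of the enveloping $\Cstar$-category to reduce $*$-functors out of $A\otimes_{\max}B$ to $*$-functors out of the algebraic $A\otimes_{\F}B$, and at the same time verify that boundedness of natural transformations is preserved in both directions, so that the exponential law keeps us inside the bounded part $\Cstar(B,C)$.
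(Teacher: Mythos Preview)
Your argument is correct and follows essentially the same route as the paper: both reduce to the algebraic tensor product via the universal property of $\otimes_{\max}$, define $\Phi$ and $\Psi$ by the same formulas, and bound $\|\Phi F(a)\|_\infty$ via the norm-decreasing $*$-functors $FI_y$. In fact you go slightly further than the paper, which explicitly leaves the Hom-level isometry (your double-supremum identity) to the reader, deferring it as a formal consequence of the closed monoidal structure established afterwards.
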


\begin{proof}
It is an exercise (and a basic results of enriched category theory) to verify that the exponential law for categories upgrades to $\F$-categories, in the form of an $\F$-linear isomorphism
\begin{equation} \label{basic_F_lin}
\Phi\colon \inthom_{\F}(A \otimes_{\F} B , C) \simeq \inthom_{\F} (A, \inthom_{\F} (B,C))
\end{equation}
 ($\inthom_{\F}$ denotes the $\F$-category of $\F$-linear functors and natural transformations bet\-ween them).
Under~$\Phi$, an $\F$-functor $F\colon A\otimes_{\F} B \to C $ is sent to the $\F$-functor $\Phi F\colon A \to \inthom_{\F}(B,C)$ defined as follows. 
On objects, $\Phi F$ sends $x\in \obj A$ to $F( 1_x \otimes - )\colon B\to C$. 
An arrow $a\colon x\to x'$ is sent to the natural transformation 
$\Phi F(a)= F( a \otimes - ){:}\break F( 1_x \otimes - ) \to F( 1_{x'} \otimes - )$. 
The inverse $\Psi$ of $\Phi$ takes a functor $G\colon A \to \inthom_{\F}(B,C)$ and assigns to it the unique $\F$-functor $\Psi G\colon A\otimes_{\F} B\to C$ sending  $a \otimes b \colon (x,y) \to (x',y') $
to 
$\Psi G (a \otimes b):=  G(a)_{y'} \cdot G(x)(b) = G(x')(b) \cdot G(a)_y$.

Note that, if $D,E$ are two $*$-categories, the full subcategory of $\inthom_{\F}(D,E)$ of $*$-functors (let us denote it by $\inthom_*(D,E)$) is also a $*$-category for the pointwise involution $\alpha^*(f):= \alpha(f)^*$ on natural transformations. 
It is straightforward to verify that, if $A,B,C$ are $*$-categories, \eqref{basic_F_lin} restricts to an isomorphism of $*$-categories
\begin{equation} \label{*-cats}
\Phi: \inthom_*(A \otimes_{\F} B , C) \simeq \inthom_* (A, \inthom_* (B,C)) : \Psi.
\end{equation}
If $D,E$ are moreover $\Cstar$-categories, then $\Cstar(D,E)$ is the (possibly non-full) subcategory of $\inthom_*(D,E)$ on the same objects, where morphisms are bounded transformations.
Now let $A,B,C$ be $\Cstar$-categories. 

If $F\colon A\otimes_{\max} B\to C$ is a $*$-functor, we may restrict it to $A\otimes_{\F} B$ and produce a $*$-functor 
$\Phi F \colon A\to  \inthom_*(B,C)$. 
We claim that $\Phi F$ is actually a functor $A\to \Cstar(B,C)$. Indeed, each arrow $a\in A(x,x')$ is sent to the natural transformation $\Phi F(a)$ with components $\Phi F(a)_y= F(a \otimes 1_y)$ ($y\in \obj B$). 
Thus we see that
$\| \Phi F (a) \|_{\infty}$ $=$ $\sup_y \| F(a \otimes 1_y) \|_C = \sup_y \| F I_y (a) \| \leq \|a\|_A $, 
again using the observation that each $*$-functor $FI_y\colon A \to A\otimes_{\max} B \to C$ is norm-decreasing, with the same notation $I_y$ as in the proof of Prop.\,\ref{prop:max}. This proves the claim.

On the other hand, every $*$-functor $A \to \Cstar(B,C)$ gives rise to
a $*$-functor $\Psi G{:}\break A \otimes_{\F} B\to C$ and, since $C$ is a $\Cstar$-category, this extends uniquely to a $*$-functor $A \otimes_{\max} B \to C$, which we unflinchingly still denote by $\Psi G$.
Thus $\Phi$ and $\Psi$ define mutually inverse bijections
\[
\Phi\colon \obj \inthom_*(A \otimes_{\max} B , C) \simeq \obj \inthom_* (A, \Cstar (B,C)) : \Psi.
\]
Since this is all we need for the next theorem, we leave to the reader the straightforward verification that $\Phi$ and $\Psi$ extend to an isomorphism of $\Cstar$-categories as claimed (actually, this will also be a formal consequence of Theorem~\ref{thm:closed_monoidal_Cstarcatun}).
\end{proof}

\begin{thm} \label{thm:closed_monoidal_Cstarcatun}
The maximal tensor product $\otimes_{\max}$ endows $\Cstarcatun$ with the structure of a closed symmetric monoidal category, 
with unit object $\F$ and internal Hom's the $\Cstar$-categories $\Cstar(A,B)$. 
\end{thm}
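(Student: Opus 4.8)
The plan is to transport the well-understood symmetric monoidal structure of the algebraic tensor product $\otimes_{\F}$ on small $*$-categories across the enveloping construction $\mathrm{U}$, using the adjunction of lemma~\ref{lemma:adjunction_Cstar} precisely at the place where a naive transport breaks down. First I would record that $\otimes_{\F}$ makes the category of small $*$-categories into a symmetric monoidal category with unit $\F$: the associativity, symmetry and unit isomorphisms are inherited objectwise from those of $\F$-vector spaces, and their coherence (pentagon, hexagon, triangle) is the standard statement of enriched category theory. Since $A\otimes_{\max}B=\mathrm{U}(A\otimes_{\F}B)$ and, by proposition~\ref{prop:max}, $A\otimes_{\F}B$ embeds faithfully and densely in $A\otimes_{\max}B$, bifunctoriality of $\otimes_{\max}$ is immediate: a pair of $*$-functors $A\to A'$, $B\to B'$ yields $A\otimes_{\F}B\to A'\otimes_{\F}B'\to A'\otimes_{\max}B'$, a $*$-functor into a $\Cstar$-category, which factors uniquely through $A\otimes_{\max}B$ by the universal property of the enveloping $\Cstar$-category.

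The symmetry and unit isomorphisms transport without difficulty, as they involve no iteration of the completion: $A\otimes_{\F}B\cong B\otimes_{\F}A$ induces $A\otimes_{\max}B\cong B\otimes_{\max}A$ by uniqueness, while $\F\otimes_{\F}A\cong A$ together with the fact that $\mathrm{U}(A)=A$ for a $\Cstar$-category $A$ (the identity is a faithful, hence isometric, representation, and every $*$-functor is norm-decreasing, so the supremum norm coincides with the given one) gives $\F\otimes_{\max}A\cong A$. The unit object is thus $\F$, and closedness together with the identification of the internal Hom's is precisely lemma~\ref{lemma:adjunction_Cstar}, whose object-level statement reads $\Cstarcatun(A\otimes_{\max}B,C)\cong\Cstarcatun(A,\Cstar(B,C))$ naturally in all variables; that $\Cstar(\F,B)\cong B$ is consistent with example~\ref{ex:Cstarcats}(b).

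The associativity isomorphism is the one genuinely delicate point, and I expect it to be the main obstacle: because $(A\otimes_{\max}B)\otimes_{\max}C$ is built by completing twice, it is not literally the enveloping $\Cstar$-category of the triple algebraic tensor product, so one cannot simply apply $\mathrm{U}$ to the associator of $\otimes_{\F}$. I would sidestep this by a Yoneda argument driven by the adjunction. Iterating lemma~\ref{lemma:adjunction_Cstar} gives, naturally in $D$,
\[
\Cstarcatun\bigl((A\otimes_{\max}B)\otimes_{\max}C,\,D\bigr)
\cong \Cstarcatun\bigl(A,\Cstar(B,\Cstar(C,D))\bigr)
\cong \Cstarcatun\bigl(A\otimes_{\max}(B\otimes_{\max}C),\,D\bigr),
\]
where on the right I also invoke the $\Cstar$-category isomorphism $\Cstar(B\otimes_{\max}C,D)\cong\Cstar(B,\Cstar(C,D))$ of lemma~\ref{lemma:adjunction_Cstar} itself. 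The Yoneda lemma in $\Cstarcatun$ then produces a natural associativity isomorphism.

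Finally, for the coherence conditions I would argue by density. Each of the structural isomorphisms for $\otimes_{\max}$ restricts, on the dense sub-$*$-categories given by the algebraic tensor products, to the corresponding structural isomorphism for $\otimes_{\F}$. Since the pentagon, hexagon and triangle identities commute for $\otimes_{\F}$, and since a $*$-functor between $\Cstar$-categories is determined by its restriction to a dense sub-$*$-category (continuity), the same diagrams commute for $\otimes_{\max}$. This yields the closed symmetric monoidal structure with unit $\F$ and internal Hom's $\Cstar(A,B)$.
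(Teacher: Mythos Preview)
Your argument is correct and largely parallels the paper's, but with one genuine difference worth noting. The paper handles all structural isomorphisms---associativity included---uniformly, asserting that the $\otimes_{\F}$ associator induces one for $\otimes_{\max}$ directly via the universal property of the maximal tensor product, and that the coherence axioms carry over. You instead single out associativity as the delicate point and obtain it by a Yoneda argument, iterating the adjunction of lemma~\ref{lemma:adjunction_Cstar}. Your worry about the ``double completion'' is well taken: the paper's phrase ``induce similar isomorphisms'' implicitly relies on the fact that both $(A\otimes_{\max}B)\otimes_{\max}C$ and $A\otimes_{\max}(B\otimes_{\max}C)$ are enveloping $\Cstar$-categories of the triple algebraic tensor product (equivalently, that a $*$-functor from the triple algebraic tensor into a $\Cstar$-category extends uniquely through either bracketing), which is true but not stated. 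Your Yoneda route sidesteps this entirely and is arguably cleaner.

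One small point you should make explicit: your coherence argument by density requires that the Yoneda-derived associator restricts, on the dense algebraic triple tensor, to the $\otimes_{\F}$ associator $(a\otimes b)\otimes c\mapsto a\otimes(b\otimes c)$. This is true---it follows by unwinding the explicit $\Phi$ and $\Psi$ in the proof of lemma~\ref{lemma:adjunction_Cstar}---but since the associator was not \emph{defined} by transport from $\otimes_{\F}$, it does need to be checked. Alternatively, you could verify the pentagon and hexagon directly by Yoneda as well, applying $\Cstarcatun(-,D)$ and checking commutativity of the resulting diagrams of bijections; this would keep the whole argument internal to the adjunction and avoid the density step.
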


\begin{proof}
Using the universal property of the maximal tensor product, it is straightforward routine to verify that it defines a functor $-\otimes_{\max}-\colon \Cstarcatun \times \Cstarcatun \to \Cstarcatun$, and that the structural associativity, left and right identity, and symmetry isomorphisms of the symmetric monoidal structure $\otimes_{\F}$ of small $\F$-categories induce similar isomorphisms for~$\otimes_{\max}$, which again satisfy the axioms for a symmetric monoidal category with unit object~$\F$.
The internal Hom construction induces via composition of $*$-functors a functor $\Cstar(-,-)\colon \Cstarcatun^{\op} \times \Cstarcatun \to \Cstarcatun$, and 
the bijection 
\[
\Cstarcatun (A \otimes_{\max} B, C) \simeq \Cstarcatun (A, \Cstar(B,C))
\]
of Lemma~\ref{lemma:adjunction_Cstar}, which is readily seen to be natural in $A,B,C\in \Cstarcatun$, shows that the monoidal structure is closed with internal Hom given by $\Cstar(-,-)$.
\end{proof}

\subsection{The maximal groupoid $\Cstar$-category}
\label{subsec:max_gpd}
In order to define the simplicial structure on $\Cstarcatun$ we shall need the maximal (or ``full'') groupoid $\Cstar$-category $\Cstarmax \mathcal G$ associated to a small discrete groupoid. We recall its construction from~\cite{mitchener:Cstar_cats}*{Def.\,5.10}, but here we shall rather emphasize the universal property it enjoys. With this perspective, we can show that it transforms products of groupoids into maximal tensor products: 
$\Cstarmax(\mathcal G_1 \times \mathcal G_2)\simeq \Cstarmax \mathcal G_1 \otimes_{\max} \Cstarmax \mathcal G_2$.

\begin{definition} \label{defi:max_gpd}
The \emph{\textup(maximal\textup) groupoid $\Cstar$-category} associated to a groupoid\footnote{All groupoids in this article are discrete.}~$\mathcal G$ is  the universal functor $\rho_{\mathcal G}\colon \mathcal G \to \Cstarmax \mathcal G$ mapping all arrows of $\mathcal G$ to unitary elements of a  $\Cstar$-category.
Clearly this defines a functor $\Cstarmax\colon \Gpd\to \Cstarcatun$ on the category of small groupoids and functors between them, which sends equivalences of groupoids to unitary equivalences of $\Cstar$-categories.
\end{definition}

To see that the universal notion just described actually exists, we can realize it as the enveloping $\Cstar$-category 
(see Example~\ref{ex:algebraic-relations})
of the $*$-category $\F\mathcal G$ with
\[
(\F \mathcal G)(x,x') := \F  \mathcal G(x,x') \quad (\textrm{free } \F\textrm{-module})
\quad,\quad
 \left(\sum_i z_i g_i \right)^*:= \sum_i \overline{z_i} \, g_i^{-1}
\]
for  $x,x'\in \obj \mathcal G$ and $\sum_i z_i g_i \in \F\mathcal G(x,x')$.
(This $\F\mathcal G$ is the \emph{groupoid category} of~\cite{mitchener:Cstar_cats}*{Def.\,5.4}.)
Thus it suffices to verify that $\|f \|_{\infty}= \sup_{\rho} \|\rho (f) \|$, with supremum taken over all $*$-functors $\rho \colon \F\mathcal G\to \hilb$, is finite for every arrow $f\in \F\mathcal G$. Indeed we have
$\|\rho (\sum_i z_i g_i) \| 
\leq \sum_i |z_i| \| \rho(g_i) \| 
\leq \sum_i |z_i| < \infty$,
because unitaries have norm one or zero (the latter happens if the unitary's domain and codomain have zero endomorphism algebras).

\begin{remark}
The completion of $\F\mathcal G$ with respect to another, more concretely defined norm $\|\cdot \|_r$, produces the \emph{reduced} groupoid $\Cstar$-category~$\mathrm C^*_r \mathcal G$, which generalizes the reduced $\Cstar$-algebra of a group
(see~\cite{mitchener:Cstar_cats}*{Def.\,5.8}). Since the canonical $*$-functor $\F\mathcal G\to \mathrm C^*_r \mathcal G$ is by construction a faithful representation, we conclude that the canonical $*$-functor $\F\mathcal G \to \Cstarmax \mathcal G $ is also faithful. 
\end{remark}

For any $\Cstar$-category~$A$, denote by $\unitaries A$ its subcategory of unitary elements.

It follows immediately from Definition~\ref{defi:max_gpd} that composition with the faithful functor 
$\rho_{\mathcal G}\colon \mathcal G\to \unitaries \Cstarmax \mathcal G$ induces a natural bijection
\begin{equation}  \label{general_bij}
\Cstarcatun(\Cstarmax \mathcal G, A)\simeq \Gpd (\mathcal G, \unitaries A)
\end{equation}
for any small groupoid $\mathcal G$ and any, possibly large, $\Cstar$-category~$A$. In particular:

\begin{prop} \label{prop:Cstar_adjoint}
The functor $\unitaries \colon \Cstarcatun \to \Gpd$, assigning to a $\Cstar$-category~$A$ its groupoid $\unitaries A$ of unitary elements, has a left adjoint $\Cstarmax \colon \Gpd \to \Cstarcatun$. 
\qed
\end{prop}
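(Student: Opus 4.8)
The plan is to read off the adjunction directly from the natural bijection \eqref{general_bij} established immediately above. By the standard hom-set characterization of adjoint functors, a pair $(\Cstarmax, \unitaries)$ constitutes an adjunction $\Cstarmax \dashv \unitaries$ precisely when there is a family of bijections $\Cstarcatun(\Cstarmax \mathcal G, A) \simeq \Gpd(\mathcal G, \unitaries A)$ natural in both $\mathcal G \in \Gpd$ and $A \in \Cstarcatun$. This bijection is exactly \eqref{general_bij}, obtained by composing with the universal functor $\rho_{\mathcal G}: \mathcal G \to \unitaries \Cstarmax \mathcal G$ of Definition \ref{defi:max_gpd}, so the only thing left to verify is the double naturality.

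For naturality in $A$, I would take a $*$-functor $\varphi: A \to A'$ and note that it induces $\unitaries \varphi : \unitaries A \to \unitaries A'$, which is well-defined since $*$-functors preserve unitary arrows. The relevant square commutes because, on both sides of \eqref{general_bij}, the action of $\varphi$ is simply postcomposition of underlying functors. For naturality in $\mathcal G$, I would take a groupoid morphism $\psi: \mathcal G' \to \mathcal G$, use the functoriality of $\Cstarmax$ recorded in Definition \ref{defi:max_gpd} to obtain $\Cstarmax \psi$, and observe that the corresponding square commutes by the uniqueness half of the universal property: both composites are $*$-functors out of $\Cstarmax \mathcal G'$ whose restrictions along $\rho_{\mathcal G'}$ agree, hence they coincide.

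There is no genuine obstacle here, since the entire content lies in \eqref{general_bij}; the naturality checks are formal consequences of the universal property of $\rho_{\mathcal G}$ together with the fact that a $*$-functor out of $\Cstarmax \mathcal G$ is uniquely determined by its restriction to the generating groupoid $\mathcal G$. Invoking the hom-set definition of an adjunction then yields $\Cstarmax \dashv \unitaries$, as claimed.
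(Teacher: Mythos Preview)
Your proposal is correct and follows exactly the paper's approach: the proposition is stated with a \qed\ immediately after noting the natural bijection~\eqref{general_bij}, so the paper treats the adjunction as an immediate consequence of that bijection. Your explicit verification of naturality in both variables is a welcome elaboration of what the paper leaves implicit.
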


The next lemma exhibits a 2-categorical upgrade of the universal property of the groupoid $\Cstar$-category. 
Here we denote by $\underline{\mathrm{Hom}}(C,D)$ the category of functors between two categories $C,D$.

\begin{lemma} \label{lemma:UP_max}
For every $\Cstar$-category $A$ and small groupoid $\mathcal G$, composition with $\rho_{\mathcal G}{:}\break\mathcal G\to \unitaries  \Cstarmax \mathcal G$ induces an isomorphism of groupoids
\[
\unitaries \Cstar(\Cstarmax \mathcal G, A )
\simeq \underline{\mathrm{Hom}} (\mathcal G, \unitaries A).
\]
\end{lemma}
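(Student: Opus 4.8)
We want to show that precomposition with $\rho_{\mathcal G}$ induces an isomorphism of groupoids
\[
(-)\circ \rho_{\mathcal G}:\unitaries \Cstar(\Cstarmax \mathcal G, A)\longrightarrow \underline{\mathrm{Hom}}(\mathcal G, \unitaries A).
\]
The strategy is to check that this functor is a bijection on objects and a bijection on Hom sets (fully faithful), and that both the map and its inverse are functorial; since the source is a groupoid, this suffices to make it an isomorphism of groupoids. The object level is already done for us: the objects of $\unitaries \Cstar(\Cstarmax \mathcal G, A)$ are precisely the $*$-functors $\Cstarmax \mathcal G\to A$, the objects of $\underline{\mathrm{Hom}}(\mathcal G,\unitaries A)$ are the functors $\mathcal G\to \unitaries A$, and precomposition with $\rho_{\mathcal G}$ is exactly the natural bijection~\eqref{general_bij} of the universal property of $\Cstarmax \mathcal G$. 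So I would open by invoking \eqref{general_bij} to settle the bijection on objects.

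\emph{The morphism level is the heart of the argument.} A morphism in $\unitaries \Cstar(\Cstarmax\mathcal G,A)$ from $F$ to $F'$ is a unitary element of the $\Cstar$-category $\Cstar(\Cstarmax\mathcal G,A)$, that is, a bounded natural transformation $\alpha:F\to F'$ all of whose components $\alpha_u\in A(Fu,F'u)$ (for $u\in\obj\Cstarmax\mathcal G$) are unitary. A morphism in $\underline{\mathrm{Hom}}(\mathcal G,\unitaries A)$ from $F\rho_{\mathcal G}$ to $F'\rho_{\mathcal G}$ is just a natural transformation $\beta$ between the composite functors $\mathcal G\to\unitaries A$, with components $\beta_x\in\unitaries A(F\rho_{\mathcal G}x,F'\rho_{\mathcal G}x)$ for $x\in\obj\mathcal G$. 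Since $\rho_{\mathcal G}$ is the identity on objects (it is surjective on objects and, being the enveloping construction, bijective on objects), the two families of components are indexed by the same object set, and precomposition sends $\alpha$ to the restricted family $(\alpha_x)_{x\in\obj\mathcal G}$. I would show this assignment is a bijection in two moves. First, \textbf{injectivity}: two bounded natural transformations $F\Rightarrow F'$ that agree on the objects coming from $\mathcal G$ must agree everywhere, because $\rho_{\mathcal G}$ is surjective on objects, so $\obj\Cstarmax\mathcal G=\obj\mathcal G$ already accounts for all components. Second, \textbf{surjectivity and well-definedness of the inverse}: given a natural transformation $\beta$ of the composites, I define a candidate $\alpha$ on $\Cstarmax\mathcal G$ by $\alpha_u:=\beta_u$ (same object set) and must verify that this is natural with respect to \emph{all} arrows of $\Cstarmax\mathcal G$, not merely those in the image of $\rho_{\mathcal G}$.

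\emph{The main obstacle, and how I expect to clear it.} The subtle point is naturality of the reconstructed $\alpha$ against arbitrary morphisms of $\Cstarmax\mathcal G$. The naturality square $\alpha_{x'}\cdot F(g)=F'(g)\cdot\alpha_x$ is assumed only for $g=\rho_{\mathcal G}(h)$ with $h\in\mathcal G$; I must propagate it to the whole $\Cstar$-category. Here I would exploit that $\rho_{\mathcal G}:\mathcal G\to\Cstarmax\mathcal G$ is a \emph{dense} representation, so the image $\rho_{\mathcal G}(\mathcal G)$ generates $\Cstarmax\mathcal G$ as a $\Cstar$-category. The set of arrows $g$ satisfying the naturality square is closed under linear combinations, composition, and adjoints (using that the $\alpha_u$ are unitary, hence $\alpha_u^{-1}=\alpha_u^*$, so naturality for $g$ transfers to $g^*$), and it is norm-closed since all maps involved are continuous. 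Therefore this set, containing the generators $\rho_{\mathcal G}(\mathcal G)$, is all of $\Cstarmax\mathcal G$, giving naturality of $\alpha$. Finally each $\alpha_u=\beta_u$ is unitary and $\|\alpha\|_\infty=\sup_u\|\alpha_u\|\le 1<\infty$, so $\alpha$ is genuinely a unitary element of $\Cstar(\Cstarmax\mathcal G,A)$, i.e.\ a morphism of the source groupoid. Functoriality of both directions is immediate from pointwise composition of natural transformations, so precomposition with $\rho_{\mathcal G}$ is a fully faithful, bijective-on-objects functor between groupoids, hence the claimed isomorphism. I expect the density-plus-closure argument to be the one genuinely load-bearing step; everything else is bookkeeping governed by \eqref{general_bij}.
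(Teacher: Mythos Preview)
Your proposal is correct and follows essentially the same approach as the paper: both invoke \eqref{general_bij} for the bijection on objects, use that $\rho_{\mathcal G}$ is bijective on objects so that components of natural transformations are indexed by the same set, and then extend naturality from the generators $\mathcal G$ to all of $\Cstarmax\mathcal G$ by a density argument. The only cosmetic difference is that the paper makes the density step explicit by passing through the intermediate $*$-category $\F\mathcal G$ (linearity, then continuity of composition), whereas you phrase it as closure of the ``naturality locus'' under the $\Cstar$-operations; your extra remark about closure under adjoints is correct but not strictly needed here, since $g^*=g^{-1}$ already lies in~$\mathcal G$.
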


\begin{proof}
The bijection on objects is~\eqref{general_bij}.
Now let $F, F'\colon \Cstarmax \mathcal G \to A$ be any two $*$-functors, and let $\alpha\colon F\to F'$ be a unitary isomorphism. That is, for every $$x\in \obj(\Cstarmax \mathcal G)= \obj \mathcal G$$ we have a unitary arrow $\alpha_x\colon F(x)\to F'(x)$ in~$A$, and for every arrow $f\in\break (\Cstarmax \mathcal G)(x,x')$ the square
\[
\xymatrix{
F(x) \ar[r]^-{F(f)} \ar[d]_{\alpha_x} &
 F(x') \ar[d]^{\alpha_{x'}} \\
F'(x) \ar[r]^-{F'(f)} &
 F'(x')
}
\]
is commutative. In particular, it commutes for every $f\in \mathcal G(x,x')$, so $\alpha$ can also be seen as an (iso)morphism $F\rho \to F' \rho$ in the category 
$ \underline{\mathrm{Hom}} (\mathcal G, \unitaries A)$. 
Conversely, assume that we have a collection $(\alpha_x\colon Fx\to F'x)_{x\in \obj \mathcal G}$ of unitaries in $A$ rendering the above squares commutative for all $f\in \mathcal G(x,x')$. Then by the linearity of composition in~$A$, the squares commute for all $f\in \F\mathcal G(x,x')$, and by continuity of composition they commute for all $f$ in the completion $\Cstarmax\mathcal G(x,x')$, for all~$x,x'$. Thus $\alpha=(\alpha_x)_{x}$ is a unitary isomorphism $F\to F'$ in $\Cstar(\Cstarmax \mathcal G,A)$.
 \end{proof}

\begin{lemma} \label{lemma:max_products}
Let $\mathcal G_1$ and $ \mathcal G_2$ be two small groupoids and $A$ be any $\Cstar$-category.
There is an isomorphism of groupoids
\[
 \unitaries \Cstar \big( (\Cstarmax \mathcal G_1) \otimes_{\max} (\Cstarmax\mathcal G_2) , A \big)
\simeq
\unitaries \Cstar \big( \Cstarmax(\mathcal G_1 \times \mathcal G_2), A \big) 
\]
obtained by restricting and extending functors along the canonical maps 
$\mathcal G_1 \times \mathcal G_2 \to \Cstarmax(\mathcal G_1 \times \mathcal G_2)$ and
$\mathcal G_1 \times \mathcal G_2 \to \Cstarmax \mathcal G_1 \otimes_{\max} \Cstarmax \mathcal G_2 $.
\end{lemma}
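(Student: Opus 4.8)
The plan is to show that both groupoids in the statement are, by restriction along the respective canonical maps, isomorphic to one and the same groupoid, namely $\underline{\mathrm{Hom}}(\mathcal G_1 \times \mathcal G_2, \unitaries A)$ of ordinary functors, and then to read off the asserted isomorphism as the resulting composite. I would first dispatch the right-hand side: applying lemma~\ref{lemma:UP_max} to the groupoid $\mathcal G_1 \times \mathcal G_2$ and the $\Cstar$-category $A$ produces an isomorphism of groupoids
\[
\unitaries \Cstar \big( \Cstarmax(\mathcal G_1 \times \mathcal G_2), A \big) \;\simeq\; \underline{\mathrm{Hom}}(\mathcal G_1 \times \mathcal G_2, \unitaries A),
\]
which is by construction given by restriction along $\rho_{\mathcal G_1 \times \mathcal G_2}: \mathcal G_1 \times \mathcal G_2 \to \unitaries \Cstarmax(\mathcal G_1 \times \mathcal G_2)$, that is, along the first of the two canonical maps appearing in the statement.

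For the left-hand side I would assemble a short chain of isomorphisms of groupoids. Applying the functor $\unitaries$ to the isomorphism of $\Cstar$-categories of lemma~\ref{lemma:adjunction_Cstar} gives
\[
\unitaries \Cstar \big( (\Cstarmax \mathcal G_1) \otimes_{\max} (\Cstarmax \mathcal G_2), A \big) \;\simeq\; \unitaries \Cstar \big( \Cstarmax \mathcal G_1, \Cstar(\Cstarmax \mathcal G_2, A) \big).
\]
Lemma~\ref{lemma:UP_max}, now read with $\mathcal G = \mathcal G_1$ and with the (possibly large) $\Cstar$-category $\Cstar(\Cstarmax \mathcal G_2, A)$ in place of $A$, turns the right-hand side into $\underline{\mathrm{Hom}}(\mathcal G_1, \unitaries \Cstar(\Cstarmax \mathcal G_2, A))$. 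Applying the functor $\underline{\mathrm{Hom}}(\mathcal G_1, -)$ to the groupoid isomorphism $\unitaries \Cstar(\Cstarmax \mathcal G_2, A) \simeq \underline{\mathrm{Hom}}(\mathcal G_2, \unitaries A)$ (lemma~\ref{lemma:UP_max} once more) then yields
\[
\unitaries \Cstar \big( (\Cstarmax \mathcal G_1) \otimes_{\max} (\Cstarmax \mathcal G_2), A \big) \;\simeq\; \underline{\mathrm{Hom}}\big(\mathcal G_1, \underline{\mathrm{Hom}}(\mathcal G_2, \unitaries A)\big).
\]
Finally, the exponential law (Cartesian closedness) for the category of small groupoids identifies the target with $\underline{\mathrm{Hom}}(\mathcal G_1 \times \mathcal G_2, \unitaries A)$, so that the left-hand side, too, is canonically isomorphic to this common groupoid. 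Composing the left isomorphism with the inverse of the right one produces an isomorphism of the two groupoids in the statement.

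The main obstacle — indeed the only step beyond bookkeeping — is to check that this abstract composite coincides with the concrete operation described, namely restriction and extension along the two canonical maps. This requires unwinding the chain above: one traces a $*$-functor $F$ on $(\Cstarmax \mathcal G_1) \otimes_{\max} (\Cstarmax \mathcal G_2)$ through lemma~\ref{lemma:adjunction_Cstar} to its transpose $\Phi F$, and verifies that both restriction functors send it to the single functor
\[
\mathcal G_1 \times \mathcal G_2 \longrightarrow \unitaries A, \qquad (g_1, g_2) \longmapsto F\big(\rho_{\mathcal G_1}(g_1) \otimes \rho_{\mathcal G_2}(g_2)\big).
\]
Here one uses that the second canonical map $\mathcal G_1 \times \mathcal G_2 \to \unitaries\big((\Cstarmax \mathcal G_1) \otimes_{\max} (\Cstarmax \mathcal G_2)\big)$ is $(g_1, g_2) \mapsto \rho_{\mathcal G_1}(g_1) \otimes \rho_{\mathcal G_2}(g_2)$, and that $u \otimes v$ is unitary whenever $u$ and $v$ are (from $(u\otimes v)^*(u\otimes v) = u^*u \otimes v^*v = 1 \otimes 1$). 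Once both restriction functors are seen to factor through this same functor into $\underline{\mathrm{Hom}}(\mathcal G_1 \times \mathcal G_2, \unitaries A)$, the composite is precisely the \emph{restrict-then-extend} isomorphism claimed, which completes the proof.
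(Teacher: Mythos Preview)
Your proposal is correct and follows essentially the same route as the paper: the same chain of isomorphisms built from lemma~\ref{lemma:UP_max} (applied to $\mathcal G_1\times\mathcal G_2$, to $\mathcal G_1$, and to $\mathcal G_2$), the exponential law of lemma~\ref{lemma:adjunction_Cstar}, and the Cartesian closedness of $\Gpd$, with the only cosmetic difference that the paper runs the chain in one direction starting from $\unitaries\Cstar(\Cstarmax(\mathcal G_1\times\mathcal G_2),A)$, whereas you meet in the middle at $\underline{\mathrm{Hom}}(\mathcal G_1\times\mathcal G_2,\unitaries A)$. Your explicit unwinding of the composite as $(g_1,g_2)\mapsto F(\rho_{\mathcal G_1}(g_1)\otimes\rho_{\mathcal G_2}(g_2))$ is a welcome elaboration of what the paper leaves to the reader.
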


\begin{proof}
We get the following  isomorphisms of categories
\begin{eqnarray*}
\unitaries \Cstar(\Cstarmax(\mathcal G_1 \times \mathcal G_2), A )
& \simeq & \underline{\mathrm{Hom}} (\mathcal G_1 \times \mathcal G_2, \unitaries A) \\
& \simeq & \underline{\mathrm{Hom}} (\mathcal G_1 , \underline{\mathrm{Hom}} (\mathcal G_2, \unitaries A) ) \\
&\simeq & \underline{\mathrm{Hom}} ( \mathcal G_1, \unitaries \Cstar (\Cstarmax \mathcal G_2 , A) ) \\
&\simeq & \unitaries \Cstar ( \Cstarmax \mathcal G_1 , \Cstar (\Cstarmax \mathcal G_2 , A) ) \\
&\simeq & \unitaries \Cstar(\Cstarmax \mathcal G_1 \otimes_{\max}\Cstarmax \mathcal G_2, A )
\end{eqnarray*}
by successively applying: the universal property of 
$\Cstarmax(\mathcal G_1\times \mathcal G_2)$, 
as in Lemma~\ref{lemma:UP_max};
the standard closed monoidal structure of $\Gpd$;
the universal property of~$\Cstarmax\mathcal G_2$ (after which one applies $\underline{\mathrm{Hom}} (\mathcal G_1, -)$);
the universal property of $\Cstarmax\mathcal G_1$;
and finally the exponential law of Lemma~\ref{lemma:adjunction_Cstar}, to which one applies~$\unitaries$. 
By looking closely at the constructions of these isomorphisms, we see that the above composition is  the result of extending functors and restricting $*$-functors along the canonical maps, as claimed. 
\end{proof}

Now consider the comparison map
\begin{equation} \label{can_comp}
\Cstarmax(\mathcal G_1 \times \mathcal G_2)
\to  \Cstarmax \mathcal G_1 \otimes_{\max} \Cstarmax\mathcal G_2
\end{equation}
induced by the universal property of $\Cstarmax(\mathcal G_1 \times \mathcal G_2)$. It is the unique $*$-functor extending 
$\mathcal G_1 \times \mathcal G_2 \to  \Cstarmax \mathcal G_1 \otimes_{\max} \Cstarmax\mathcal G_2$,
 $(g_1,g_2) \mapsto g_1\otimes g_2$,
and it fits in a commutative triangle
\[
\xymatrix{
&\mathcal G_1\times \mathcal G_2 \ar[dl] \ar[dr] &
\\
\Cstarmax(\mathcal G_1 \times \mathcal G_2 ) \ar[rr] &&
 \Cstarmax \mathcal G_1 \otimes_{\max} \Cstarmax \mathcal G_2
}
\]
with the two canonical maps in Lemma~\ref{lemma:max_products}. From the lemma it follows in particular that $\Cstarcatun(- ,A)$ applied to~\eqref{can_comp} yields a bijection for every small $\Cstar$-category~$A$. 
We conclude by the Yoneda lemma (\cite{maclane}*{III.2, p.\,62}) that \eqref{can_comp} is an isomorphism in~$\Cstarcatun$. 
 Moreover:

\begin{thm} \label{thm:max_products}
The natural isomorphism \eqref{can_comp} and the canonical identification $\F\simeq \F1=\Cstarmax(1)$ make $\Cstarmax$ into a \textup(strong\textup) symmetric monoidal functor from the symmetric monoidal category of groupoids, $(\Gpd, \times, 1)$, to the symmetric monoidal category of $\Cstar$-categories, $(\Cstarcatun, \otimes_{\max}, \F)$.
\end{thm}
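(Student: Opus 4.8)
The structural data are already in hand: the natural isomorphism \eqref{can_comp} (or its inverse, according to one's sign conventions for the coherence cell) supplies the required comparison
\[
\phi_{\mathcal G_1,\mathcal G_2}:\Cstarmax \mathcal G_1 \otimes_{\max}\Cstarmax \mathcal G_2 \xrightarrow{\ \simeq\ }\Cstarmax(\mathcal G_1\times \mathcal G_2),
\]
and the canonical identification $\F\simeq \Cstarmax(1)$ supplies the unit isomorphism $\phi_0$. Naturality of $\phi$ in both variables has just been recorded above, so the plan is to verify the three coherence axioms of a symmetric monoidal functor: the associativity pentagon/hexagon relating $\phi$ to the associators of $\times$ and $\otimes_{\max}$, the two unit triangles relating $\phi_0$ and $\phi$ to the unitors, and the compatibility of $\phi$ with the symmetries. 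Each of these is the commutativity of a diagram all of whose vertices are $\Cstar$-categories of the shape $\Cstarmax$ of a finite product of groupoids, or an iterated $\otimes_{\max}$ of such.

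The key enabling fact is density. By example~\ref{ex:algebraic-relations} the universal functor $\rho_{\mathcal G}:\mathcal G\to \Cstarmax \mathcal G$ has dense image, and since $\Cstarmax \mathcal G_1 \otimes_{\max}\cdots \otimes_{\max}\Cstarmax \mathcal G_n$ is the enveloping $\Cstar$-category of the algebraic tensor product, it is densely generated (as a $\Cstar$-category) by the simple tensors $\rho(g_1)\otimes \cdots \otimes \rho(g_n)$, i.e.\ by the images of the tuples $(g_1,\dots,g_n)$ under the canonical maps from $\mathcal G_1\times\cdots\times\mathcal G_n$ (here one uses that identity arrows lie in $\rho_{\mathcal G}(\mathcal G)$ to absorb the factors $1\otimes \rho(g_i)$). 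Consequently any two $*$-functors out of such a category that agree on these generators coincide, so each coherence diagram, being a diagram of $*$-functors, need only be checked on the elements $\rho(g_1)\otimes\cdots\otimes\rho(g_n)$.

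On these generators everything acts transparently: the comparison $\phi$ sends $g_1\otimes g_2$ to the image of $(g_1,g_2)$ and back; the associator and symmetry of $\otimes_{\max}$, which by the proof of theorem~\ref{thm:closed_monoidal_Cstarcatun} are induced from those of $\otimes_{\F}$, act on simple tensors by the evident reassociation and transposition of factors; and $\phi_0$ matches the unit. Under this evaluation, each coherence diagram for $(\Cstarmax,\phi,\phi_0)$ becomes, arrow by arrow, the corresponding coherence diagram for the cartesian symmetric monoidal structure $(\Gpd,\times,1)$ applied to the tuple $(g_1,\dots,g_n)$. Since the cartesian product makes $\Gpd$ a (strictly coherent) symmetric monoidal category, all of these diagrams commute, and therefore the original diagrams commute by density. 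This establishes that $\Cstarmax$ is strong symmetric monoidal.

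The only genuinely delicate part is the opening bookkeeping step rather than any analysis: one must confirm the precise formulas for each structure morphism on the generators $\rho(g_1)\otimes\cdots\otimes\rho(g_n)$, and verify that the iterated maximal tensor products really are densely generated by such simple tensors, so that the reduction to generators is legitimate for every vertex of every diagram simultaneously. Once this is in place the remaining verifications are exactly the coherences of a cartesian monoidal category, which hold automatically and require no estimates or completion arguments.
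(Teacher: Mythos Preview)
Your proof is correct and takes essentially the same approach as the paper. The paper's one-line argument invokes ``the uniqueness of the arrows induced by the universal properties of $\Cstarmax$ and~$\otimes_{\max}$''; your density argument is precisely the concrete mechanism behind that uniqueness, spelled out in detail---a $*$-functor out of an iterated $\otimes_{\max}$ of groupoid $\Cstar$-categories is determined by its values on the simple tensors $\rho(g_1)\otimes\cdots\otimes\rho(g_n)$, and on those the coherence diagrams collapse to the cartesian coherences in~$\Gpd$.
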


\begin{proof}
The commutativity of the coherence diagrams involving~\eqref{can_comp}, $\F\simeq \Cstarmax(1)$, and the structural isomorphisms of the two symmetric monoidal categories (see~\cite{maclane}*{VI}), follows immediately from the uniqueness of the arrows induced by the universal properties of $\Cstarmax$ and~$\otimes_{\max}$.
\end{proof}

\section{The unitary model structure}

We shall now prove the main theorem, that there exists a well-endowed model structure on the category of small unital $\Cstar$-categories, whose weak equivalences are precisely the unitary equivalences (Def.\,\ref{defi:unitary_eq}). This section owes much to Charles Rezk's neat presentation~\cite{rezk:folk} of the canonical (or ``folk'') model structure on the category of small categories. Our reference for the theory of model categories will be~\cite{hovey:model}; for a pleasant gentle introduction we refer to the expository article~\cite{dwyer-spalinski}.

\begin{definition} \label{defi:cof_fib}
Let $F\colon A\to B$ be a $*$-functor between unital $\Cstar$-categories.
We call $F$  a \emph{cofibration} if the map on objects $\obj F\colon \obj A\to \obj B$ is injective. 
We call $F$ a \emph{fibration} if for every object $y\in \obj B$ and every unitary isomorphism $v\colon Fx\to y$ there exists a unitary $u\colon x\to x'$ in $A$ such that $Fu=v$ (and therefore $Fx'=y$). 
\end{definition}

\begin{thm}[The unitary model structure]
 \label{thm:first_model}
The class $\Cof$ of cofibrations and $\Fib$ of fibrations as in Definition~\ref{defi:cof_fib}, together with the class $\Weq$ of unitary equivalences as weak equivalences, define a model structure $(\Cstarcatun, \Weq, \Fib, \Cof)$, that we shall call the \emph{unitary model structure}.
\end{thm}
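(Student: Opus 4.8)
The plan is to verify the model category axioms (MC1–MC5 in the Dwyer–Spălinski numbering) for the triple $(\Weq, \Fib, \Cof)$, following the template of Rezk's treatment of the folk model structure on $\Cat$. The three classes are already defined; what remains is the verification. I would organize the work as follows.

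\emph{Easy axioms first.} MC1 (completeness and cocompleteness) is already established: limits exist by lemma~\ref{lemma:complete} and colimits by proposition~\ref{prop:cocomplete}. MC2 (two-out-of-three for $\Weq$) follows since unitary equivalences are, by the remark after proposition~\ref{prop:uni_eq} and lemma~\ref{lemma:un_eq}, exactly the $*$-functors inducing equivalences of underlying categories, and ordinary equivalences satisfy two-out-of-three. MC3 (retract closure) I would check class by class: cofibrations are the objectwise-injective $*$-functors, and injectivity on objects is preserved under retracts; fibrations are characterized by a unitary-isomorphism lifting property that is plainly retract-stable; and $\Weq$ is retract-closed for the same reason it satisfies two-out-of-three.

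\emph{The lifting and factorization axioms.} For MC4 (lifting) I would first identify the trivial fibrations and trivial cofibrations. The key structural fact to prove is that a $*$-functor is a trivial fibration (fibration $+$ weak equivalence) if and only if it is surjective on objects and fully faithful; and that the acyclic (trivial) cofibrations are the injective-on-objects unitary equivalences. With these characterizations, the two lifting statements reduce to explicit constructions: given a commutative square with a cofibration on the left and a trivial fibration on the right, one builds the diagonal lift object by object (using surjectivity on objects and full faithfulness to define it on the arrows), and dually for a trivial cofibration against a fibration, where the unitary lifting property of fibrations is exactly what lets one transport the lift across the unitary isomorphisms witnessing the equivalence. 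For MC5 (factorization) I would produce the two functorial factorizations by the standard mapping-path/mapping-cylinder constructions adapted to $\Cstar$-categories: factor $F\colon A\to B$ as a cofibration followed by a trivial fibration by building a $\Cstar$-categorical analogue of the mapping cylinder (adjoining the objects of $A$ disjointly, which forces injectivity on objects), and as a trivial cofibration followed by a fibration by the mapping-path-space construction using $\Cstar(\unit,B)$-type path objects assembled from unitary isomorphisms.

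\emph{The main obstacle} I expect is the lifting/factorization step MC4–MC5, specifically ensuring that the constructions carried out at the level of underlying categories stay within $\Cstarcatun$ and respect the analytic structure — that the adjoined arrows can be normed so that the result is a genuine $\Cstar$-category and that the comparison $*$-functors are isometric where required. Here I would lean heavily on the universal-construction machinery of section~\ref{sec:univ} (theorem~\ref{thm:univ} and example~\ref{ex:algebraic-relations}): the mapping cylinder and path objects should be realized as universal $\Cstar$-categories on suitable admissible quivers with relations, so that their universal properties automatically supply the required lifts and factorizations, with proposition~\ref{prop:uni_eq} guaranteeing that isomorphisms can always be upgraded to unitaries. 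The verification that these quivers-with-relations are admissible (chiefly the boundedness condition~(4)) and that the resulting factorizations are functorial is where the real care is needed; everything else is a translation of the categorical folk-model argument through the forgetful functor to $\Cat$.
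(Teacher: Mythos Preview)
Your proposal is correct and follows essentially the same route as the paper: verify MC1--MC5, characterize trivial fibrations as the fully faithful, surjective-on-objects $*$-functors (the paper's corollary~\ref{cor:char_tr_fib}), and build the factorizations via mapping-path and mapping-cylinder objects. The one place your expectations diverge is the anticipated obstacle: the paper never needs to verify new admissibility conditions, since the factorization objects are realized as the pullback $\Cstar(\mathbf I,B)\times_B A$ and the pushout $(A\otimes_{\max}\mathbf I)\sqcup_A B$ of already-constructed ingredients, and explicit concrete models $\tilde A$, $\tilde B$ are then written down by hand so that the (trivial) fibration and cofibration checks become immediate.
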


\begin{remark} \label{rem:fibrant_cofibrant}
Every $\Cstar$-category $A$ is both fibrant and cofibrant in the unitary model, \emph{i.e.}, the unique $*$-functor $A\to \mathbf 0$ to the final $\Cstar$-category $\mathbf 0$ (the one having one single object with zero endomorphism space) is a fibration, and the unique $*$-functor $\emptyset \to A$ from the initial (\emph{i.e.}, empty) $\Cstar$-category $\emptyset$ is a cofibration.

\end{remark}

\begin{definition} \label{defi:interval}
Define the \emph{interval  $\Cstar$-category} $\mathbf I$ to be the universal unital $\Cstar$-category with two objects, $0$ and~$1$, and a unitary element $u\colon 0\to 1$. (In other words, $\mathbf I=\Cstarmax(I)$, where $I$ is the groupoid with two objects $0,1$ and a single isomorphism between them). Thus $*$-functors $\mathbf I\to A$ correspond to unitaries $u\in A$.
\end{definition}

\begin{lemma} \label{lemma:lifting_fib}
A $*$-functor $F\colon A\to B$ is a fibration if and only if it has the right lifting property with respect to the $*$-functor $0\colon \F\to \mathbf I$ corresponding to~$0\in \obj \mathbf I$, \emph{i.e.}, if and only if for every commutative square in $\Cstarcatun$ of the form
\[
\xymatrix{
\F \ar[r] \ar[d]_0 & 
 A \ar[d]^F \\
  \mathbf I \ar[r] \ar@{..>}[ur] &
   B
}
\]
 there exists a dotted map as indicated, making the two triangles commute.
\end{lemma}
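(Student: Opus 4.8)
The plan is to decode the entire lifting problem in terms of objects and unitary arrows, after which the right lifting property reads, word for word, as the defining condition of a fibration in Definition~\ref{defi:cof_fib}. I will lean on two dictionary entries already established: by Example~\ref{ex:Cstarcats}(b), a $*$-functor $\F\to A$ is the same datum as an object of $A$ (the image of the unique object $\bullet$), and by Definition~\ref{defi:interval}, a $*$-functor $\mathbf I\to A$ is the same datum as a unitary arrow of $A$ (the image of $u:0\to1$), with its source and target being the images of $0$ and~$1$.

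First I would unwind a commutative square. The top arrow $\F\to A$ names an object $x\in\obj A$, while the bottom arrow $\mathbf I\to B$ names a unitary $v:y_0\to y$ in $B$, where $y_0,y$ denote the images of $0,1$. Traversing the square via $0:\F\to\mathbf I$ picks out, on objects, the image of $0$, namely $y_0$, whereas traversing it via $F$ picks out $Fx$; hence commutativity of the square amounts to the single equation $y_0=Fx$. In other words, a commutative square is exactly a pair consisting of an object $x$ of $A$ together with a unitary isomorphism $v:Fx\to y$ in $B$, which is precisely the input data of the fibration condition.

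Next I would unwind a diagonal filler. A dotted arrow $\mathbf I\to A$ names a unitary $u:a_0\to a_1$ in $A$. Commutativity of the upper triangle forces, on objects via $0:\F\to\mathbf I$, the equality $a_0=x$, so the filler is a unitary $u:x\to x'$ issuing from $x$, where we set $x':=a_1$; commutativity of the lower triangle then says that $F$ carries $u$ to the prescribed unitary, i.e.\ $Fu=v$ (and in particular $Fx'=y$). Thus a solution of the lifting problem is exactly a unitary $u:x\to x'$ in $A$ with $Fu=v$.

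Putting the two translations together, a solution to the lifting problem against $0:\F\to\mathbf I$ exists for every commutative square if and only if, for every object $x\in\obj A$ and every unitary $v:Fx\to y$ in $B$, there is a unitary $u:x\to x'$ in $A$ lifting $v$, which is verbatim the definition of fibration. There is no genuine obstacle beyond careful bookkeeping; the one point deserving attention is that the two correspondences (objects $\leftrightarrow$ maps from $\F$, unitaries $\leftrightarrow$ maps from $\mathbf I$) are natural enough that pre- and post-composition with $0$ and $F$ translate into the stated equalities of objects and arrows, and this follows directly from the universal properties recorded in Example~\ref{ex:Cstarcats}(b) and Definition~\ref{defi:interval}.
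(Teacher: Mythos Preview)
Your argument is correct and follows exactly the same approach as the paper: both proofs reduce the lifting problem to the dictionary that $*$-functors $\F\to A$ correspond to objects and $*$-functors $\mathbf I\to B$ correspond to unitary arrows, after which the lifting condition reads off as the definition of a fibration. The paper compresses this into a single sentence, while you spell out the bookkeeping in full, but the content is identical.
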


\begin{proof}
This is clear, because $*$-functors $\F\to A$ are in bijection with the objects of~$A$
 and $*$-functors $\mathbf I\to B$ with the unitary elements of~$B$.
\end{proof}

Although the next lemma is obvious (in view of Proposition~\ref{prop:uni_eq}), we spell out the argument again as this will later help us clarify some constructions. 

\begin{lemma} \label{lemma:un_eq}
Let $F\colon A\to B$ be a $*$-functor between $\Cstar$-categories. The following are equivalent:
\begin{itemize}
\item[(i)] $F$ is a unitary equivalence.
\item[(ii)] $F$ is fully faithful \textup(as a functor of the underlying categories of $A$ and~$B$\textup) and also \emph{unitarily essentially surjective}, i.e., for every $y\in B$ there is a unitary arrow $y \simeq Fx$ in $B$ for some $x\in \obj A$.
\item[(iii)] $F$ is an \textup(ordinary\textup) equivalence of the underlying categories of $A$ and~$B$.
\item[(iv)] $F$ is fully faithful and essentially surjective, in the usual sense.
\end{itemize}
\end{lemma}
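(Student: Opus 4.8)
The plan is to prove the cycle of implications (i) $\Rightarrow$ (iii) $\Rightarrow$ (iv) $\Rightarrow$ (ii) $\Rightarrow$ (i), of which only the last step carries real content. The implication (i) $\Rightarrow$ (iii) is immediate: a unitary equivalence comes equipped with a quasi-inverse $*$-functor $G$ and natural isomorphisms $GF\simeq \id_A$, $FG\simeq \id_B$, so simply forgetting that their components happen to be unitary already exhibits $F$ as an equivalence of the underlying categories. The equivalence (iii) $\Leftrightarrow$ (iv) is the standard characterization of categorical equivalences as the fully faithful, essentially surjective functors and uses nothing about the $\Cstar$-structure. The step (iv) $\Rightarrow$ (ii) is where proposition~\ref{prop:uni_eq} first enters: essential surjectivity provides for every $y\in\obj B$ some isomorphism $y\simeq Fx$ in $B$, and proposition~\ref{prop:uni_eq} replaces it by a unitary one, so $F$ is unitarily essentially surjective; full faithfulness is common to (ii) and (iv). (That (ii) $\Rightarrow$ (iv) is trivial, since a unitary isomorphism is in particular an isomorphism.)

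The substantial step is (ii) $\Rightarrow$ (i), and I would approach it by the usual construction of a quasi-inverse, taking care to make unitary choices. For each $y\in\obj B$, unitary essential surjectivity furnishes an object $Gy\in\obj A$ together with a \emph{unitary} $\epsilon_y: FGy\to y$ in $B$; full faithfulness then lets me define $G$ on a morphism $g: y\to y'$ as the unique arrow $Gg: Gy\to Gy'$ of $A$ with $F(Gg)=\epsilon_{y'}^{-1}\,g\,\epsilon_y$. Faithfulness of $F$ forces $G$ to be $\F$-linear at once. The crucial point, and the reason unitarity was built into the $\epsilon_y$, is that $G$ is then automatically a $*$-functor: for $g:y\to y'$ one computes $F(G(g^*))=\epsilon_y^{-1}g^*\epsilon_{y'}$ directly from the definition, while $F((Gg)^*)=(\epsilon_{y'}^{-1}g\,\epsilon_y)^*=\epsilon_y^* g^* (\epsilon_{y'}^{-1})^*=\epsilon_y^{-1}g^*\epsilon_{y'}$, where the last equality uses $\epsilon_y^*=\epsilon_y^{-1}$ and $(\epsilon_{y'}^{-1})^*=\epsilon_{y'}$. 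These agree, so $G(g^*)=(Gg)^*$ by faithfulness of $F$. Had the $\epsilon_y$ been merely isomorphisms this computation would break, which is exactly why proposition~\ref{prop:uni_eq} is indispensable here.

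It then remains to produce the unitary unit and counit required by definition~\ref{defi:unitary_eq}. The counit is simply $v:=(\epsilon_y)_y$: naturality is built into the definition of $G$ on morphisms, and each $v_y=\epsilon_y$ is unitary by choice. For the unit I would set, for each $x\in\obj A$, the arrow $u_x:GFx\to x$ to be the unique morphism of $A$ with $F(u_x)=\epsilon_{Fx}$ (using full faithfulness); since $F(u_x^*u_x)=\epsilon_{Fx}^*\epsilon_{Fx}=1_{FGFx}=F(1_{GFx})$ and symmetrically $F(u_x u_x^*)=F(1_x)$, faithfulness shows $u_x$ is unitary, and naturality of $u$ follows from the defining relation for $G$ together with faithfulness of $F$. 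This assembles $G$, $u$, $v$ into the data witnessing that $F$ is a unitary equivalence. The hard part is the single observation above that unitary choices make the quasi-inverse $*$-preserving; everything else is bookkeeping of directions and adjoints, and the only subtlety to watch is keeping the sources, targets, and $*$'s straight in these verifications.
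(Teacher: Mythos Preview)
Your proof is correct and follows essentially the same approach as the paper's: the easy implications are handled as you describe, and the substantive step (ii) $\Rightarrow$ (i) is carried out by exactly the same construction of a quasi-inverse $G$ via unitary choices $\epsilon_y$ (the paper's $v_y$), with the same verification that unitarity of the $\epsilon_y$ is precisely what makes $G$ a $*$-functor. The only cosmetic difference is that you organize the implications as a single cycle, whereas the paper checks several short implications separately; the content is identical.
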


\begin{proof}
The implications (i) $\Rightarrow$~(ii), (ii) $\Rightarrow$~(iv), and (iii) $\Rightarrow$~(iv) are seen immediately, and (iv) $\Rightarrow$~(ii) follows from Proposition~\ref{prop:uni_eq}. 
The implication (iv) $\Rightarrow$~(iii) is an exercise application of the axiom of choice, and (ii) $\Rightarrow$~(i) is quite similar, as we now verify in detail. Assume that $F\colon A\to B$ is fully faithful and unitarily essentially surjective. 
By the latter condition, we are free to choose for each $y\in \obj B$ an object $Gy \in \obj A$ and a unitary isomorphism $v_y\colon FGy\to y$.
Let $b\in B(y,y')$. Since $F$ is bijective on Hom spaces, we may set 
\[
Gb:= F^{-1}(v_{y'}^* b v_y) \in A(Gy,Gy').
\]
One checks immediately that the assignments $x\mapsto Gx$ and $b\mapsto Gb$ define a functor $G\colon B\to A$. 
Moreover, $G$ is a $*$-functor, as one sees by applying $F^{-1}$ to the equality
\[
F((Gb)^*) 
= (v_{y'}^* b v_y)^*
= v_{y}^* b^* v_{y'}
= FG(b^*),
\]
and the unitaries $(v_y)_{y\in \obj B}$ define an isomorphism $v\colon FG\to \id_{B}$. 
Finally, note that for every $x\in \obj A$ we have chosen an object $GFx\in\obj A$ and a unitary element $v_{Fx}\in B(FGFx,Fx)$.  
It is straightforward to verify that the elements $u_x:=F^{-1}(v_{Fx}) \in A(GFx,x)$ are unitary and form an isomorphism $u\colon GF\to \id_A$.
Therefore $F$ is a unitary equivalence with quasi-inverse~$G$.
\end{proof}

\begin{remark}
\label{remark:un_eq}
When proving the implication (ii)$\Rightarrow$(i) of Lemma~\ref{lemma:un_eq}, note that if~$F$ happens to be injective on objects, for every $y=F(x)\in \obj B$ lying in the image of~$F$ we may well choose $G(y):=x$, and similarly we may choose the unitary $v_y\colon FGy\to y$ to be the identity of~$y$.
The resulting quasi-inverse $G\colon B\to A$ has then the additional property that $GF=\id_A$.
\end{remark}

\begin{cor} \label{cor:char_tr_fib}
A $*$-functor $F\colon A\to B$ is a trivial fibration \textup(\emph{i.e.}, $F \in \Weq \cap \Fib$\textup) if and only if it is fully faithful and surjective on objects. 
\end{cor}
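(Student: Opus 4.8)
The statement is an equivalence, so I would prove the two implications separately, leaning on the characterization of unitary equivalences in lemma~\ref{lemma:un_eq} and the concrete description of fibrations in definition~\ref{defi:cof_fib}.

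For the forward direction, suppose $F\in\Weq\cap\Fib$. Being a weak equivalence, $F$ is a unitary equivalence, hence fully faithful by lemma~\ref{lemma:un_eq}. It then remains only to upgrade unitary essential surjectivity to genuine surjectivity on objects, and this is exactly where the fibration hypothesis enters. Given $y\in\obj B$, condition (ii) of lemma~\ref{lemma:un_eq} supplies an object $x\in\obj A$ together with a unitary isomorphism $v\colon Fx\to y$ in $B$; since $F$ is a fibration, $v$ lifts to a unitary $u\colon x\to x'$ in $A$ with $Fu=v$, so that $Fx'=y$. Thus $y$ lies in the image of $\obj F$, and $F$ is surjective on objects.

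For the converse, suppose $F$ is fully faithful and surjective on objects. Surjectivity on objects is in particular unitary essential surjectivity (choosing the identity unitary at each $y=Fx$), so the implication (ii)$\Rightarrow$(i) of lemma~\ref{lemma:un_eq} already gives $F\in\Weq$. To see that $F$ is a fibration, take $y\in\obj B$ and a unitary $v\colon Fx\to y$. By surjectivity on objects there is an $x'\in\obj A$ with $Fx'=y$, and by full faithfulness there is a unique $u:=F^{-1}(v)\in A(x,x')$ with $Fu=v$. The one point demanding care is that $u$ is again unitary: since $F$ is injective on each Hom space and commutes with the involution, the equalities $F(u^*u)=F(u)^*F(u)=v^*v=1_{Fx}=F(1_x)$ and $F(uu^*)=vv^*=1_{Fx'}=F(1_{x'})$ force $u^*u=1_x$ and $uu^*=1_{x'}$. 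Hence $v$ lifts to the unitary $u$, and $F\in\Fib$.

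I expect no genuine obstacle here; the corollary is essentially a repackaging of lemma~\ref{lemma:un_eq} against the lifting description of fibrations. The only step worth isolating is the last one, namely that a fully faithful $*$-functor \emph{reflects} unitaries, which follows from faithfulness together with the compatibility of $F$ with~$*$. Everything else is a direct combination of the two cited results.
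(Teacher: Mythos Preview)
Your proof is correct and follows essentially the same route as the paper's own argument: both directions are handled by combining lemma~\ref{lemma:un_eq} with the explicit description of fibrations, and the only nontrivial step in the converse---that a fully faithful $*$-functor reflects unitaries---is exactly what the paper records (more tersely) as ``$F$ induces a bijection of unitary elements''. There is nothing to add.
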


\begin{proof}
Assume that $F$ is a trivial fibration. 
Being a unitary equivalence, it is fully faithful and unitarily essentially surjective  (Lemma~\ref{lemma:un_eq}). 
Hence, for every object $y\in \obj B$ there are an $x\in \obj A$ and a unitary $v\colon y\simeq Fx$. 
Since $F$ is also a fibration, we may lift $v$ to a unitary $u\colon x'\simeq x$ in $A$ with $Fu=v$.
In particular $Fx'=y$, showing that $F$ is surjective on objects.

Conversely, assume that $F$ is fully faithful and surjective on objects. Then $F$ is unitarily essentially surjective (because identity maps are unitaries), and therefore it is a unitary equivalence by Lemma~\ref{lemma:un_eq}. Also, given a unitary $v\colon Fx\to y$ there is an $x'$ with $Fx'=y$, and~$F$ -- being a $*$-functor --  induces a bijection $F\colon A(x,x')\simeq B(Fx,y)$ of unitary elements.
Hence $F$ is a fibration too.
\end{proof}

We now prove the five axioms MC1-MC5 of a model categories.

 \textbf{MC1} (Limits and colimits axiom).
We have already established in Lemma~\ref{lemma:complete} that $\Cstarcatun$ is complete,
and in Proposition~\ref{prop:cocomplete} that it is cocomplete, so the first axiom holds.

 \textbf{MC2} (2-out-of-3 axiom). We must verify that unitary equivalences have the 2-out-of-3 property, \emph{i.e.}, that whenever two $*$-functors $F,G$ are composable and two out of $\{F,G, FG\}$ are in $\Weq$, then so is the third. This follows by combining the 2-out-of-3 property of the usual equivalences of categories with that of unitary arrows of $\Cstar$-categories. We leave the easy exercise to the reader.

\begin{lemma}[\textbf{MC3}, retract axiom]
\label{lemma:MC3}
Each of the classes $\Weq$, $\Cof$ and $\Fib$ is closed under taking retracts.
\end{lemma}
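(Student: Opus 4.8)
The plan is to verify that each of the three classes $\Weq$, $\Cof$ and $\Fib$ is stable under retracts by treating them one at a time, since they are defined by rather different kinds of conditions. Recall that a map $F: A \to B$ is a retract of a map $F': A' \to B'$ when there is a commutative diagram
\[
\xymatrix{
A \ar[r] \ar[d]_F & A' \ar[r] \ar[d]_{F'} & A \ar[d]^F \\
B \ar[r] & B' \ar[r] & B
}
\]
in which both horizontal composites are identities. Applying the object-set functor $\obj(-)$, and in the two cases below also the underlying-ordinary-category functor, preserves such retract diagrams, so the strategy throughout is to reduce the statement to a retract stability that is already known in a simpler category (sets, or ordinary categories).

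For cofibrations, I would apply $\obj(-)$ to the retract diagram and observe that $\obj F$ is then a retract, in the category of sets, of $\obj F'$. Since injective maps of sets are closed under retracts (a retract of a monomorphism is a monomorphism in any category, by an easy diagram chase), $\obj F$ is injective, so $F \in \Cof$. For weak equivalences, I would use the characterization of lemma~\ref{lemma:un_eq}: $F \in \Weq$ if and only if $F$ is an ordinary equivalence of the underlying categories. Applying the underlying-category forgetful functor to the retract square exhibits $F$ as a retract of $F'$ in the category of ordinary categories, and categorical equivalences are well known to be closed under retracts there (either directly, or because equivalences are exactly the weak equivalences of the folk model structure on $\Cat$, which satisfies MC3 — this is precisely the compatibility with \cite{rezk:folk} that the section advertises). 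Hence $F \in \Weq$.

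The fibration case is the one requiring a genuine lifting argument, and I expect it to be the main obstacle. Here the clean approach is to invoke lemma~\ref{lemma:lifting_fib}, which characterizes fibrations as exactly those $*$-functors having the right lifting property with respect to the single map $0: \F \to \mathbf I$. It is a standard and purely formal fact of model-category theory that, for any fixed map $j$, the class of maps having the right lifting property against $j$ is closed under retracts: given the retract square above with $F'$ having the RLP against $j$, any lifting problem for $F$ against $j$ can be transported to a lifting problem for $F'$ (by composing along the top and bottom horizontal maps into the middle column), solved there, and then transported back (by composing along the retraction maps), using the two composite identities to check that the resulting diagonal genuinely solves the original problem. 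Applying this with $j = (0:\F \to \mathbf I)$ shows $\Fib$ is closed under retracts. This completes the verification of MC3.

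Overall the only subtlety is organizational: each class must be recast in a form where retract stability is automatic (a subfunctor-of-monomorphisms argument for $\Cof$, transport to $\Cat$ for $\Weq$, and the RLP formalism for $\Fib$), after which no serious computation remains.
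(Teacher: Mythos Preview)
Your proof is correct, and for $\Cof$ and $\Fib$ it matches the paper's argument exactly (injectivity on objects is preserved under retracts of set maps; fibrations are characterized by the right lifting property of lemma~\ref{lemma:lifting_fib}, and any RLP-defined class is retract-stable).

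For $\Weq$ you take a slightly different route from the paper. You invoke lemma~\ref{lemma:un_eq} to identify unitary equivalences with ordinary equivalences of the underlying categories, and then appeal to the retract-stability of equivalences in~$\Cat$. The paper instead argues directly: given a retract diagram with middle column a unitary equivalence~$F$ (with quasi-inverse~$G$ and unitary isomorphisms $u:GF\simeq\id$, $v:FG\simeq\id$), it sets $G':=PGJ$ for the outer map~$F'$ and checks that $Pu_I$ and $Qv_J$ furnish the required natural isomorphisms, observing that they are unitary because $*$-functors preserve unitaries. Your approach is more modular and avoids that last observation by having already absorbed it into lemma~\ref{lemma:un_eq}; the paper's is more self-contained and does not depend on proposition~\ref{prop:uni_eq}. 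Both are equally valid, and in fact the paper's explicit construction is precisely the standard proof of the $\Cat$ fact you cite.
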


\begin{proof}
Fibrations are closed under retracts because they are characterized by a lifting property (Lemma~\ref{lemma:lifting_fib}). 
Cofibrations, because they are characterized by the injectivity of their underlying object-function, and injective maps are closed under retracts. There remain weak equivalences, which we check directly.
Consider a commutative diagram of $*$-functors
\[
\xymatrix{
A' \ar[d]_{F'} \ar[r]^-I &
 A \ar[d]_{F} \ar[r]^-P &
  A' \ar[d]^-{F'} \\
B' \ar[r]^-J &
 B \ar[r]^-Q &
  B'  
}
\]
with $PI=\id_{A'}$, $QJ=\id_{B'}$ and with $F$ a unitary equivalence. 
Choose a quasi-inverse $G \colon B\to A $ for $F$ and natural unitaries $u\colon GF \simeq \id_A$ and $v \colon FG\simeq \id_B$. 
We claim that $G':= PGJ\colon B'\to A'$ is a quasi-inverse for~$F'$. 
Indeed, $u$ and $v$ define two natural morphisms 
\[
Pu_{I} \colon G'F'=PGJF'=PGFI \longrightarrow PI=\id_{A'}
\] 
and
\[
Qv_J \colon F'G'=F'PGJ= QFGJ \longrightarrow QJ= \id_{B'}
,
\]
which moreover are unitary, because $*$-functors preserve unitaries. 
\end{proof}

\begin{lemma}[\textbf{MC4}, lifting axiom] 
\label{lemma:MC4}
 Consider a commutative square in $\Cstarcatun$
 \[
\xymatrix{
A \ar[r]^-U \ar[d]_F & 
 C \ar[d]^G \\
  B \ar[r]_-V \ar@{..>}[ur]^-L &
   D
}
\]
and assume either one of (i) or (ii): 
 \begin{itemize}
 \item[(i)]
 $F\in  \Cof \cap \Weq$ and $G\in \Fib$, or 
 \item[(ii)]
 $F\in \Cof$ and $G\in \Fib\cap \Weq$.
 \end{itemize}
 Then there exists a lifting $L\colon B\to C$ making the two triangles commute.
\end{lemma}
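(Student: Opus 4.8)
The plan is to treat the two cases separately, using the explicit descriptions of the classes involved; in both, I build the lift $L:B\to C$ by hand and reduce the verification that it is a $*$-functor to checking equations after applying a \emph{faithful} $*$-functor.

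Case (ii) ($F\in\Cof$ and $G\in\Fib\cap\Weq$). By corollary~\ref{cor:char_tr_fib}, $G$ is fully faithful and surjective on objects. I would define $L$ on objects as follows: for $b=F(a)$ in the image of~$F$ --- which is unambiguous since $F$ is injective on objects --- set $L(b):=U(a)$, noting that $GU(a)=VF(a)=V(b)$ by commutativity of the square; for $b$ outside the image, use surjectivity of~$G$ on objects to choose some $L(b)$ with $GL(b)=V(b)$. On a morphism $\beta\in B(b,b')$, full faithfulness of~$G$ yields a unique $\gamma\in C(L(b),L(b'))$ with $G(\gamma)=V(\beta)$, and I set $L(\beta):=\gamma$. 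Since $G$ is faithful, functoriality, $\F$-linearity and $*$-compatibility of $L$ all follow by applying $G$ to the relevant identities (which hold because $V$ is a $*$-functor), so $L$ is a $*$-functor; the equality $GL=V$ is immediate, and $LF=U$ follows on objects by construction and on morphisms again from faithfulness of~$G$ (both $L(F(\alpha))$ and $U(\alpha)$ map to $VF(\alpha)=GU(\alpha)$ under $G$).

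Case (i) ($F\in\Cof\cap\Weq$ and $G\in\Fib$). Here I would first exploit that $F$ is a unitary equivalence that is injective on objects: by remark~\ref{remark:un_eq} I may choose a quasi-inverse $H:B\to A$ with $HF=\id_A$ \emph{strictly} and a unitary isomorphism $v:FH\simeq\id_B$ whose components satisfy $v_{F(a)}=\id_{F(a)}$. Put $L_0:=UH:B\to C$; then $L_0F=UHF=U$ on the nose, while $GL_0=GUH=VFH$ is only isomorphic to~$V$, via the unitary natural transformation $w:=V(v):GL_0\Rightarrow V$ with components $w_b=V(v_b)$. The idea is to correct $L_0$ by transporting it along~$w$ using the fibration property of~$G$: for each $b\in\obj B$ I lift the unitary $w_b:GL_0(b)\to V(b)$ to a unitary $\tilde w_b:L_0(b)\to L(b)$ in $C$ with $G\tilde w_b=w_b$ and $GL(b)=V(b)$, and then set
\[
L(\beta):=\tilde w_{b'}\,L_0(\beta)\,\tilde w_b^{*}\qquad(\beta\in B(b,b')).
\]
A direct computation then shows that $L$ is a $*$-functor (it is the conjugate of the $*$-functor $L_0$ by the unitaries $\tilde w_b$) and that $GL=V$, the latter using naturality of~$w$ in the form $w_{b'}(GL_0)(\beta)w_b^{*}=V(\beta)$ together with $G\tilde w_b=w_b$.

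The crucial point, and the main obstacle, is to keep the upper triangle $LF=U$ strict while performing this transport. This is exactly where the cofibration hypothesis enters: since $w_{F(a)}=V(v_{F(a)})=V(\id_{F(a)})=\id$, I am free to choose the lift $\tilde w_{F(a)}:=\id_{U(a)}$, and this choice is well defined precisely because $F$ is injective on objects, so that each $b$ in the image of~$F$ determines a unique~$a$. With these choices $L(F(a))=L_0(F(a))=U(a)$, and for an arrow $\alpha$ we get $L(F(\alpha))=\tilde w\,L_0(F(\alpha))\,\tilde w^{*}=UHF(\alpha)=U(\alpha)$ using $HF=\id_A$; hence $LF=U$, completing the lift.
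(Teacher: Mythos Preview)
Your proof is correct and follows essentially the same approach as the paper's. Both arguments treat case~(ii) by lifting morphisms through the fully faithful~$G$, and case~(i) by first composing~$U$ with the strict quasi-inverse $H$ (the paper's~$F'$) obtained from remark~\ref{remark:un_eq}, then correcting via unitary lifts along the fibration~$G$, carefully choosing identity lifts over the image of~$F$. The only differences are cosmetic: you reverse the order of the two cases and introduce the intermediate name~$L_0=UH$, whereas the paper works directly with the objects $y_x:=UF'x$ and the unitaries~$w_x$.
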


\begin{proof}
(i)
Let $F\in \Weq\cap \Cof$.
By Lemma~\ref{lemma:un_eq} and Remark~\ref{remark:un_eq}, there exists a quasi-inverse $F'\colon B\to A$ such that $F'F= \id_A$, and such that the other unitary $v\colon FF'\to \id_B$ is the identity for every object in the image of~$\obj F$:
\begin{equation} \label{identity_zero}
 v_{Fz}= 1_{Fz} \textrm{ for all } z\in \obj A.
 \end{equation} 
Let us first define the lifting $L$ on objects.  
For every $x\in \obj B$, let $y_x:=UF'x \in \obj C$. Note that $y_x$ is such that $Gy_x=VFF'x$, by the commutativity of the square. 
Since $G$ is a fibration and $Vv_x \colon Gy_x=VFF'x\to Vx$ is a unitary in~$D$, we can choose an object $Lx\in \obj C$ and a unitary $w_x\colon y_x \to Lx$ in~$C$ such that
\begin{equation} \label{identity_one}
GLx = Vx
\quad \textrm{ and } \quad
Gw_x= Vv_x.
\end{equation}
Moreover, for $x=Fz$ in the image of $\obj F$, we may certainly (and will) choose 
\begin{equation} \label{identity_two}
LFz = Uz  
\quad \textrm{ and } \quad
w_{Fz} = 1_{Uz}.
\end{equation}
We should now (and will) define $L$ on morphisms $b\colon x\to x'$ by the formula: 
\[
L b  :=  w_{x'}  \cdot  UF'b  \cdot  w_{x}^* ,
\]
so that the following square commutes.
\[
\xymatrix{
UF'x \ar[r]^-{w_x} \ar[d]_{UF'b} & 
 Lx \ar[d]^{Lb} \\
  UF'x' \ar[r]^-{w_{x'}}  &
   Lx'
}
\]
Using \eqref{identity_zero}, \eqref{identity_one}, \eqref{identity_two} and $F'F=\id_A$, it is now immediate to verify that $L\colon B \to C$ is a $*$-functor such that  $GL=V$ and $LF=U$. 
\smallbreak
(ii)
This time, we can choose the map $\obj L$ by using the injectivity of $\obj F$ and the surjectivity of~$\obj G$ (Corollary~\ref{cor:char_tr_fib}).
We take care to set $Lx:= Uz$ whenever $x=Fz$ is in the image of~$F$.
Since $G$ is also fully faithful, it provides isomorphisms $G \colon C(Lx, Lx') \simeq D(GLx,GLx')= D(Vx,Vx')$ for all $x,x' \in \obj C$.
Composing their inverses with $V$ defines the unique $*$-functor $L\colon C\to D$ with the chosen object-map and such that $GL=V$ and $LF=U$.
\end{proof}

\begin{lemma}[\textbf{MC5}, factorization axiom]
For every $*$-functor $F\colon A\to B$, there exist two factorizations 
\begin{itemize}
\item[(i)] $F= PI$ with $P\in \Fib $ and $I\in \Cof \cap \Weq$, and
\item[(ii)] $F= QJ$ with $Q\in \Fib \cap \Weq$ and $J\in \Cof $,
\end{itemize}
which are functorial in~$F$.
\end{lemma}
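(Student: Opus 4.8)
The plan is to realise both factorizations by explicit constructions in the spirit of the folk model structure on $\Cat$, leaning on the characterisation of unitary equivalences (lemma~\ref{lemma:un_eq}) and of trivial fibrations (corollary~\ref{cor:char_tr_fib}). For (i) I would use a $\Cstar$-categorical mapping path category. Let $E$ have as objects the triples $(a,b,v)$ with $a\in\obj A$, $b\in\obj B$ and $v\colon Fa\to b$ a unitary of $B$, and let $E\big((a,b,v),(a',b',v')\big)$ be the space of pairs $(\alpha,\beta)\in A(a,a')\times B(b,b')$ with $v'\cdot F\alpha=\beta\cdot v$, with componentwise composition and involution. Define $I\colon A\to E$ by $a\mapsto(a,Fa,1_{Fa})$ and $\alpha\mapsto(\alpha,F\alpha)$, and $P\colon E\to B$ by $(a,b,v)\mapsto b$ and $(\alpha,\beta)\mapsto\beta$, so that $PI=F$. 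Then $I$ is injective on objects, hence a cofibration; it is fully faithful because the defining relation forces $\beta=F\alpha$ between the relevant objects; and the unitary morphisms $(1_a,v)\colon(a,Fa,1_{Fa})\to(a,b,v)$ show it is unitarily essentially surjective, so $I$ is a unitary equivalence by lemma~\ref{lemma:un_eq}, i.e.\ a trivial cofibration. The functor $P$ is a fibration: given $(a,b,v)\in\obj E$ and a unitary $w\colon b\to b'$ in $B$, the pair $(1_a,w)\colon(a,b,v)\to(a,b',wv)$ is a unitary of $E$ lying over $w$.

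For (ii) I would instead enlarge the object set of $B$. Put $\obj E':=\obj A\sqcup\obj B$, let $Q\colon E'\to B$ be $F$ on the first summand and the identity on the second, and set $E'(X,X'):=B(QX,QX')$ with the operations and norm of $B$; thus $E'$ is just $B$ with a relabelled, enlarged object set, hence a $\Cstar$-category. Let $J\colon A\to E'$ send $a\mapsto a$ (first summand) and act as $F$ on Hom spaces, while $Q$ is the identity on Hom spaces, so that $QJ=F$. Then $J$ is injective on objects (a cofibration), and $Q$ is fully faithful by construction and surjective on objects via the second summand, hence a trivial fibration by corollary~\ref{cor:char_tr_fib}.

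The main point needing care is that the $E$ of (i) really is a $\Cstar$-category: since distinct triples may share the same pair $(a,b)$, the category $E$ is not a subcategory of the product $A\times B$, yet the assignment $(a,b,v)\mapsto(a,b)$, $(\alpha,\beta)\mapsto(\alpha,\beta)$ is a faithful $*$-functor $E\to A\times B$ (the product exists by lemma~\ref{lemma:complete}). Pulling back the norm Hom-space by Hom-space, the $\Cstar$-identity, submultiplicativity and completeness are inherited at once, and positivity holds because each endomorphism algebra $E\big((a,b,v),(a,b,v)\big)$ is a $*$-subalgebra of $A(a,a)\times B(b,b)$, in which spectra are computed independently of the ambient algebra (example~\ref{ex:Cstarcats}(d)). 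Finally, both constructions are manifestly functorial in $F$: a commutative square comparing $F$ with $F'$, given by $g\colon A\to A_1$ and $h\colon B\to B_1$ with $hF=F_1g$, induces $(a,b,v)\mapsto(ga,hb,hv)$ on the path categories of (i) and $g\sqcup h$ on objects (with $h$ on Hom spaces) for (ii), compatibly with $I,P$ and $J,Q$; this last verification is routine and I would leave it to the reader.
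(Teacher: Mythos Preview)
Your proof is correct and follows essentially the same route as the paper: both factorizations are realised by the explicit mapping-path and mapping-cylinder constructions familiar from the folk model on~$\Cat$, and the verifications (cofibration by object-injectivity, unitary essential surjectivity via the canonical unitaries, fibration by lifting unitaries along the second projection, trivial fibration via corollary~\ref{cor:char_tr_fib}) match. The only cosmetic differences are that the paper first identifies the intermediate objects abstractly as the pullback $\Cstar(\mathbf I,B)\times_B A$ and the pushout $(A\otimes_{\max}\mathbf I)\sqcup_A B$ before giving explicit models, and that its model for~(i) takes the Hom spaces to be simply $A(x,x')$ (with $P$ sending $a\mapsto u'F(a)u^*$), which is isomorphic to your graph-style description via pairs $(\alpha,\beta)$ and spares the extra verification of the $\Cstar$-axioms that you carry out through the faithful functor into $A\times B$.
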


\begin{proof}
Our functorial factorizations will have the classical and familiar form: 
\[
\xymatrix{
A \ar[rr]^-F \ar[dr]_{I}^{\sim} && B
 \\
&
 \Cstar(\mathbf I, B) \times_B A \ar[ur]_{P} &
}
\quad \quad \quad
\xymatrix{
A \ar[rr]^-F \ar[dr]_{J} && B
 \\
&
 (A \otimes_{\max} \mathbf I) \sqcup_A B \ar[ur]^{\sim}_{Q}&
}
\]
The two midway objects are defined by the pullback, resp.\ pushout, squares
\[
\xymatrix{
\Cstar(\mathbf I, B) \times_B A \ar[d]_{\tilde F} \ar[r]^-{\tilde \ev_0} &
 A \ar[d]^-F
  \\  
\Cstar(\mathbf I, B) \ar[r]^-{\ev_0} & B
}
\quad\quad\quad
\xymatrix{
{\phantom{M}} A \otimes_{\max} \F = A \ar[r]^-F \ar[d]_{\id_A\otimes_{\max} 0} & 
B \ar[d]
\\
A \otimes_{\max} \mathbf I \ar[r] &
 (A \otimes_{\max} \mathbf I) \sqcup_A B
}
\]
(for~$\mathbf I$, see Def.~\ref{defi:interval}).
Here, the evaluation $*$-functor $\ev_0\colon \Cstar(\mathbf I,B)\to B$ sends a $*$-functor $F\colon \mathbf I\to B$ to $F0 \in \obj B$ and a morphism $\alpha\colon F\to F'$ to $\alpha_0\in B(F0,F'0)$.
\smallbreak
(i) Then we define $I=(C_B\circ F , \id_A)$ to be the $*$-functor with components $\id_A$ and 
$\smash{C_B F\colon A \stackrel{F}{\to} B \stackrel{C_B}{\to} \Cstar(\mathbf I, B)}$, where the `constant paths' $C_B$ is the $*$-functor that sends an $x\in \obj B$ to the unique $*$-functor $\mathbf I\to B$ which assigns $1_x$ to the generating unitary $0\to 1$, and sends an element $b\in B(x,x')$ to the morphism $C_Bx \to C_Bx'$ with component $b$ both at $0$ and~$1$. 
The $*$-functor $P$ is obtained by projecting to the first component and then evaluating at $1\in \obj \mathbf I$, that is, $P= \ev_1 \circ \tilde F$.

Clearly, by construction, $PI=F$ and $I$ is a cofibration. To see that $P$ is a fibration and $I$ a unitary equivalence, consider the following alternative description of $\Cstar(\mathbf I, B) \times_B A$: let $\tilde A$ be the $\Cstar$-category with 
\[
\obj \tilde A:= \{ (x,u,y) \mid x\in \obj A, y \in \obj B, u\colon Fx\simeq y \textrm{ unitary in } B  \}
\]
and with morphism spaces 
\[
\tilde A \big( (x,u,y), (x',u',y')\big) := A(x,x').
\]
This is a $\Cstar$-category with the evident operations, and it comes equipped with the two projections $\tilde \ev_0\colon\tilde A\to A$ and  $\tilde F\colon \tilde A\to \Cstar(\mathbf I,B)$ 
(the latter sending $(x,u,y)$ to the unique $*$-functor $\tilde F(x,u,y)\colon \mathbf I\to B$ which assigns the unitary $u\colon Fx \simeq y$ to the generator $0\to 1$, and sending $a\colon (x,u,y)\to (x',u',y')$ to the morphism $\tilde Fa\colon  \tilde F(x,u,y)\to \tilde F(x',u',y')$ with components $(\tilde Fa)_0:= Fa\colon  Fx\to Fx'$ at $0$ and $(\tilde Fa)_1:=u'\cdot Fa \cdot u^*\colon  y\to y'$ at~$1$). 
It is now straightforward to check that $\tilde A$, together with its two projections, satisfies the universal property of the pullback $\Cstar(\mathbf I,B)\times_B A$. 
With this picture, it is easy to see that $I\colon A\to \tilde A$ is given by $x\mapsto (x,1_x,Fx)$. Thus it is clearly fully-faithful and also unitarily essentially surjective, since for every $(x,u,y)\in \obj \tilde A$ the identity $1_x$ defines a unitary isomorphism $1_x\colon (x,u,y)\simeq (x,1_{Fx},Fx)=Ix$; therefore~$I$ is a unitary equivalence by Lemma~\ref{lemma:un_eq} (a quasi-inverse is given by $\tilde \ev_0\colon \tilde A\to A$). 

Similarly, the unitary $1_x \colon  (x,1_{Fx},Fx) \simeq (x,u',y')$ in $\tilde A$ provides a lift along
 \[
 P= \ev_1  \tilde F 
  \colon  
 \big( a\colon  (x,u,y) \to (x',u',y') \big)
 \mapsto 
 \big(  u'F(a)u^* \colon  y \to y' \big)
 \]
  of any unitary in $B$ of the form $u'\colon Fx\to y'$, showing that $P$ is a fibration.
\smallbreak
(ii) 
Similarly to part~(i), also for the second factorization it is convenient to consider a simpler model $\tilde B$ for the pushouts $(A\otimes_{\max} \mathbf I)\sqcup_A B$, defined by
\[
\obj \tilde B := \obj A \sqcup \obj B
\]
and
\begin{eqnarray*}
\tilde B(x,x'):= B(Fx,Fx)
&,&
\tilde B(y,x'):= B(y,Fx')
 , \\
\tilde B(x,y'):= B(Fx,y')
&, &
\tilde B(y,y'):= B(y,y')
\end{eqnarray*}
 for $x,x'\in \obj A$ and $y,y'\in \obj B$, with the involution and norm inherited from~$B$.
The $*$-functor $J$ is defined by $x\mapsto x$ on objects and by $F$ on arrows, and is clearly a cofibration, 
while~$Q$ maps $x\mapsto Fx$ ($x\in \obj A$) and $y \mapsto y$ ($y\in \obj B$), and is the identity on each Hom space.
Thus~$Q$ is fully faithful, and since it is evidently  surjective on objects it must be a trivial fibration (Cor.~\ref{cor:char_tr_fib}).
\end{proof}

This concludes the proof of Theorem~\ref{thm:first_model}.

\subsection{Cofibrant generation}
\label{sec:cof_gen}
To show that the unitary model is cofibrantly generated, we must find a small set of cofibrations $I$ (respectively,  of trivial cofibrations $J$) such that a $*$-functor is a trivial fibration (resp.\ a fibration) if and only if it has the right lifting property with respect to~$I$ (to~$J$). 
Moreover, we need to verify two smallness conditions for the domains of the maps in $I$ and~$J$ (see~\cite{hovey:model}*{Def.\ 2.1.17}). 

Note that the $*$-functor $0\colon \F \to \mathbf I$ is a trivial cofibration. 
Hence by Lemma~\ref{lemma:lifting_fib} we can already choose our set of generating trivial cofibrations to be $J:=\{0\colon \F \to \mathbf I \}$. 
Let us now construct the set~$I$.

Consider the unique $*$-functor $U\colon \emptyset \to \F$. 
Also, recall the universal $\Cstar$-category~$\mathbf 1$ generated by an arrow of norm one introduced in Example~\ref{ex:one_arrow}.  
Let $V\colon  \F \sqcup \F \to \mathbf 1$ be the $*$-functor sending the first copy of $\F$ to $0\in \obj \mathbf 1$ and the second one to $1 \in \obj \mathbf 1$.
Let $P$ be the pushout in $\Cstarcatun$ defined by the square 
\[
\xymatrix{
\F\sqcup \F \ar[r]^{V} \ar[d]_{V} &
\mathbf 1 \ar[d]
\\
\mathbf 1 \ar[r] & P
}
\]
and let $W\colon  P\to \mathbf 1$ be the $*$-functor with both components equal to $\id\colon  \mathbf 1\to \mathbf 1$ 
(equivalently, $P$ is the universal $\Cstar$-category generated by two parallel arrows $ 0\rightrightarrows 1 $ of norm at most one, and $W$ is determined by sending both of them to the generator $0\to 1$ of~$\mathbf 1$).

\begin{lemma} \label{lemma:liftings}
For every $*$-functor $F\colon A\to B$, the following hold true:
\begin{itemize}
\item[(i)]
$F$ has the right lifting property with respect to $U\colon \emptyset \to \F$ if and only if it is surjective on objects.

\item[(ii)]
$F$ has the right lifting property with respect to $V \colon  \F \sqcup \F \to \mathbf 1 $ if and only if it is full.

\item[(iii)]
$F$ has the right lifting property with respect to $W\colon  P \to \mathbf 1 $ if and only if it is faithful.
\end{itemize}
\end{lemma}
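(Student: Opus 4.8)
The plan is to handle the three statements uniformly by unwinding the relevant lifting square, exploiting the fact that each of the small $\Cstar$-categories in play (co)represents elementary data: by example~\ref{ex:Cstarcats}(b) a $*$-functor $\F\to A$ is the same as an object of~$A$; by example~\ref{ex:one_arrow} a $*$-functor $\mathbf 1\to A$ is the same as an arrow of~$A$ of norm at most one; a $*$-functor $\F\sqcup\F\to A$ is a pair of objects; and, by the description of the pushout~$P$, a $*$-functor $P\to A$ is a pair of \emph{parallel} arrows of norm at most one (common source, common target). Under these identifications each lifting square collapses to a concrete statement about objects and arrows of $A$ and~$B$, and I will match the resulting lifting condition with surjectivity on objects, fullness, and faithfulness respectively.

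Part~(i) is immediate: a square over $U:\emptyset\to\F$ is just an object $y\in\obj B$ (the bottom map, the top map being forced since $\emptyset$ is initial), and a lift is exactly an object $x\in\obj A$ with $Fx=y$; so the lifting property holds for all such squares iff $F$ is surjective on objects. The ``easy'' implications in (ii) and~(iii) are equally formal, and for their converses I will use a scaling argument: in~(ii), given a nonzero $c\in B(Fx,Fx')$, I form the square whose bottom arrow is $b:=c/\|c\|$, extract a norm-$\leq1$ preimage~$a$, and rescale to $\|c\|\,a$; in~(iii) the same device reduces injectivity on Hom spaces to injectivity on their unit balls.

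The substance of the lemma, and the step I expect to be the main obstacle, is the forward direction of~(ii): assuming $F$ full, I must lift a \emph{prescribed} $b\in B(Fx,Fx')$ with $\|b\|\leq1$ to an $a\in A(x,x')$ with $Fa=b$ \emph{and} $\|a\|\leq1$, whereas fullness only supplies some preimage $a_0\in A(x,x')$ with no control on its norm. To cut the norm down I will apply continuous functional calculus in the endomorphism $\Cstar$-algebra $A(x,x)$ (example~\ref{ex:Cstarcats}(a)) to the positive element $a_0^*a_0$ (axiom~(v)). Setting $a:=a_0\,g(a_0^*a_0)$ for the continuous function $g\colon[0,\infty)\to[0,1]$ with $g(t)=1$ on $[0,1]$ and $g(t)=t^{-1/2}$ on $[1,\infty)$, the identity $a^*a=a_0^*a_0\,g(a_0^*a_0)^2=h(a_0^*a_0)$ with $h(t)=t\,g(t)^2$ gives $\|a\|^2=\sup_{t\in\sigma(a_0^*a_0)}h(t)\leq1$, since $h\equiv1$ on $[1,\infty)$ and $h(t)=t\leq1$ on $[0,1]$. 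On the other hand $F$ restricts to a unital $*$-homomorphism $A(x,x)\to B(Fx,Fx)$ and hence commutes with functional calculus, so $Fa=b\,g(b^*b)$; as $\|b\|\leq1$ forces $\sigma(b^*b)\subseteq[0,1]$, where $g\equiv1$, one gets $g(b^*b)=1_{Fx}$ and therefore $Fa=b$, yielding the required norm-controlled lift.

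Finally, for part~(iii) I will observe that a square over $W:P\to\mathbf 1$ is the datum of a pair of parallel norm-$\leq1$ arrows $a_1,a_2\in A(x,x')$ together with a norm-$\leq1$ arrow $b\in B$, and that, since $W$ identifies the two generators of~$P$, commutativity says precisely $Fa_1=Fa_2\,(=b)$. A lift is a single norm-$\leq1$ arrow $a$ with $a=a_1$ and $a=a_2$, so a lift exists exactly when $a_1=a_2$. Hence the right lifting property against $W$ holds iff $F$ is injective on the unit balls of its Hom spaces, which by the scaling argument above is equivalent to faithfulness, completing the proof once (i) and~(ii) are established.
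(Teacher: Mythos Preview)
Your proof is correct and follows the same strategy the paper gestures at: unwind each lifting square using the universal properties of $\F$, $\F\sqcup\F$, $\mathbf 1$ and~$P$, and then use linearity/scaling to pass between arbitrary arrows and arrows of norm~$\leq 1$. The paper's own proof is a single sentence (``a straightforward translation of the three lifting properties, using both the universal properties \ldots\ and the linearity of~$F$''), so your write-up is considerably more detailed.

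The one place you genuinely go beyond the paper is the forward direction of~(ii). Fullness only hands you some preimage $a_0$ with no norm control, while a lift $\mathbf 1\to A$ must correspond to an arrow of norm~$\leq 1$; your functional-calculus cutoff $a=a_0\,g(a_0^*a_0)$ is exactly the right fix (this is the $\Cstar$-categorical version of the standard fact that a surjective $*$-homomorphism maps the unit ball onto the unit ball). The paper's ``linearity of~$F$'' covers the scaling arguments in the converse of~(ii) and in~(iii), but does not by itself supply this step, so your argument is the more honest one here. Note also that for the paper's actual application (Lemma~\ref{lemma:lifting_tr_fib}) only the directions RLP~$\Rightarrow$ surjective/full/faithful are used, which may explain why the author did not dwell on this point.
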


\begin{proof}
This is a straightforward translation of the three lifting properties, using both the universal properties of $\F$, $\mathbf 1$ and $P$ and the linearity of~$F$. 
\end{proof}

\begin{lemma} \label{lemma:lifting_tr_fib}
A $*$-functor is a trivial fibration if and only if it has the right lifting property with respect to the set of cofibrations 
$I:=\{U, V, W\} $.
\end{lemma}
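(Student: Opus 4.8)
The plan is to deduce this statement directly from the characterization of trivial fibrations in corollary~\ref{cor:char_tr_fib} together with the three lifting equivalences established in lemma~\ref{lemma:liftings}; no new analysis is required. Recall that, by corollary~\ref{cor:char_tr_fib}, a $*$-functor $F:A\to B$ is a trivial fibration precisely when it is fully faithful and surjective on objects. I would then simply observe that being fully faithful is the conjunction of being full and being faithful, so that $F$ is a trivial fibration if and only if it simultaneously enjoys the three properties: surjectivity on objects, fullness, and faithfulness.

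Next I would invoke the three parts of lemma~\ref{lemma:liftings}, which identify each of these three properties with the right lifting property against one of the three generators: surjectivity on objects corresponds to lifting against $U:\emptyset\to\F$, fullness to lifting against $V:\F\sqcup\F\to\mathbf 1$, and faithfulness to lifting against $W:P\to\mathbf 1$. Since having the right lifting property with respect to a \emph{set} of maps means, by definition, having it with respect to each of its members, the functor $F$ has the right lifting property with respect to $I=\{U,V,W\}$ if and only if it has all three of the individual lifting properties at once. Combining this with the reformulation of the previous paragraph immediately yields the desired equivalence, in both directions.

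There is essentially no obstacle here: the statement is a formal bookkeeping step assembling results already proved in corollary~\ref{cor:char_tr_fib} and lemma~\ref{lemma:liftings}. The only point deserving a moment's attention is that the three conditions really do decompose the property ``fully faithful and surjective on objects'' into an independent conjunction, with each conjunct matched to a distinct generator; this is transparent from the definitions, so the proof should be a single short paragraph in the final write-up.
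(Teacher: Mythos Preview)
Your proposal is correct and follows essentially the same approach as the paper, relying on corollary~\ref{cor:char_tr_fib} and lemma~\ref{lemma:liftings}. The only minor difference is in the forward direction: the paper observes that $U,V,W$ are cofibrations and invokes the already-proved lifting axiom lemma~\ref{lemma:MC4}(ii) to conclude that trivial fibrations lift against them, whereas you use the biconditionals of lemma~\ref{lemma:liftings} in both directions; either route works and the arguments are interchangeable.
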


\begin{proof}
Every trivial fibration has the right lifting property with respect to the maps in~$I$ -- which are clearly cofibrations -- by Lemma~\ref{lemma:MC4}\ (ii). Conversely, if $F\colon A\to B$ has the right lifting property with respect to $I$ \Changed{then} by Lemma~\ref{lemma:liftings} it is fully faithful and surjective on objects, and therefore it is a trivial fibration by Corollary~\ref{cor:char_tr_fib}.
\end{proof}

\begin{lemma}
\label{lemma:small}
The objects $\emptyset$, $\F$, and~$\F\sqcup\F$ are finite and~$P$ is $\aleph_1$-small in~$\Cstarcatun$, in the sense of 
\cite{hovey:model}*{Def.~2.1.3-4}.
\end{lemma}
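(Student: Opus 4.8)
The plan is to translate each smallness assertion into a statement about how the corepresentable functor $\Cstarcatun(A,-)$ interacts with colimits of $\lambda$-sequences, exploiting the description of colimits from Proposition~\ref{prop:cocomplete}: a colimit in $\Cstarcatun$ is $\colim X=\mathrm{U}(\colim VX)$, the enveloping $\Cstar$-category (completion) of the colimit of the underlying $*$-categories. First I would record the relevant corepresentables: $\Cstarcatun(\emptyset,-)$ is constant at a point, $\Cstarcatun(\F,-)=\obj(-)$ by Example~\ref{ex:Cstarcats}(b), $\Cstarcatun(\F\sqcup\F,-)=\obj(-)\times\obj(-)$, and $\Cstarcatun(P,-)$ is the set of pairs of parallel arrows of norm at most one (Example~\ref{ex:one_arrow} together with the description of $P$ preceding the lemma).

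The three \emph{finite} objects are the clean part. Since coproducts and coequalizers of $*$-categories have object-set equal to the corresponding colimit of object-sets, and the completion functor $\mathrm{U}$ leaves the object-set unchanged, the functor $\obj\colon\Cstarcatun\to\mathsf{Set}$ preserves \emph{all} colimits. Hence $\Cstarcatun(\F,-)=\obj$ carries every $\lambda$-sequence colimit to the colimit of sets, so $\F$ is finite, as is $\emptyset$ trivially. For $\F\sqcup\F$ the corepresentable is $\obj(-)^{2}$, and since finite products commute with filtered colimits in $\mathsf{Set}$ (and $\lambda$-sequences for limit ordinals $\lambda$ are filtered), $\F\sqcup\F$ is finite as well. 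Crucially, none of these three sees the completion, so no analytic input is required.

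The substance is $P$. A morphism $P\to\colim X$ is a parallel pair $(c,c')$ of arrows of norm $\le 1$ in the Hom spaces of $\colim X$, which are completions of the directed colimits of the $X_\beta(-,-)$. The endpoints of $(c,c')$ come from a single stage because $\obj$ preserves colimits, and each of $c,c'$ is a limit of a Cauchy sequence (arranged with summable consecutive differences) drawn from countably many stages. As $\lambda$ is $\aleph_1$-filtered, the supremum of these countably many indices is still $<\lambda$, so the entire Cauchy datum factors through one stage $X_{\beta_\infty}$; being a $\Cstar$-category, $X_{\beta_\infty}$ is complete, so the sequence converges there to a genuine preimage. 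A parallel argument handles injectivity, showing that two pairs becoming equal in $\colim X$ already coincide at a finite stage. This yields that $\Cstarcatun(P,-)$ commutes with $\aleph_1$-filtered $\lambda$-sequence colimits, i.e. $P$ is $\aleph_1$-small, the exponent $\aleph_1$ arising precisely because the completion is controlled by countable Cauchy sequences.

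The main obstacle is the analytic matching of norms, since the transition $*$-functors are only norm-decreasing rather than isometric. Two points must be resolved. That a Cauchy sequence downstairs lifts to a Cauchy sequence at a single stage is in fact \emph{aided} by norm-decreasingness: if the $k$-th consecutive difference is small at some stage $\gamma_k$, then at any fixed later stage $\beta_\infty\ge\gamma_k$ it is no larger, so summability is transported to $X_{\beta_\infty}$. The genuinely delicate point is that a limiting arrow of norm exactly one downstairs should admit a preimage of norm $\le 1$ upstairs; here I would use that the transition maps relevant to the small object argument — the isometric, fully faithful inclusions arising as pushouts of the generating (trivial) cofibrations — preserve norms, so that the colimit norm agrees stagewise and the constraint ``$\le 1$'' is carried over without loss. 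I would therefore run the $P$-argument relative to the subcategory of such maps, which is all the small object argument needs; reconciling this with the bare statement ``$\aleph_1$-small in $\Cstarcatun$'' is where I expect the real care to be required.
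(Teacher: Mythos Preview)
Your overall strategy --- reducing $\emptyset$, $\F$, $\F\sqcup\F$ to the fact that $\obj$ preserves colimits, and handling $P$ via countable Cauchy data together with $\aleph_1$-filteredness --- is exactly the route the paper takes. You are in fact more scrupulous than the paper on two counts: you treat injectivity of the comparison map (the paper silently omits it), and you isolate the norm constraint $\|f\|,\|f'\|\le 1$ that is needed before one can define any $*$-functor $G:P\to B_\gamma$ from a chosen preimage pair, a point the paper simply elides when it asserts the factorization through~$B_\gamma$.

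Your proposed resolution of that norm problem, however, does not go through. The generating cofibration $W:P\to\mathbf 1$ sends the two parallel generators of $P$ to the \emph{same} arrow of~$\mathbf 1$, so $W$ itself is not faithful; its pushouts (which identify a prescribed pair of parallel arrows in the target) are therefore not faithful either, and \emph{a fortiori} not isometric. Hence transition maps in $I\textrm{-cell}$ need not preserve norms, and the restriction to ``isometric, fully faithful inclusions'' is unavailable precisely for the $I$ you must work with. A fix that works for arbitrary transition maps uses functional calculus: since a $*$-functor between $\Cstar$-categories has norm-closed image on each Hom space, once \emph{any} preimage $b\in B_\beta$ of $Fa$ is found you may replace it by $f:=b\cdot h(b^*b)$ with $h(t)=\min(1,t^{-1/2})$; then $\|f^*f\|=\|h(b^*b)^2\,b^*b\|\le 1$, while $\|(Fa)^*(Fa)\|\le 1$ forces $h\big((Fa)^*(Fa)\big)=1$ and hence $f\mapsto Fa$. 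This repairs the surjectivity half of both your argument and the paper's. The injectivity half has an analogous analytic wrinkle --- an element mapping to $0$ in the completed colimit need not become $0$ at a finite stage merely because the seminorm vanishes --- which your ``parallel argument'' does not automatically handle either; that point also deserves explicit care.
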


\begin{proof}
Recall that the object-set of a colimit of $\Cstar$-categories is the colimit of the object-sets.
Thus if $A\in \{\emptyset, \F, \F\sqcup \F\}$, we see that every $*$-functor from~$A$ into a filtering colimit must factor through some  stage (in the case $A=\F\sqcup \F$, we need the colimit to be filtering to ensure that the two objects determining the $*$-functor out of~$\F\sqcup \F$ will eventually land in the same $\Cstar$-category). Thus in particular $\emptyset$,~$\F$ and~$\F\sqcup\F$ are finite objects of~$\Cstarcatun$. 
Now let us consider~$P$.  
Let $(B_0\to B_1\to \cdots B_\alpha \to B_{\alpha+1}\cdots )_{\alpha<\lambda}$ be a  sequence of $*$-functors indexed by an uncountable limit ordinal~$\lambda$, and let $F\colon  P\to \colim_{\alpha<\lambda}B_\alpha =:B$ be a $*$-functor into the colimit  of the sequence in~$\Cstarcatun$. 
Let $a,a'\colon 0\to 1$ be the two generating arrows of~$P$. 
By the construction of colimits as a completion (Prop.~\ref{prop:cocomplete}), the arrow~$Fa$ is the norm colimit in $B(F0,F1)$ of a countable sequence~$(b_n)_{n\in \N}$ with $b_n\in B_{\alpha_n}$; since $\lambda$ is an uncountable limit ordinal,~$Fa$ must be the image of some $b\in B_{\beta}$ for some $\beta < \lambda$. Similarly, there exists  a $b'\in B_{\beta'}$, for some $\beta'< \lambda$, mapping to~$Fa'\in B$. 
Moreover, reasoning with objects as above we see that~$b$ and~$b'$ must become parallel arrows at some stage $\gamma<\lambda$.
Thus there exists a $\gamma<\lambda$ and there exist arrows $f,f'\in B_\gamma(y_0,y_1)$ (for some $y_0,y_1\in \obj B_\gamma$) mapping to~$Fa$ and~$Fa'$ respectively. 
Therefore $F\colon P\to B$ factors through the unique $*$-functor $G:P\to B_\gamma$ determined by $Ga=f$ and $Ga'=f'$. This shows that $P$ is $\aleph_1$-small in~$\Cstarcatun$.
\end{proof}

By Lemma~\ref{lemma:small}, the domains of the generating maps~$I$ and~$J$ are small in~$\Cstarcatun$, thus ensuring that the (weaker) smallness condition required by the definition of cofibrantly generated model categories is satisfied.

Adding up all the lemmas, we conclude:

\begin{prop}\label{prop:cof_gen}
The unitary model structure on $\Cstarcatun$ is cofibrantly generated with the finite sets of generating cofibrations  and
trivial cofibrations
\[
I=\{ U\colon \emptyset \to \F , V\colon  \F \sqcup \F \to \mathbf 1, W \colon P\to \mathbf 1 \} 
\quad \textrm{ and } \quad
J= \{0\colon  \F \to \mathbf I \}
\]
defined above.
\qed
\end{prop}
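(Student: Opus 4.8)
The plan is to verify directly the four conditions in Hovey's definition of a cofibrantly generated model category (\cite{hovey:model}*{def.~2.1.17}), granting that the underlying model structure is already in hand by theorem~\ref{thm:first_model}. Concretely, for the proposed sets $I=\{U,V,W\}$ and $J=\{0:\F\to\mathbf I\}$ I must check: (a) the class $\Weq\cap\Fib$ of trivial fibrations coincides with the maps having the right lifting property against every member of~$I$; (b) the class $\Fib$ coincides with the maps lifting against every member of~$J$; and (c) the smallness conditions, namely that the domains of~$I$ are small relative to $I$-cell and those of~$J$ are small relative to $J$-cell. Since $I$ and $J$ are honest (in fact finite) sets, there is no set-theoretic obstruction, and the real content of (a)--(c) has already been offloaded to the preceding lemmas, so the proof becomes a matter of assembling them.

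For (a), I would simply invoke lemma~\ref{lemma:lifting_tr_fib}, which states exactly that a $*$-functor is a trivial fibration if and only if it has the right lifting property with respect to $I=\{U,V,W\}$; this rested on the ``surjective-on-objects/full/faithful'' translations of lemma~\ref{lemma:liftings} together with the characterization of trivial fibrations as the fully faithful, surjective-on-objects $*$-functors in corollary~\ref{cor:char_tr_fib}. For (b), lemma~\ref{lemma:lifting_fib} identifies the fibrations precisely with the maps lifting against the single $*$-functor $0:\F\to\mathbf I$, so taking $J=\{0\}$ discharges (b) at once (one only notes that $0$ is itself a trivial cofibration, being injective on objects and a unitary equivalence, so that $J$ genuinely consists of trivial cofibrations).

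For the smallness condition (c), I would appeal to lemma~\ref{lemma:small}: the domains $\emptyset$, $\F$ and $\F\sqcup\F$ of the maps $U$, $0$ and $V$ are finite objects of~$\Cstarcatun$, while the domain $P$ of~$W$ is $\aleph_1$-small. Being small \emph{in~$\Cstarcatun$}, i.e.\ relative to the class of all morphisms, is strictly stronger than being small relative to any particular subclass, so in particular these objects are small relative to $I$-cell and $J$-cell, which is all the small object argument needs. Assembling (a), (b) and (c) then yields the conclusion. At the level of this proposition no genuine obstacle remains; if I were instead building the whole package from scratch, the one technically delicate point would be the $\aleph_1$-smallness of~$P$ in lemma~\ref{lemma:small}, where the completion involved in forming colimits of $\Cstar$-categories (prop.~\ref{prop:cocomplete}) forces an arrow of the colimit to be merely a norm-limit of a \emph{countable} sequence of arrows from finite stages, so that one must exploit the uncountability of the indexing ordinal to pull a representative back to a single stage.
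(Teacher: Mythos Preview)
Your proposal is correct and follows exactly the paper's approach: the paper itself reduces the proposition to ``adding up all the lemmas'', namely lemma~\ref{lemma:lifting_fib} for~(b), lemma~\ref{lemma:lifting_tr_fib} for~(a), and lemma~\ref{lemma:small} for the smallness conditions~(c). Your additional remark that smallness in $\Cstarcatun$ is stronger than smallness relative to $I$-cell or $J$-cell is precisely the observation the paper makes in the sentence immediately preceding the proposition.
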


\subsection{Compatibility with the maximal tensor product}

Next we verify that the unitary model and the maximal tensor product satisfy the axioms for a symmetric monoidal category, as in~\cite{hovey:model}*{Ch.\ 4}. Since the tensor unit object $\F$ is cofibrant, we need only prove the push-out product axiom, which is precisely the content of the next proposition.

\begin{prop} \label{prop:monoidal_model}
Let $F:A\to B$ and $F':A'\to B'$ be two $*$-functors and let
\[
F \Box F' 
\quad \colon  \quad 
(B \otimes_{\max} A' ) \sqcup_{A \otimes_{\max} A'} (A \otimes_{\max} B') \longrightarrow B \otimes_{\max} B'
\]
be the $*$-functor they induce by the pushout property. The following hold:
\begin{itemize}
\item[(i)]
If $F,F'\in \Cof$, then $F\Box F' \in \Cof$.
\item[(ii)]
If moreover $F\in \Weq$ or $F' \in \Weq$, then $F\Box F'\in \Weq$. 
\end{itemize}
\end{prop}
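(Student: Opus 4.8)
The plan is to reduce both parts to the level of object sets together with one structural lemma about the tensor product, and then to argue formally inside the model structure already established in theorem~\ref{thm:first_model}.

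For part (i) I would argue entirely on object sets. Recall (as used in lemma~\ref{lemma:small}) that the object set of a colimit of $\Cstar$-categories is the colimit of the object sets, and that $\obj(A\otimes_{\max}B)=\obj A\times\obj B$. Viewing $F,F'$ as the injections they are, the pushout of object sets along $\obj A\times\obj A'\to\obj B\times\obj A'$ and $\obj A\times\obj A'\to\obj A\times\obj B'$ is the union $(\obj B\times\obj A')\cup(\obj A\times\obj B')$ inside $\obj B\times\obj B'$: the pushout of two subset inclusions is their union, the intersection being exactly $\obj A\times\obj A'$ since $F$ and $F'$ are injective. Hence the object map of $F\Box F'$ is the inclusion of this union into $\obj B\times\obj B'$, which is injective, so $F\Box F'\in\Cof$.

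The technical heart, which I would isolate as a lemma and prove first, is that for any $\Cstar$-category $C$ the endofunctor $-\otimes_{\max}C$ of $\Cstarcatun$ preserves both cofibrations and unitary equivalences. The cofibration claim is immediate, as on objects $F\otimes_{\max}\id_C$ is $\obj F\times\id$. For weak equivalences, given a unitary equivalence $F:A\to B$ with quasi-inverse $G$ and unitary natural isomorphisms $u:GF\simeq\id_A$ and $v:FG\simeq\id_B$, I would show that $G\otimes_{\max}\id_C$ is a quasi-inverse of $F\otimes_{\max}\id_C$. By functoriality of $\otimes_{\max}$ (theorem~\ref{thm:closed_monoidal_Cstarcatun}) the relevant composites are $GF\otimes_{\max}\id_C$ and $FG\otimes_{\max}\id_C$, and I would exhibit a unitary natural isomorphism $GF\otimes_{\max}\id_C\simeq\id_{A\otimes_{\max}C}$ whose component at each object $(a,c)$ is $u_a\otimes 1_c$. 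Each such component is unitary because $(u_a\otimes 1_c)^*(u_a\otimes 1_c)=u_a^*u_a\otimes 1_c=1_{(GFa,c)}$ and symmetrically; naturality holds on the dense algebraic subcategory $A\otimes_{\F}C$ directly from the naturality of $u$, and therefore on all of $A\otimes_{\max}C$ by continuity of composition. The same construction applied to $v$ completes the lemma.

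With the lemma in hand, part (ii) is formal. Since $\otimes_{\max}$ is symmetric (theorem~\ref{thm:closed_monoidal_Cstarcatun}), I may assume $F\in\Cof\cap\Weq$ and $F'\in\Cof$; part~(i) already gives $F\Box F'\in\Cof$. In the pushout square defining $F\Box F'$, the leg $F\otimes_{\max}\id_{A'}\colon A\otimes_{\max}A'\to B\otimes_{\max}A'$ is a trivial cofibration by the lemma, hence so is its cobase change $A\otimes_{\max}B'\to (B\otimes_{\max}A')\sqcup_{A\otimes_{\max}A'}(A\otimes_{\max}B')$ (cofibrations and trivial cofibrations are stable under pushout in any model category). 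On the other hand the composite $A\otimes_{\max}B'\to P\xrightarrow{F\Box F'}B\otimes_{\max}B'$ is exactly $F\otimes_{\max}\id_{B'}$, which is a unitary equivalence again by the lemma. As both $A\otimes_{\max}B'\to P$ and $A\otimes_{\max}B'\to B\otimes_{\max}B'$ are weak equivalences, the 2-out-of-3 axiom \textbf{MC2} forces $F\Box F'$ to be a weak equivalence.

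The main obstacle is the weak-equivalence half of the lemma: constructing the natural isomorphism with components $u_a\otimes 1_c$ and verifying that its naturality, checked on the algebraic tensor product, survives the maximal completion. Everything else is either a set-level computation (part~(i)) or a standard manipulation — pushout stability plus 2-out-of-3 — inside the model structure of theorem~\ref{thm:first_model}.
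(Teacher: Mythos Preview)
Your proof is correct and follows essentially the same route as the paper: both reduce (i) to the object-set computation, both isolate the observation that $-\otimes_{\max}C$ preserves cofibrations and unitary equivalences, and both deduce (ii) from the cobase-change stability of trivial cofibrations together with 2-out-of-3 applied to the factorization $F\otimes_{\max}\id_{B'}=(F\Box F')\circ G$. The only difference is that you spell out the construction of the unitary natural isomorphism $u_a\otimes 1_c$ in detail, whereas the paper simply asserts that tensoring preserves weak equivalences ``by direct inspection.''
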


\begin{proof} 
The basic observation is that tensoring with any fixed object preserves cofibrations and weak equivalences; this is immediately verified by direct inspection.

For any two $\Cstar$-categories $M,N$ we have $\obj (M\otimes_{\max} N)= \obj M \times \obj N$; similarly, the underlying sets of objects in a pushout of $\Cstarcatun$ form a pushout of sets. With this in mind, it is easy to verify that the injectivity of $\obj F$ and $\obj F'$ implies the injectivity of $\obj (F\Box F')$, thus proving~(i).
\[
\xymatrix{
A\otimes A' \ar[dd]_{F\otimes A'} \ar[rr]^-{A\otimes F'} &&  A \otimes B' \ar[dd]^{F\otimes B'} \ar[dl]_{G} \\
& (B \otimes A') \sqcup_{A\otimes A' } (A \otimes B' ) \ar@{..>}[dr]^{F\Box F'} & \\
B \otimes A' \ar[ur]^-{} \ar[rr]^-{B\otimes F'} && B \otimes B'
}
\]
Now assume that $F$ is a trivial cofibration. Then (by the first remark) $F \otimes_{\max}A' $ is also a trivial cofibration. Therefore its pushout $G$ along $A\otimes F'$ is again a trivial cofibration (\cite{hovey:model}*{Cor.\ 1.1.11}), in particular $G\in \Weq$. Since $F\otimes B'\in \Weq$ too, we conclude that $F\Box F'\in \Weq$ by the 2-out-of-3 property of weak equivalences.  
\end{proof}

\subsection{Simplicial structure}

Using both the internal Hom $\Cstar$-categories $\Cstar(A,B)$ and 
the maximal groupoid $\Cstar$-categories $\Cstarmax (\mathcal G)$ of Section~\ref{subsec:max_gpd},  we now define a simplicial structure on $\Cstarcatun$ compatible with the unitary model. 

We start by defining an adjunction between $\Cstarcatun$  and the category $\sSet$ of simplicial sets. Let $\pi\colon  \sSet \to \Cstarcatun$ be the functor assigning to a simplicial set $K$ the maximal groupoid $\Cstar$-category of its fundamental groupoid $\Pi(K)$. We recall that $\Pi(K)$ has objects
$\obj \Pi(K) := K_0$, and its arrows are generated by isomorphisms $ k\colon  d_1k \to d_0k $ (for $k \in K_1$) with relations $d_0\ell \cdot d_2\ell = d_1\ell$ (for $\ell \in K_2$). 
(See \emph{e.g.},~\cite{goerss-jardine}*{Ch.\ III.1}, where our $\Pi(K)$ is the construction denoted $GP_*(K)$.)

Let $\nu\colon  \Cstarcatun \to \sSet$ be the functor mapping a $\Cstar$-category $A$ to the simplicial nerve $N(\unitaries A)$ of its unitary groupoid.

\begin{lemma} \label{lemma:adjunction}
The functor $\pi\colon  \sSet \to \Cstarcatun $ is left adjoint to $\nu\colon  \Cstarcatun \to \sSet $.
\end{lemma}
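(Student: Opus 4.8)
The plan is to realize this adjunction as a composite of two adjunctions already at our disposal. First I would observe that both functors factor through the category of small groupoids: by construction $\pi = \Cstarmax \circ \Pi$, where $\Pi: \sSet \to \Gpd$ is the fundamental groupoid functor, while dually $\nu = N \circ \unitaries$, with $\unitaries: \Cstarcatun \to \Gpd$ the unitary-groupoid functor and $N: \Gpd \to \sSet$ the nerve.

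Next I would invoke the two relevant adjunctions. On the groupoid side, Proposition~\ref{prop:Cstar_adjoint} already provides $\Cstarmax \dashv \unitaries$. On the simplicial side, I would recall the classical adjunction $\Pi \dashv N$ between simplicial sets and groupoids (see \cite{goerss-jardine}*{Ch.\,III.1}, where $\Pi K$ is written $GP_*(K)$ and $N$ is the nerve restricted to $\Gpd$). Since a composite of left adjoints is left adjoint to the composite of the corresponding right adjoints, it follows immediately that $\pi = \Cstarmax \circ \Pi$ is left adjoint to $\nu = N \circ \unitaries$.

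Concretely, I would chain the two adjunction bijections to obtain, naturally in $K \in \sSet$ and $A \in \Cstarcatun$,
\[
\Cstarcatun(\pi K, A) = \Cstarcatun(\Cstarmax \Pi K, A) \simeq \Gpd(\Pi K, \unitaries A) \simeq \sSet(K, N(\unitaries A)) = \sSet(K, \nu A).
\]
I do not expect any genuine obstacle here: the only things to verify are the two factorizations through $\Gpd$, which are immediate from the definitions of $\pi$ and $\nu$, so that all the content is absorbed into Proposition~\ref{prop:Cstar_adjoint} and the standard fundamental-groupoid/nerve adjunction. If one preferred not to cite the latter as a black box, the mildly fiddly part would be to spell out the unit and counit of $\Pi \dashv N$ for the specific model $\Pi K = GP_*(K)$, but this is routine and is in any case exactly what \cite{goerss-jardine} records.
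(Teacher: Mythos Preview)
Your proposal is correct and matches the paper's own proof essentially verbatim: the paper also observes the factorizations $\pi = \Cstarmax \cdot \Pi$ and $\nu = N \cdot \unitaries$, and then composes the adjunction of Proposition~\ref{prop:Cstar_adjoint} with the standard $\Pi \dashv N$ adjunction from \cite{goerss-jardine}. Your version merely spells out the resulting chain of natural bijections, which the paper leaves implicit.
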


\begin{proof}
Since by definition $\pi = \Cstarmax \cdot \Pi$ and $\nu = N \cdot \unitaries$, it suffices to compose the adjunction
$\Cstarmax \colon  \Gpd \leftrightarrows \Cstarcatun : \!\unitaries$ of Proposition~\ref{prop:Cstar_adjoint} with the well-known adjunction $\Pi\colon  \sSet \leftrightarrows  \Gpd :\! N$ between the fundamental groupoid and the simplicial nerve, see~\cite{goerss-jardine}*{p.\ 154}. 
\end{proof}

\begin{prop} \label{prop:Quillen_adjunction}
The adjunction $\pi\colon  \sSet \leftrightarrows \Cstarcatun : \!\nu $ is a Quillen pair.
\end{prop}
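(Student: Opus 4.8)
The plan is to show that the left adjoint $\pi$ is a left Quillen functor, i.e.\ that it preserves cofibrations and trivial cofibrations; equivalently, since a trivial cofibration is precisely a cofibration that is also a weak equivalence, it suffices to prove the two separate facts that $\pi$ preserves \emph{all} cofibrations and that $\pi$ preserves \emph{all} weak equivalences. This is convenient because $\pi=\Cstarmax\circ\Pi$ factors through the fundamental groupoid and then the maximal groupoid $\Cstar$-category, and each of these two steps can be analysed independently.

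First I would treat cofibrations. The cofibrations of $\sSet$ are exactly the monomorphisms, while the cofibrations of $\Cstarcatun$ (Definition~\ref{defi:cof_fib}) are the $*$-functors that are injective on objects. The key observation is that the object set of $\pi(K)$ is canonically $K_0$: indeed $\obj\Pi(K)=K_0$ and $\Cstarmax$ leaves the object set unchanged (Definition~\ref{defi:max_gpd}), so on objects $\pi(f)$ is just the map $f_0\colon K_0\to L_0$ that a simplicial map $f\colon K\to L$ induces on $0$-simplices. A monomorphism of simplicial sets is injective in every degree, in particular on $0$-simplices, so $\pi$ carries every cofibration of $\sSet$ to an object-injective $*$-functor, that is, to a cofibration.

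Next I would treat weak equivalences, using that $\Pi$ is invariant under weak homotopy equivalence: a weak equivalence $f\colon K\to L$ induces a bijection on connected components and isomorphisms $\pi_1(K,x)\cong\pi_1(L,fx)$ for every vertex $x$, which is exactly the assertion that the induced functor $\Pi(f)\colon\Pi(K)\to\Pi(L)$ is essentially surjective and fully faithful, hence an equivalence of groupoids (see~\cite{goerss-jardine}). By Definition~\ref{defi:max_gpd}, $\Cstarmax$ sends equivalences of groupoids to unitary equivalences of $\Cstar$-categories. Composing, $\pi$ sends weak equivalences of $\sSet$ to unitary equivalences, i.e.\ to weak equivalences of the unitary model. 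Together with the previous paragraph this shows $\pi$ preserves trivial cofibrations, so the adjunction $\pi\dashv\nu$ of Lemma~\ref{lemma:adjunction} is a Quillen pair.

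The only point that requires genuine care is the invariance statement for $\Pi$ in the last paragraph; the rest is formal. If one prefers to avoid invoking homotopy-group invariance, the same conclusion follows from the cofibrantly generated criterion (\cite{hovey:model}*{Lemma~2.1.19}): it is enough to check that $\pi$ sends the generating cofibrations $\partial\Delta^n\hookrightarrow\Delta^n$ to cofibrations (immediate from the object-injectivity above) and the generating trivial cofibrations $\Lambda^n_k\hookrightarrow\Delta^n$ to trivial cofibrations. For the latter one computes directly that $\Pi(\Lambda^n_k)$ and $\Pi(\Delta^n)$ are both the indiscrete groupoid on their common vertex set (for $n=1$ the horn gives the one-object groupoid), so the inclusion induces an equivalence of groupoids, which $\Cstarmax$ then turns into a unitary equivalence exactly as before.
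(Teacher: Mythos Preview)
Your argument is correct. For cofibrations you do exactly what the paper does: observe that $\obj\pi(K)=K_0$, so a monomorphism of simplicial sets is sent to an object-injective $*$-functor.

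For trivial cofibrations, your primary route differs from the paper's. You show the stronger statement that $\pi$ preserves \emph{all} weak equivalences, by appealing to the fact that a weak equivalence of simplicial sets induces a bijection on $\pi_0$ and isomorphisms on all $\pi_1$, hence $\Pi(f)$ is an equivalence of groupoids, which $\Cstarmax$ then upgrades to a unitary equivalence. The paper instead invokes the cofibrantly generated criterion and checks the generating trivial cofibrations $\Lambda^k[n]\hookrightarrow\Delta[n]$ one by one, noting that $\Pi$ sends these to an \emph{isomorphism} of groupoids for $n>1$ and to the inclusion $1\hookrightarrow I$ for $n=1$, so that $\pi(\iota_{1,k})$ is literally the generating trivial cofibration $0:\F\to\mathbf I$ of $\Cstarcatun$. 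Your alternative third paragraph is essentially this same computation, stated a touch less sharply (equivalence rather than isomorphism for $n\geq 2$).

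What each approach buys: your main argument is conceptually cleaner and yields the bonus that $\pi$ preserves all weak equivalences (not just trivial cofibrations), at the cost of importing the homotopy invariance of the combinatorial fundamental groupoid from the literature. The paper's approach is entirely self-contained and elementary, and moreover identifies $\pi(\iota_{1,k})$ explicitly with the chosen generating trivial cofibration of the unitary model, which is a pleasant confirmation that the two model structures mesh.
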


\begin{proof}
It suffices to verify that $\pi$ preserves cofibrations and that it sends the generating trivial cofibrations of $\sSet$ to trivial cofibrations. If $f\colon K\to L$ is a cofibration (\emph{i.e.}, a dimensionwise injective map), then in particular it is injective in dimension zero and therefore $\pi(f)$ is injective on objects, \emph{i.e.}, it is a cofibration of~$\Cstarcatun$.  

Now let $\iota_{n,k} \colon  \Lambda^k[n] \to \Delta[n]$  (for $n\geq 1, 0\leq k\leq n$) be a generating trivial cofibration of $\sSet$. 
Precisely as in~\cite{rezk:folk}*{Thm.\ 6.1}, one can verify that, for $n> 1$, the functor $\Pi(\iota_{n,k})$ is an isomorphism of fundamental groupoids, and that $\Pi(\iota_{1,k})$ is the inclusion $1 \to I$ of the trivial group as one end of the interval groupoid (Def.~\ref{defi:interval}). 
Hence each $\pi (\iota_{n,k})= \Cstarmax \Pi(\iota_{n,k})$ is an isomorphism of $\Cstar$-categories for  $n>1$, and $\pi(\iota_{1,k})=(0\colon \F \to \mathbf I)$ is the generating trivial cofibration of $\Cstarcatun$. 
In particular, these are all  trivial cofibrations as required.
\end{proof}

\begin{definition}[Simplicial structure]
\label{defi:simpl_struct}
Define three functors
\begin{eqnarray*}
A, K \mapsto A \otimes K &\colon & \Cstarcatun \times \sSet \to \Cstarcatun \\
A, K \mapsto A^K  &\colon &  \Cstarcatun \times \sSet^\op \to \Cstarcatun \\
A,B \mapsto \mathrm{Map}(A,B) &\colon & \Cstarcatun^\op \times \Cstarcatun \to \sSet
\end{eqnarray*}
by setting
\[
A\otimes K := A\otimes_{\max} \pi K
\quad,\quad
A^K := \Cstar(\pi K, A)
\quad\textrm{ and }\quad
\mathrm{Map}(A,B):= \nu \Cstar(A,B).
\]
\end{definition}

\begin{thm} \label{thm:simpl_struct}
The operations of Definition~\ref{defi:simpl_struct} and the unitary model structure of Theorem~\ref{thm:first_model} make $\Cstarcatun$ into a simplicial model category.
\end{thm}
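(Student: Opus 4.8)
The plan is to establish the two ingredients required by \cite{hovey:model} for a simplicial model category: a genuine $\sSet$-enrichment of $\Cstarcatun$ (tensored and cotensored, with coherent natural isomorphisms of simplicial sets) carried by the functors of definition~\ref{defi:simpl_struct}, and the compatibility of this structure with the unitary model structure in the form of Quillen's pushout--product axiom SM7. Everything will follow by assembling three facts already in hand: the closed symmetric monoidal structure of theorem~\ref{thm:closed_monoidal_Cstarcatun}, the adjunction $\pi\dashv\nu$ of lemma~\ref{lemma:adjunction} with its Quillen property (proposition~\ref{prop:Quillen_adjunction}), and the strong symmetric monoidality of $\pi$ coming from theorem~\ref{thm:max_products}.

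First I would record the defining adjunction bijections. Chaining the exponential law of theorem~\ref{thm:closed_monoidal_Cstarcatun} with lemma~\ref{lemma:adjunction} gives, naturally in $A,B\in\Cstarcatun$ and $K\in\sSet$,
\[
\Cstarcatun(A\otimes K,B)
=\Cstarcatun(A\otimes_{\max}\pi K,B)
\simeq\Cstarcatun(A,\Cstar(\pi K,B))
=\Cstarcatun(A,B^{K}),
\]
while the symmetry of $\otimes_{\max}$ together with $\pi\dashv\nu$ yields
\[
\Cstarcatun(A\otimes K,B)
\simeq\Cstarcatun(\pi K,\Cstar(A,B))
\simeq\sSet(K,\nu\Cstar(A,B))
=\sSet(K,\mathrm{Map}(A,B)).
\]
To promote these hom-set bijections to the full enriched structure --- in particular the associativity $A\otimes(K\times L)\simeq(A\otimes K)\otimes L$ and the function-complex isomorphism $\mathrm{Map}(A\otimes K,B)\simeq\mathrm{Map}(A,B)^{K}$ --- I would use that $\pi=\Cstarmax\cdot\Pi$ is a strong symmetric monoidal left adjoint. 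Here $\Pi$ preserves finite products (a standard property of the fundamental groupoid), and $\Cstarmax$ converts products into maximal tensor products by theorem~\ref{thm:max_products}, so that $\pi(K\times L)\simeq\pi K\otimes_{\max}\pi L$ and $\pi(\ast)\simeq\F$. All the coherence isomorphisms of the $\sSet$-action are then transported from those of $\otimes_{\max}$, their commutativity being forced by the uniqueness clauses in the universal properties of $\pi$ and $\otimes_{\max}$; this is exactly what makes $\Cstarcatun$ a symmetric $\sSet$-algebra in the sense of \cite{hovey:model}*{ch.\,4}.

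For the axiom SM7 I would reduce the simplicial pushout--product to the monoidal one already treated in proposition~\ref{prop:monoidal_model}. Given a cofibration $F:A\to B$ in $\Cstarcatun$ and a cofibration $j:K\to L$ in $\sSet$, both $A\otimes_{\max}(-)$ and $\pi$ preserve pushouts (being left adjoints), so the simplicial pushout--product map
\[
(B\otimes K)\sqcup_{A\otimes K}(A\otimes L)\longrightarrow B\otimes L
\]
coincides with the monoidal pushout--product $F\,\Box\,\pi(j)$ of proposition~\ref{prop:monoidal_model}. Since $\pi$ is left Quillen (proposition~\ref{prop:Quillen_adjunction}), $\pi(j)$ is a cofibration, and it is a trivial cofibration when $j$ is. Hence part~(i) of proposition~\ref{prop:monoidal_model} shows $F\,\Box\,\pi(j)\in\Cof$, and part~(ii) shows $F\,\Box\,\pi(j)\in\Weq$ whenever $F\in\Weq$ or $\pi(j)\in\Weq$, i.e.\ whenever $F$ or $j$ is a trivial cofibration. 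This is precisely SM7.

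I expect the real work to lie not in SM7 --- which is nearly formal once propositions~\ref{prop:monoidal_model} and~\ref{prop:Quillen_adjunction} are available --- but in the bookkeeping of the enrichment: upgrading the three hom-set adjunctions to isomorphisms of simplicial sets and checking the associativity, unit, and composition coherences of the $\sSet$-action. This is the step where the strong monoidality of theorem~\ref{thm:max_products} is indispensable, since without $\pi(K\times L)\simeq\pi K\otimes_{\max}\pi L$ the cartesian structure of $\sSet$ would fail to match $\otimes_{\max}$; the care needed is to confirm that the transported isomorphisms really do satisfy the enriched-category axioms rather than merely holding objectwise.
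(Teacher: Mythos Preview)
Your proposal is correct and follows essentially the same approach as the paper: the paper's proof is a terse two-sentence version of exactly what you wrote, citing the adjunctions already constructed for the enriched isomorphisms and invoking propositions~\ref{prop:Quillen_adjunction} and~\ref{prop:monoidal_model} for the pushout--product axiom. Your expansion of the enrichment step via the strong monoidality of $\pi$ (theorem~\ref{thm:max_products}) is precisely the content the paper packages into the subsequent theorem~\ref{thm:algebra}, which it then remarks formally implies theorem~\ref{thm:simpl_struct}.
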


\begin{proof}
The required natural isomorphisms
\[
\Cstarcatun (A \otimes K , B) \simeq
 \Cstarcatun (A , B^K) \simeq 
 \sSet (K, \mathrm{Map} (A,B))
\]
are obtained by combining the various adjunctions we have constructed so far.
The required axiom involving the push-out product follows from Proposition~\ref{prop:Quillen_adjunction} and Proposition~\ref{prop:monoidal_model}.
\end{proof}

The latter theorem is also a formal consequence of the next one.

\begin{thm} \label{thm:algebra}
The functor $\pi\colon \sSet \to \Cstarcatun$ is a simplicial symmetric monoidal left Quillen functor. Thus, in the language of~\cite{hovey:model}, the unitary model category $\Cstarcatun$ is a symmetric $\sSet$-algebra.
\end{thm}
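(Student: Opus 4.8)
The plan is to reduce the whole statement to the single assertion that $\pi$ is a \emph{strong} symmetric monoidal functor, and then read off the conclusion from Hovey's definitions. Once we know that $\pi$ is a symmetric monoidal left Quillen functor out of the symmetric monoidal model category $(\sSet,\times,\ast)$ and into $(\Cstarcatun,\otimes_{\max},\F)$, then by \cite{hovey:model}*{\S\,4.2} the target is, by definition, a symmetric $\sSet$-algebra, with induced action $A\otimes K = A\otimes_{\max}\pi K$, cotensor $A^K=\Cstar(\pi K,A)$ and mapping space $\nu\Cstar(A,B)$ --- precisely the simplicial structure of Definition~\ref{defi:simpl_struct}. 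Thus Theorem~\ref{thm:simpl_struct} becomes a formal consequence, as announced.

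Two of the three ingredients are already in hand: the left Quillen property of $\pi$ is Proposition~\ref{prop:Quillen_adjunction}, and $\Cstarcatun$ is a symmetric monoidal model category by Theorem~\ref{thm:closed_monoidal_Cstarcatun} and Proposition~\ref{prop:monoidal_model} (the unit axiom being vacuous since the tensor unit $\F$ is cofibrant by Remark~\ref{rem:fibrant_cofibrant}). So all that remains is to equip $\pi$ with a strong symmetric monoidal structure. Here I would use the factorization $\pi=\Cstarmax\circ\Pi$ from Lemma~\ref{lemma:adjunction} and present $\pi$ as a composite of strong symmetric monoidal functors. Theorem~\ref{thm:max_products} already supplies that $\Cstarmax:(\Gpd,\times,1)\to(\Cstarcatun,\otimes_{\max},\F)$ is strong symmetric monoidal. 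For the fundamental groupoid I would invoke the standard fact that $\Pi:(\sSet,\times,\ast)\to(\Gpd,\times,1)$ preserves finite products, i.e. $\Pi(K\times L)\simeq \Pi K\times\Pi L$ naturally and $\Pi(\ast)\simeq 1$ (see \cite{goerss-jardine}, or Gabriel--Zisman). Composing,
\[
\pi(K \times L) = \Cstarmax\,\Pi(K \times L) \simeq \Cstarmax(\Pi K \times \Pi L) \simeq \Cstarmax\,\Pi K \otimes_{\max} \Cstarmax\,\Pi L = \pi K \otimes_{\max} \pi L,
\]
and $\pi(\ast)=\Cstarmax(1)\simeq\F$. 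A composite of strong symmetric monoidal functors is again strong symmetric monoidal, so the associativity, unit and symmetry coherence diagrams for $\pi$ are inherited from those for $\Pi$ and for $\Cstarmax$; the unit clause of being a \emph{monoidal Quillen} functor holds automatically since $\pi$ carries the cofibrant unit $\ast$ isomorphically to the cofibrant unit $\F$.

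The main obstacle is the one genuinely homotopy-theoretic input, that $\Pi$ commutes with finite products. This does not follow from formal adjunction nonsense, since $\Pi$ is a left adjoint and left adjoints need not preserve products; it rests instead on the explicit presentation of $\Pi K$ by generators $K_1$ and relations coming from $K_2$, together with an Eilenberg--Zilber-type interchange argument on $K\times L$ --- in effect the classical computation that the fundamental groupoid of a product is the product of the fundamental groupoids. Granting this, the remaining points --- that the coherence isomorphisms of the composite verify Hovey's axioms, and that the module structure they determine recovers Definition~\ref{defi:simpl_struct} --- are routine, following from the uniqueness clauses in the universal properties defining $\Cstarmax$ and $\otimes_{\max}$ (exactly as in the proof of Theorem~\ref{thm:max_products}).
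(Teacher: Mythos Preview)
Your argument is correct and follows essentially the same route as the paper: factor $\pi=\Cstarmax\circ\Pi$, invoke the standard fact that $\Pi$ preserves finite products, combine with Theorem~\ref{thm:max_products} to get strong symmetric monoidality of~$\pi$, and cite Proposition~\ref{prop:Quillen_adjunction} for the left Quillen property. Your write-up is more explicit about the coherence and unit conditions and about why the product-preservation of $\Pi$ is the one nonformal input, but the underlying proof is the same.
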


\begin{proof}
The functor $\Pi\colon \sSet \to \Gpd$ commutes with finite products, thus defining a symmetric monoidal functor $(\sSet, \times , \pt)\to (\Gpd, \times, 1)$; by Theorem~\ref{thm:max_products}, $\Cstarmax$ is also symmetric monoidal, and therefore so is their composition~$\pi$. We have seen (Prop.~\ref{prop:Quillen_adjunction}) that $\pi$ is a left Quillen functor, and by Definition~\ref{defi:simpl_struct} it transports the simplicial structure of $\sSet$ to that of~$\Cstarcatun$.  
\end{proof}

\subsection{A stroll in the Quillen neighborhood}

There is a commutative tetrahedron of model categories where every edge is a Quillen adjoint pair:
\begin{equation*}
\xymatrix{
& \sSet 
 \ar[dl]_-{(\Pi, N\, iso)} 
  \ar[dd]|-<<<<<{(\Pi, N)}|{\phantom{M}} 
    \ar[dr]^-{(\pi, \nu)}
 & \\
\Cat
 \ar[rr]
  &
   & \Cstarcatun  \\
& \Gpd
 \ar[ul]^-{ ( inc, iso )} 
  \ar[ur]_-{(\Cstarmax, \unitaries)}
 &
}
\end{equation*}
(for each pair $(L,R)$, the arrow shows the direction of the left adjoint~$L$).

The triangle on the left hand side is classical and comprises the usual model category of simplicial sets, together with the so-called \emph{canonical}, or \emph{folk}, models on categories and groupoids. For both $\Cat$ and~$\Gpd$, the weak equivalences are the equivalences in the usual sense of category theory, the cofibrations are the functors that are injective on objects, and the fibrations are the functors $F\colon C\to D$ enjoying the lifting property for isomorphisms: if $u\colon Fx \simeq y$ is an isomorphism in~$D$, then there exists an isomorphism $v$ in $C$ such that $Fv=u$ (\emph{cf.}~\cite{rezk:folk} and~\cite{anderson}*{$\S$5}).

It should now be obvious that the unitary model for $\Cstarcatun$ has been adapted from the latter ones.
All labeled Quillen pairs on the diagram should be clear. Just recall that $\Pi$ denotes the fundamental groupoid, left adjoint to the simplicial nerve~$N$. With $inc$ we have denoted the inclusion  of groupoids into categories, whose right adjoint $iso$ maps a category to its subcategory of isomorphisms.
Observe that all model categories are symmetric monoidal ($\Cstarcatun$ for~$\otimes_{\max}$, the other three for the categorical product), and that all left Quillen functors are symmetric monoidal.

We now explain the remaining, still unlabeled, Quillen pair $\Cat \leftrightarrows \Cstarcatun$, which provides a direct connection between categories and $\Cstar$-categories.
\begin{definition}[$\ism$ and $\Cstarism$]
\label{defi:ism}
Recall that an arrow $a\in A$ in a $\Cstar$-category is an \emph{isometry} if $a^*a=1$. The isometries in $A$ form a subcategory $\ism A$, and since $*$-functors preserve isometries we have an induced functor $\ism\colon  \Cstarcatun\to \Cat$. 

In the other direction, for any small category~$C$ let $\Cstarism ( C)$ be the universal $\Cstar$-category realizing all arrows of $C$ as isometries; 
in other words, $\Cstarism ( C)$ is the universal $\Cstar$-category for the underlying quiver of~$C$, with relations the composition in $C$ together with $\{c^*c=1_x\mid c\in C(x,y), x,y\in \obj C\}$ (this is easily seen to be admissible as in Section~\ref{sec:univ} because, like unitaries,  isometries have norm one or zero). 
Let $\Cstarism\colon  \Cat \to \Cstarcatun$ denote the resulting functor.
\end{definition}

\begin{prop}
\label{prop:ism}
The two functors in  Definition~\ref{defi:ism} form a Quillen adjoint pair, $\Cstarism\colon \Cat \leftrightarrows \Cstarcatun : \!\ism$, making the above tetrahedron commute \textup(to have strict commutativity, we should really redefine $\Cstarmax:= \Cstarism \cdot  inc$\textup). 
Moreover, $\Cstarism$ is a symmetric monoidal functor $(\Cat,\times,1)\to (\Cstarcatun, \otimes_{\max},\F)$.
\end{prop}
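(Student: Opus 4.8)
The plan is to dispatch the three claims in turn, each essentially a faithful transcription of the corresponding fact for the pair $(\Cstarmax,\unitaries)$. The adjunction $\Cstarism \dashv \ism$ is immediate from the universal property in Definition~\ref{defi:ism}: a $*$-functor $\Cstarism(C)\to A$ is precisely a representation of the underlying quiver of $C$ obeying the composition relations of $C$ together with the relations $c^*c=1$, that is, a functor $C\to A$ sending every arrow to an isometry, i.e.\ a functor $C\to\ism A$; the resulting bijection $\Cstarcatun(\Cstarism C,A)\simeq\Cat(C,\ism A)$ is visibly natural in $C$ and $A$. For the Quillen property I would check that the right adjoint $\ism$ preserves fibrations and trivial fibrations. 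The one fact needed is that an invertible isometry is automatically unitary (if $a^*a=1$ and $a$ is invertible then $a^*=a^{-1}$, whence $aa^*=1$), so that the isomorphisms of $\ism A$ are exactly the unitaries of $A$. Thus a fibration $F$, which lifts unitaries (Definition~\ref{defi:cof_fib}), is carried by $\ism$ to a functor lifting isomorphisms, i.e.\ a fibration of the folk model on $\Cat$; and a trivial fibration $F$, characterized by Corollary~\ref{cor:char_tr_fib} as fully faithful and surjective on objects, is carried to $\ism F$, which has the same surjective object-map and, since a fully faithful $*$-functor both preserves isometries and (being faithful) reflects them, is again fully faithful---hence a trivial fibration in $\Cat$. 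Equivalently and just as fast, the left adjoint sends the generating trivial cofibration $1\to I$ of $\Cat$ to $0\colon\F\to\mathbf I$ of Proposition~\ref{prop:cof_gen}.

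The commutativity of the tetrahedron rests on the single identification $iso\circ\ism=\unitaries$, which is the same observation: the isomorphisms of $\ism A$ are the invertible isometries, namely the unitaries. Dually this reads $\Cstarism\circ inc=\Cstarmax$ (in a groupoid the presence of an inverse forces an isometry to be unitary), which is exactly the redefinition flagged in the statement. Consequently the two faces through the new edge commute on right adjoints, $iso\circ\ism=\unitaries$ and $N\,iso\circ\ism=N\,\unitaries=\nu$, and therefore also on left adjoints by uniqueness of adjoints; the remaining faces are the ones already established.

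For strong monoidality, $\Cstarism(1)=\F$ recovers the unit. For the tensor I would first record the analogue of Lemma~\ref{lemma:UP_max},
\[
\ism\Cstar(\Cstarism D,A)\simeq\underline{\mathrm{Hom}}(D,\ism A),
\]
proved by the identical argument: the objects agree by the adjunction, while an isometric (hence automatically bounded) natural transformation is the same as a natural transformation with isometric components, naturality being imposed only on the generating arrows of $D$ and then spread to all of $\Cstarism D$ by $\F$-linearity and norm-continuity. Feeding this into the chain
\begin{align*}
\Cstarcatun(\Cstarism(C\times D),A)
&\simeq\Cat(C\times D,\ism A)\\
&\simeq\Cat(C,\underline{\mathrm{Hom}}(D,\ism A))\\
&\simeq\Cat(C,\ism\Cstar(\Cstarism D,A))\\
&\simeq\Cstarcatun(\Cstarism C,\Cstar(\Cstarism D,A))\\
&\simeq\Cstarcatun(\Cstarism C\otimes_{\max}\Cstarism D,A),
\end{align*}
natural in $A$ and using the cartesian closedness of $\Cat$ together with Theorem~\ref{thm:closed_monoidal_Cstarcatun}, the Yoneda lemma produces the structural isomorphism $\Cstarism(C\times D)\simeq\Cstarism C\otimes_{\max}\Cstarism D$, exactly as in Lemma~\ref{lemma:max_products} and Theorem~\ref{thm:max_products}; the coherence diagrams then commute by uniqueness of the maps determined by the universal properties of $\Cstarism$ and $\otimes_{\max}$. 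I expect this monoidal part---the internal hom compatibility and the coherence check---to be the only step demanding real care, the adjunction, the Quillen property and the commutativity being short and essentially formal.
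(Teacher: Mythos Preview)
Your proposal is correct and follows exactly the approach the paper indicates: recycle the proofs for $\Cstarmax$ from section~\ref{sec:univ}, with the analog of Lemma~\ref{lemma:UP_max} (your displayed isomorphism $\ism\Cstar(\Cstarism D,A)\simeq\underline{\mathrm{Hom}}(D,\ism A)$) being the one place needing the small adjustment you identify, namely replacing ``unitary'' by ``isometric'' and observing that invertible isometries are unitary. You have simply spelled out what the paper leaves to the reader; the only mild deviation is that you verify the Quillen property via the right adjoint preserving (trivial) fibrations rather than via the left adjoint on generating (trivial) cofibrations, but this is an equivalent and equally short route.
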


\begin{proof}
It is possible to recycle almost \emph{verbatim} the proofs of the analogous facts for~$\Cstarmax$ in Section~\ref{sec:univ}; only the proof of the analog of Lemma~\ref{lemma:UP_max} requires a little (straightforward) adjustment. 
We leave it to the interested reader. 
The commutativity of the tetrahedron is immediate from the definitions.
\end{proof}

\begin{bibdiv}
\begin{biblist}

\bib{anderson}{article}{
   author={Anderson, D.\ W.},
   title={Fibrations and geometric realizations},
   journal={Bull.\ Amer.\ Math.\ Soc.},
   volume={84},
   date={1978},
   number={5},
   pages={765--788},
}

\bib{blackadar:Kth_op_alg}{book}{
   author={Blackadar, Bruce},
   title={$K$-theory for operator algebras},
   series={Mathematical Sciences Research Institute Publications},
   volume={5},
   edition={2},
   publisher={Cambridge University Press},
   place={Cambridge},
   date={1998},
   pages={xx+300},
}

\bib{black1}{book}{
   author={Blackadar, B.},
   title={Operator algebras},
   series={Encyclopaedia of Mathematical Sciences},
   volume={122},
   note={Theory of $C^*$-algebras and von Neumann algebras;
   Operator Algebras and Non-commutative Geometry, III},
   publisher={Springer-Verlag},
   place={Berlin},
   date={2006},
   pages={xx+517},
}

\bib{doplicher-roberts:new_duality}{article}{
   author={Doplicher, Sergio},
   author={Roberts, John E.},
   title={A new duality theory for compact groups},
   journal={Invent.\ Math.},
   volume={98},
   date={1989},
   number={1},
   pages={157--218},
}

\bib{dwyer-spalinski}{article}{
   author={Dwyer, W.\ G.},
   author={Spali{\'n}ski, J.},
   title={Homotopy theories and model categories},
   conference={
      title={Handbook of algebraic topology},
   },
   book={
      publisher={North-Holland},
      place={Amsterdam},
   },
   date={1995},
   pages={73--126},
}

\bib{glr}{article}{
   author={Ghez, P.},
   author={Lima, R.},
   author={Roberts, J.\ E.},
   title={$W^\ast$-categories},
   journal={Pacific J.\ Math.},
   volume={120},
   date={1985},
   number={1},
   pages={79--109},
}

\bib{goerss-jardine}{book}{
   author={Goerss, Paul G.},
   author={Jardine, John F.},
   title={Simplicial homotopy theory},
   series={Progress in Mathematics},
   volume={174},
   publisher={Birkh\"auser Verlag},
   place={Basel},
   date={1999},
   pages={xvi+510},
}

\bib{hovey:model}{book}{
   author={Hovey, Mark},
   title={Model categories},
   series={Mathematical Surveys and Monographs},
   volume={63},
   publisher={American Mathematical Society},
   place={Providence, RI},
   date={1999},
   pages={xii+209},
}

\bib{joachim-johnson}{article}{
   author={Joachim, Michael},
   author={Johnson, Mark W.},
   title={Realizing Kasparov's $KK$-theory groups as the homotopy classes of
   maps of a Quillen model category},
   conference={
      title={An alpine anthology of homotopy theory},
   },
   book={
      series={Contemp. Math.},
      volume={399},
      publisher={Amer.\ Math.\ Soc.},
      place={Providence, RI},
   },
   date={2006},
   pages={163--197},
}

\bib{kandelaki:KK_K}{article}{
   author={Kandelaki, Tamaz},
   title={$KK$-theory as the $K$-theory of $C^*$-categories},
   journal={Homology Homotopy Appl.},
   volume={2},
   date={2000},
   pages={127--145},
}

\bib{kandelaki:multiplier}{article}{
   author={Kandelaki, T.},
   title={Multiplier and Hilbert $C^*$-categories},
   language={English, with English and Georgian summaries},
   journal={Proc.\ A.\ Razmadze Math.\ Inst.},
   volume={127},
   date={2001},
   pages={89--111},
}

\bib{keller_dg}{article}{
   author={Keller, Bernhard},
   title={On differential graded categories},
   conference={
      title={International Congress of Mathematicians.\ Vol.\ II},
   },
   book={
      publisher={Eur.\ Math.\ Soc., Z\"urich},
   },
   date={2006},
   pages={151--190},
}

\bib{maclane}{book}{
   author={Mac Lane, Saunders},
   title={Categories for the working mathematician},
   series={Graduate Texts in Mathematics},
   volume={5},
   edition={2},
   publisher={Springer-Verlag},
   place={New York},
   date={1998},
   pages={xii+314},
}

\bib{micic}{article}{
   author={Micic, Srdjan J.},
   title={Karoubi conjecture, K-theory of Banach categories and applications},
   journal={PhD thesis at ETH Zurich},
   volume={},
   date={2006},
   number={16940},
   pages={},
   issn={},
   review={},
}

\bib{mitchener:Cstar_cats}{article}{
   author={Mitchener, Paul D.},
   title={$C^*$-categories},
   journal={Proc.\ London Math.\ Soc.\ (3)},
   volume={84},
   date={2002},
   number={2},
   pages={375--404},
}

\bib{phillips}{article}{
   author={Phillips, N.\ Christopher},
   title={Inverse limits of $C^*$-algebras and applications},
   conference={
      title={Operator algebras and applications, Vol.\ 1},
   },
   book={
      series={London Math.\ Soc.\ Lecture Note Ser.},
      volume={135},
      publisher={Cambridge Univ.\ Press},
      place={Cambridge},
   },
   date={1988},
   pages={127--185},
}

\bib{rezk:folk}{article}{
   author={Rezk, Charles},
   title={A model category for categories},
   conference={
      title={unpublished},
   },
   book={
      series={},
      volume={},
      publisher={},
      place={},
   },
   eprint={http://www.math.uiuc.edu/~rezk/papers.html},
   date={2000},
   pages={},
   review={},
}

\bib{vasselli:bundles}{article}{
   author={Vasselli, Ezio},
   title={Bundles of $C^\ast$-categories},
   journal={J.\ Funct.\ Anal.},
   volume={247},
   date={2007},
   number={2},
   pages={351--377},
}

\bib{vasselli:bundlesII}{article}{
   author={Vasselli, Ezio},
   title={Bundles of $C^*$-categories. II. $C^*$-dynamical systems and
   Dixmier-Douady invariants},
   journal={J.\ Funct.\ Anal.},
   volume={257},
   date={2009},
   number={2},
   pages={357--387},
}

\end{biblist}
\end{bibdiv}

\end{document}